\documentclass{amsart}

\usepackage{amssymb}
 
\theoremstyle{plain} 
\newtheorem{theorem}{Theorem}[section] 
\newtheorem{lemma}[theorem]{Lemma} 
\newtheorem{corollary}[theorem]{Corollary} 
\newtheorem{proposition}[theorem]{Proposition} 
\newtheorem{fact}[theorem]{Fact} 

\theoremstyle{definition} 
\newtheorem{definition}[theorem]{Definition} 
\newtheorem{example}[theorem]{Example} 
\newtheorem{problem}{Problem} 
 
\theoremstyle{remark} 
\newtheorem*{remark}{Remark}

\newcommand{\R}{\mathbb{R}} 
 
\newcommand{\I}{\mathcal{I}} 
\newcommand{\J}{\mathcal{J}}
\newcommand{\K}{\mathcal{K}} 
  
\newcommand{\fin}{\textrm{Fin}} 
\DeclareMathOperator{\wlasnosc}{W}
\renewcommand{\subset}{\subseteq}
\renewcommand{\supset}{\supseteq}

\numberwithin{equation}{section}

\makeatletter
\@namedef{subjclassname@2010}{%
  \textup{2010} Mathematics Subject Classification}
\makeatother

\frenchspacing

\textwidth=13.5cm
\textheight=23cm
\parindent=16pt
\oddsidemargin=-0.5cm
\evensidemargin=-0.5cm
\topmargin=-0.5cm

\begin{document}
\baselineskip=17pt

\title{Ideal equal Baire classes}

\author{Adam Kwela}
\address{Institute of Mathematics, University of Gda\'{n}sk, ul.~Wita Stwosza 57, 80-952 Gda\'{n}sk, Poland}
\email{adam.kwela@ug.edu.pl}

\author{Marcin Staniszewski}
\address{Institute of Mathematics, University of Gda\'{n}sk, ul.~Wita Stwosza 57, 80-952 Gda\'{n}sk, Poland}
\email{marcin.staniszewski@mat.ug.edu.pl}

\date{\today}

\subjclass[2010]{Primary:
40A35. 
Secondary:
40A30, 
26A03, 
54A20. 
}

\keywords{
Ideal, ideal convergence, equal convergence, quasi-normal convergence, quasi-continuous functions, equal Baire classes, ideal equal Baire classes, $\omega$-+-diagonalizable ideals, weakly Ramsey ideals.}

\begin{abstract}
For any Borel ideal we characterize ideal equal Baire system generated by the families of continuous and quasi-continuous functions, i.e., the families of ideal equal limits of sequences of continuous and quasi-continuous functions. 
\end{abstract}

\maketitle

\section{Introduction}

Laczkovich and Rec{\l}aw (see \cite{LR}) and (independently) Debs and Saint Raymond (see \cite{debs-ray}) characterized first Baire class with respect to ideal convergence (the family of pointwise ideal limits of sequences of continuous functions) for every Borel ideal and Polish space. In particular, they characterized Borel ideals for which the first Baire class with respect to ideal convergence is equal to the classical first Baire class. Filip\'{o}w and Szuca (see \cite{filipow-szuca-I-Baire}) have extended this result to ideal discrete convergence and $\left(\I,\fin\right)$-equal convergence. Moreover, they characterized the ideals for which higher Baire classes in the case of all three considered notions of convergence (ideal, ideal discrete and $\left(\I,\fin\right)$-equal convergence) coincide with the classical Baire classes for all perfectly normal topological spaces. In this paper we generalize their results to $\left(\I,\J\right)$-equal convergence. We characterize Baire classes in the case of $\left(\I,\J\right)$-equal convergence for every pair of ideals $\left(\I,\J\right)$, where $\I$ is coanalytic (Theorem \ref{b}).

Recently, Natkaniec and Szuca (see \cite{nat-szuca} and \cite{nat-szuca2}) obtained similar results in the case of quasi-continuous functions instead of continuous functions. Namely, they characterized Baire systems generated by the family of quasi-continuous functions in the case of ideal convergence and ideal discrete convergence for all Borel ideals and metric Baire spaces. In this paper we characterize Baire systems generated by quasi-continuous functions in the case of $\left(\I,\J\right)$-equal convergence for every pair of ideals $\left(\I,\J\right)$, where $\I$ is Borel (Theorem \ref{Borel1}).

One can look at our results from two different points of view. The mentioned characterizations are strictly combinatorial and do not involve any topological notions. Therefore, in some sense we use real analysis to classify pairs of ideals -- we introduce three different q-types and three different c-types of pairs of ideals. The Baire systems generated by continuous (quasi-continuous) functions with respect to ideal equal convergence are the same for all pairs of ideals of the same c-type (q-type). On the other hand, our investigations can be interesting from the point of view of real analysis. All earlier results from this area (cf. \cite{debs-ray}, \cite{filipow-szuca-I-Baire}, \cite{LR}, \cite{nat-szuca} and \cite{nat-szuca2}) have a similar structure: they state that for any $\alpha<\omega_1$ and a Borel ideal the Baire class $\alpha$ (generated by continuous or quasi-continuous functions) with respect to some notion of ideal convergence is equal to one of the Baire classes (generated by the same family of functions) with respect to classical (i.e., not involving ideals) counterpart of the same notion of convergence. We show that the Baire system (generated by continuous or quasi-continuous functions) with respect to ideal equal convergence can be equal to the Baire system (generated by the same family of functions) but with respect to classical convergence (not classical equal convergence). Therefore, the use of ideal equal convergence instead of classical equal convergence can produce new Baire classes. This is the case of the second c-type (for continuous functions) and the second q-type (for quasi-continuous functions). 
 
The paper is organized as follows. Section \ref{Preliminaries} is devoted to introducing necessary notions. In Section \ref{Basic properties of ideal convergence} we collect some basic facts concerning ideal convergence. Finally, Sections \ref{Ideal equal convergence of sequences of quasi-continuous functions} and \ref{Ideal equal convergence of sequences of continuous functions} contain the characterizations of ideal equal Baire classes generated by the families of quasi-continuous and continuous functions, respectively. Both of these sections have a similar structure. We start with introducing some useful notions, then prove partial results and end with the mentioned characterizations (Theorems \ref{Borel1} and \ref{b}) which summarize the considerations included in the whole section.

\section{Preliminaries}
\label{Preliminaries}

We use a standard set-theoretic and topological notation.

\subsection{Ideals}

A collection $\mathcal{I}\subset\mathcal{P}(X)$ is an \emph{ideal on $X$} if it is closed under finite unions and subsets. We additionally assume that each ideal contains $\fin(X)=[X]^{<\omega}$. Hence, we can write $\bigcup\I$ instead of $X$. In this paper we consider only ideals on countable sets. In the theory of ideals a special role is played by the ideal $\fin=\fin(\omega)$. The \emph{filter dual to the ideal $\mathcal{I}$} is the collection $\mathcal{I}^*=\left\{A\subset X:X\setminus A\in\mathcal{I}\right\}$ and $\mathcal{I}^+=\left\{A\subset X:A\notin\mathcal{I}\right\}$ is the collection of all \emph{$\mathcal{I}$-positive sets}. 

An ideal $\I$ is \emph{dense} if every infinite subset of $\bigcup\I$ contains an infinite subset belonging to the ideal. If $Y\subset \bigcup\I$, then the \emph{restriction of $\mathcal{I}$ to the set $Y$}, $\mathcal{I}\upharpoonright Y=\left\{A\cap Y:A\in\mathcal{I}\right\}$, is an ideal on $Y$. We say that a family $\mathcal{G}\subset\mathcal{P}(X)$ \emph{generates the ideal $\mathcal{I}$} if $$\mathcal{I}=\left\{A\subset X:\ \exists_{k\in\omega}\ \exists_{G_0,\ldots,G_k\in\mathcal{G}}\  A\setminus (G_0\cup\ldots\cup G_k)\in\fin(X)\right\}.$$
Ideals $\I$ and $\J$ on $X$ are \emph{orthogonal} if there are $A\in\I$ and $B\in\J$ with $A\cup B=X$. 

The space $2^X$ of all functions $f:X\rightarrow 2$ is equipped with the product topology (each space $2=\left\{0,1\right\}$ carries the discrete topology). We treat $\mathcal{P}(X)$ as the space $2^X$ by identifying subsets of $X$ with their characteristic functions. All topological and descriptive notions in the context of ideals on $X$ will refer to this topology. 

Ideals $\mathcal{I}$ and $\mathcal{J}$ are \emph{isomorphic} if there is a bijection $f:\bigcup\mathcal{J}\rightarrow\bigcup\mathcal{I}$ such that 
$$A\in\mathcal{I} \Leftrightarrow f^{-1}[A]\in\mathcal{J}.$$
Isomorphisms preserve all the properties of ideals considered in this paper. If $\I$ is an ideal on some countable set $X$, then there is always an ideal on $\omega$ isomorphic to it, so it is sufficient to consider only ideals on $\omega$. All the results of this paper, even formulated only for ideals on $\omega$, are true for arbitrary ideals on countable sets.

The structure of ideals on countable sets is often described in terms of orders. We say that \emph{$\mathcal{I}$ is below $\mathcal{J}$ in the Kat\v{e}tov order} ($\mathcal{I}\leq_{K}\mathcal{J}$) if there is $f:\bigcup\mathcal{J}\rightarrow\bigcup\mathcal{I}$ such that
$$A\in\mathcal{I}\Rightarrow f^{-1}[A]\in\mathcal{J}.$$
Furthermore, if $f$ is a bijection between $\bigcup\mathcal{J}$ and $\bigcup\mathcal{I}$, we say that \emph{$\mathcal{J}$ contains an isomorphic copy of $\mathcal{I}$}, and write $\mathcal{I}\sqsubseteq\mathcal{J}$.

Suppose that $\mathcal{I}$ is an ideal on $X$, $A\subset X$ and $(A_n)_{n\in\omega}\subset\mathcal{P}(X)$. Then we define $\mathcal{I}\sqcup A=\{M\cup N:\ M\in\mathcal{I}\wedge N\subset A\}$ and 
$$\mathcal{I}\sqcup (A_n)_{n\in\omega}=\{M\cup N:\ M\in\mathcal{I}\ \wedge\ \exists_{n\in\omega}\ N\subset\bigcup_{i<n}A_i\}.$$

If $X$ and $Y$ are two sets, then their \emph{disjoint sum} is given by $X\oplus Y=\{0\}\times X\cup\{1\}\times Y$. Suppose now that $\I$ and $\J$ are ideals on $X$ and $Y$, respectively. Then we define the ideal $\mathcal{I}\oplus\mathcal{J}$ on $X\oplus Y$ by:
$$A\in\mathcal{I}\oplus\mathcal{J}\ \Leftrightarrow\ \left\{x\in X:\ (0,x)\in A\right\}\in\mathcal{I}\ \wedge\ \left\{y\in Y:\ (1,y)\in A\right\}\in\mathcal{J}.$$
The \emph{product} $\mathcal{I}\otimes\mathcal{J}$ of the ideals $\I$ and $\J$ is an ideal on $X\times Y$ given by:
$$A\in\mathcal{I}\otimes\mathcal{J}\ \Leftrightarrow\ \{x\in X:\ A_x\notin\mathcal{J}\}\in\mathcal{I},$$
where $A_x=\{y\in Y:\ (x,y)\in A\}$. In this definition we allow one of the ideals, $\I$ or $\J$, to contain only the empty set (so we drop the assumption that it contains all finite sets) and in this case we write $\emptyset\otimes\mathcal{J}$ and $\mathcal{I}\otimes\emptyset$ instead of $\{\emptyset\}\otimes\mathcal{J}$ and $\mathcal{I}\otimes\{\emptyset\}$, respectively.

\subsection{Ideal convergence}

Let $\I$ be an ideal on a countable set $I$. A sequence of reals $(x_i)_{i\in I}$ is \emph{$\I$-convergent} to $x\in \R$ if $\{i\in I:\ |x_i-x|\geq \varepsilon\}\in \I$ for any $\varepsilon>0$. In this case we write $(x_i)_{i\in I}\xrightarrow{\I} x$. Similarly, $(x_i)_{i\in I}$ is \emph{$\I$-discretely convergent} to $x$ ($(x_i)_{i\in I}\xrightarrow{\I-d} x$) if we have $\{i\in I:\ x_i\neq x\}\in \I$. A sequence $(f_i)_{i\in I}$ of real-valued functions defined on a set $X$ is \emph{$\I$-pointwise convergent} to $f\in\mathbb{R}^X$ ($(f_i)_{i\in I}\xrightarrow{\I} f$) if $\left(f_i\left(x\right)\right)_{i\in I}\xrightarrow{\I} f\left(x\right)$ for all $x\in X$. Similarly, $(f_i)_{i\in I}$ is \emph{$\I$-discretely convergent} to $f$ ($(f_i)_{i\in I}\xrightarrow{\I-d} f$) if $\left(f_i\left(x\right)\right)_{i\in I}\xrightarrow{\I-d} f\left(x\right)$ for all $x\in X$.

Let now $\I$ and $\J$ be ideals on the same countable set $I$. Let also $(f_i)_{i\in I}\subset\mathbb{R}^X$ and $f\in\mathbb{R}^X$ for some set $X$. We say that $(f_i)_{i\in I}$ is \emph{$\left(\I,\J\right)$-equal convergent} to $f$ ($(f_i)_{i\in I}\xrightarrow{(\I,\J)-e} f$) if there is a sequence $(\varepsilon_i)_{i\in I}$ of positive reals with $(\varepsilon_i)_{i\in I}\xrightarrow{\J}0$ such that $\{i\in I:\ |f_i(x)-f(x)|\geq \varepsilon_i\}\in\I$ for each $x\in X$. In this case we say that \emph{$f$ is an $\left(\I,\J\right)$-equal limit of $(f_i)_{i\in I}$}. If $\I$ and $\J$ are orthogonal ideals and $X$ is a non-empty set, then $\left(\I,\J\right)$-equal limits are not unique (cf. \cite[Theorem 6.1]{fil-stan2}).

The above notions generalize their classical counterparts -- $\fin$-convergence is the classical convergence, $\fin$-discrete convergence is the classical discrete convergence, and $\left(\fin,\fin\right)$-equal convergence is the classical equal convergence (discrete convergence and equal convergence in the classical cases were introduced by Cs{\'a}sz{\'a}r and Laczkovich in \cite{laczkovich-csaszar-equal-convergence-1975}). 

Given two ideals $\I$ and $\J$ on $I$, a set $X$ and a family $\mathcal{F}\subset\mathbb{R}^X$, we denote by $(\mathcal{I},\mathcal{J})\left(\mathcal{F}\right)$ the family of all functions $f\in\mathbb{R}^X$ which can be represented as an $\left(\I,\J\right)$-equal limit of a sequence of functions from $\mathcal{F}$. Moreover, we denote:
\begin{itemize}
	\item $(\mathcal{I},\mathcal{J})_0\left(\mathcal{F}\right)=\mathcal{F}$;
	\item $(\mathcal{I},\mathcal{J})_1\left(\mathcal{F}\right)=(\mathcal{I},\mathcal{J})\left(\mathcal{F}\right)$;
	\item $(\mathcal{I},\mathcal{J})_\alpha\left(\mathcal{F}\right)=(\mathcal{I},\mathcal{J})\left(\bigcup_{\beta<\alpha}(\mathcal{I},\mathcal{J})_\beta\left(\mathcal{F}\right)\right)$.
\end{itemize}

\subsection{Real functions}

Let $X$ be a topological space. By $C\left(X\right)$ we denote the family of all real-valued continuous functions defined on $X$. The class of all functions $f:X\to\R$ with the Baire property is denoted by $\textrm{Baire}\left(X\right)$. By $B_{\alpha}\left(X\right)$ we denote the family of all real-valued functions of Baire class $\alpha$, defined on $X$.

We say that a function $f:X\rightarrow \mathbb{R}$ is \emph{quasi-continuous in $x_{0}\in X$} if for every $\varepsilon>0$ and an open neighbourhood $U$ of $x_{0}$ there exists an open non-empty set $V\subset U$ such that $\left|f\left(x\right)-f\left(x_{0}\right)\right|<\varepsilon$ for every point $x\in V$. A function $f:X\rightarrow \mathbb{R}$ is \emph{quasi-continuous} if it is quasi-continuous in every point $x_{0}\in X$. We denote the class of all quasi-continuous functions on $X$ by $QC\left(X\right)$. All continuous functions as well as all left-continuous and right-continuous functions are quasi-continuous.

A subset $U$ of a topological space $X$ is \emph{semi-open} if $U\subset\overline{{\rm int}U}$. It is known that a function $f\colon X\to\mathbb{R}$ is quasi-continuous if and only if $f^{-1}[U]$ is semi-open for every open set $U\subset\mathbb{R}$. Moreover, a union of any family of semi-open sets is semi-open and an intersection of a semi-open set with an open set is semi-open.

A function $f:X\rightarrow \mathbb{R}$ is \emph{pointwise discontinuous} if the set $C\left(f\right)$ of continuity points of $f$ is dense in $X$. The class of all pointwise discontinuous functions defined on a space $X$ is denoted by $PWD\left(X\right)$. By $C_{q}\left(f\right)$ we denote the set of all quasi-continuity points of $f$. A function $f:X\rightarrow \mathbb{R}$ is in $PWD_0\left(X\right)$ if the set $X\setminus C_{q}\left(f\right)$ is nowhere dense in $X$. 

The notion of quasi-continuity has been introduced by Kempisty (see \cite{kempisty}). The Baire system generated by the family $QC\left(X\right)$ has been described by Grande (see \cite{grande}). Namely, if $X$ is a metric Baire space, then $PWD\left(X\right)$ is the first Baire class generated by $QC\left(X\right)$ with respect to classical convergence, and $PWD_0\left(X\right)$ is the first Baire class generated by $QC\left(X\right)$ with respect to discrete convergence. All higher Baire classes in both cases are equal to $\textrm{Baire}\left(X\right)$.

\section{Basic properties of ideal convergence}
\label{Basic properties of ideal convergence}

In this section we collect some basic observations which will be useful in our further considerations.

\begin{lemma}[Natkaniec and Szuca, {\cite[Corollary 14]{nat-szuca}}]
\label{0}
Suppose that $\mathcal{I}$ is an analytic ideal on $\omega$ and $X$ is a topological space. If $(f_n)_{n\in\omega}\subset \textrm{Baire} (X)$ is $\I$-convergent to some $f\colon X\to\mathbb{R}$, then $f\in \textrm{Baire} (X)$. 
\end{lemma}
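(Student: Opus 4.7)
The plan is to show that $f^{-1}[(r,\infty)]$ has the Baire property (abbreviated BP) for every $r\in\R$; by symmetry the analogous assertion for $f^{-1}[(-\infty,r)]$ follows, and together with the countable base of $\R$ these yield $f\in\textrm{Baire}(X)$.

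The first step is the pointwise identity
\[
\{x\in X: f(x)>r\}\;=\;\bigcup_{k\in\omega}\bigl\{x\in X:\{n\in\omega: f_n(x)\leq r+1/k\}\in\I\bigr\}.
\]
The forward inclusion applies $\I$-convergence at $x$ with $\varepsilon=(f(x)-r)/2$ and chooses $k$ with $1/k<\varepsilon$. For the reverse inclusion, if $\{n: f_n(x)\leq r+1/k\}\in\I$ and also $f(x)\leq r$, then $\{n: f_n(x)>r+1/k\}\subseteq\{n: |f_n(x)-f(x)|>1/k\}\in\I$ too, and the union of these two $\I$-sets is $\omega\notin\I$, a contradiction.

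It therefore suffices, for fixed $r$ and $k$, to show that $A=\{x:\{n: f_n(x)\leq r+1/k\}\in\I\}$ has the BP. The key move is to factor this set through $2^{\omega}$: define $g\colon X\to 2^{\omega}$ by $g(x)(n)=1$ iff $f_n(x)\leq r+1/k$, so that $A=g^{-1}[\I]$. Since each $f_n\in \textrm{Baire}(X)$, every coordinate set $\{x: g(x)(n)=1\}$ has the BP; hence the preimage under $g$ of every basic clopen set of $2^{\omega}$ (a finite boolean combination of coordinate sets) has the BP, and by countable unions and complements the same holds for preimages of open and closed subsets of $2^{\omega}$.

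Now analyticity of $\I$ enters: $\I$ is obtained by the Souslin operation applied to some scheme $(F_s)_{s\in\omega^{<\omega}}$ of closed subsets of $2^{\omega}$, so $g^{-1}[\I]$ is the Souslin operation applied to the scheme $(g^{-1}[F_s])_{s\in\omega^{<\omega}}$ of sets with the BP. The classical theorem of Nikodym asserts that in any topological space the family of BP sets is closed under the Souslin operation, so $A$ has the BP as required. The main point to watch is precisely that closure step: a merely Borel-level preimage argument would not suffice, since $\I$ is not assumed Borel, and this is exactly where the hypothesis of analyticity of $\I$ is consumed.
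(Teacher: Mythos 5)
Your argument is correct and self-contained, but note that the paper does not prove Lemma \ref{0} at all: it is imported verbatim from Natkaniec and Szuca \cite[Corollary 14]{nat-szuca}, so there is no in-paper proof to compare against. Your route is the standard one behind results of this kind (going back to the Laczkovich--Rec{\l}aw style of argument): reduce Baire measurability of $f$ to the Baire property of the sets $\{x:\{n:f_n(x)\le r+1/k\}\in\I\}$; code the membership pattern by a map $g\colon X\to 2^{\omega}$ so that this set is $g^{-1}[\I]$; observe that preimages under $g$ of clopen, hence of closed, subsets of $2^{\omega}$ have the Baire property because each $f_n$ does; write the analytic set $\I$ as the Souslin operation applied to a scheme of closed sets; use that $g^{-1}$ commutes with the Souslin operation; and conclude by Nikodym's theorem that the sets with the Baire property in an arbitrary topological space are closed under the Souslin operation. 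All of these steps check out, and the analyticity of $\I$ is consumed exactly where you say it is. Two minor points: the union in your pointwise identity should run over $k\ge 1$, and the reverse inclusion there uses $\omega\notin\I$, i.e.\ that $\I$ is a proper ideal --- this is implicit in the cited statement (for $\I=\mathcal{P}(\omega)$ the lemma is false, since then every function is an $\I$-limit of any sequence), but it is worth making explicit because the paper's definition of an ideal does not formally exclude the improper case.
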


\begin{lemma}
\label{1}
Suppose that $\mathcal{I}$ is an analytic (coanalytic) ideal on $\omega$. Then $\mathcal{I}\sqcup A$ and $\mathcal{I}\sqcup (A_n)_{n\in\omega}$ are analytic (coanalytic) for any $A\subset \omega$ and $(A_n)_{n\in\omega}\subset\mathcal{P}(\omega)$.
\end{lemma}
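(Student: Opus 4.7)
The plan is to reduce both claims to the observation that the operation $B \mapsto B\setminus A$ is continuous on $\mathcal{P}(\omega)$ (with the Cantor-space topology), so preimages of analytic/coanalytic sets under it stay analytic/coanalytic.

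First I would establish the combinatorial identity
\[
\mathcal{I}\sqcup A \;=\; \{B\subset\omega : B\setminus A\in\mathcal{I}\}.
\]
One inclusion is clear: if $B=M\cup N$ with $M\in\mathcal{I}$ and $N\subset A$, then $B\setminus A\subset M$, so $B\setminus A\in\mathcal{I}$ by downward closure. Conversely, if $B\setminus A\in\mathcal{I}$, set $M=B\setminus A$ and $N=B\cap A$ to exhibit the required decomposition. Next, the map $\phi_A\colon\mathcal{P}(\omega)\to\mathcal{P}(\omega)$ defined by $\phi_A(B)=B\setminus A$ is continuous, since under the identification with $2^\omega$ it is the coordinate-wise product $(\chi_B(n))_{n\in\omega}\mapsto(\chi_B(n)\cdot\chi_{\omega\setminus A}(n))_{n\in\omega}$. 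Hence $\mathcal{I}\sqcup A=\phi_A^{-1}[\mathcal{I}]$, and as a continuous preimage of an analytic (coanalytic) set it is analytic (coanalytic).

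For the second part, I would write
\[
\mathcal{I}\sqcup(A_n)_{n\in\omega} \;=\; \bigcup_{n\in\omega}\bigl(\mathcal{I}\sqcup B_n\bigr), \qquad B_n=\bigcup_{i<n}A_i,
\]
which is immediate from the definition: $B=M\cup N$ with $N\subset B_n$ for some $n$ iff $B\in\mathcal{I}\sqcup B_n$ for some $n$. Each summand is analytic (coanalytic) by the first part, and since both $\mathbf{\Sigma}^1_1$ and $\mathbf{\Pi}^1_1$ are closed under countable unions, the conclusion follows.

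No serious obstacle is anticipated; the proof is essentially a bookkeeping exercise in descriptive set theory. The only point that requires a moment of care is remembering that $\mathbf{\Pi}^1_1$ is closed under countable unions as well as countable intersections, so that the coanalytic case of the second assertion goes through in the same manner as the analytic one.
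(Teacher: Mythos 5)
Your proof is correct and follows essentially the same route as the paper: identify $\mathcal{I}\sqcup A$ with the preimage of $\mathcal{I}$ under the continuous map $B\mapsto B\setminus A$, and write $\mathcal{I}\sqcup (A_n)_{n\in\omega}$ as the countable union over $n$ of the analogous preimages for $\bigcup_{i<n}A_i$, using closure of $\mathbf{\Sigma}^1_1$ and $\mathbf{\Pi}^1_1$ under countable unions. You merely spell out the set-theoretic identities in more detail than the paper does; there is no substantive difference.
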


\begin{proof}
Let $\varphi\colon\mathcal{P}(\omega)\to\mathcal{P}(\omega)$ be given by $\varphi(M)=M\setminus A$. For each $n\in\omega$ let also $\varphi_n\colon\mathcal{P}(\omega)\to\mathcal{P}(\omega)$ be given by $\varphi_n(M)=M\setminus \bigcup_{i<n}A_i$. Then we have $\mathcal{I}\sqcup A=\varphi^{-1}[\I]$ and $\mathcal{I}\sqcup (A_n)_{n\in\omega}=\bigcup_{n\in\omega}\varphi_n^{-1}[\I]$. Now it suffices to observe that $\varphi$ as well as all $\varphi_n$'s are continuous.
\end{proof}

Let $\I,\J$ be ideals on $\omega$. By $\wlasnosc(\I,\J)$ we denote the following sentence: For every partition $(A_n)_{n\in\omega}\subset\J$ of $\omega$ there exists $S\notin\I$ such that $A_n\cap S\in\I$ for every $n\in\omega$.

\begin{lemma}[Filip{\'o}w and Staniszewski, {\cite[Theorem 5.2]{fil-stan2}}]\label{wlasn}
Let $\I,\J$ be ideals on $\omega$ such that $\wlasnosc(\I,\J) $ does not hold.
For every set $X$ and every sequence  $(f_n)_{n\in \omega}$ of real-valued functions defined on $X$,
if $(f_n)_{n\in \omega}\xrightarrow{\I} f$ for some $f\in\mathbb{R}^X$, then 
 $(f_n)_{n\in \omega}\xrightarrow{\left(\I,\J\right)-e}f$.
\end{lemma}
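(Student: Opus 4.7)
The plan is to convert the failure of $\wlasnosc(\I,\J)$ into an explicit construction of the error sequence $(\varepsilon_i)_{i\in\omega}$ required by the definition of $(\I,\J)$-equal convergence. Since $\wlasnosc(\I,\J)$ fails, I can fix once and for all a partition $(A_n)_{n\in\omega}\subset\J$ of $\omega$ with the following feature (the contrapositive of the statement): whenever $S\subset\omega$ satisfies $A_n\cap S\in\I$ for every $n\in\omega$, then already $S\in\I$. This partition will serve as a scaffolding against which I calibrate the $\varepsilon_i$'s.

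Next I would define the sequence explicitly: for each $i\in\omega$, let $\varepsilon_i=1/(n+1)$, where $n$ is the unique index with $i\in A_n$. The first verification is that $(\varepsilon_i)_{i\in\omega}\xrightarrow{\J}0$: given any $\varepsilon>0$, pick $N$ with $1/(N+1)<\varepsilon$; then $\{i\in\omega:\varepsilon_i\geq\varepsilon\}\subset A_0\cup\cdots\cup A_{N-1}$, which lies in $\J$ since $\J$ is closed under finite unions and each $A_k\in\J$.

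The second verification is the heart of the argument. Fix $x\in X$ and write $B^x=\{i\in\omega:|f_i(x)-f(x)|\geq\varepsilon_i\}$; I need $B^x\in\I$. For each positive integer $k$, set $B^x_k=\{i\in\omega:|f_i(x)-f(x)|\geq 1/k\}$; the hypothesis $(f_i)_{i\in\omega}\xrightarrow{\I}f$ (applied at the point $x$) gives $B^x_k\in\I$ for every $k$. Now for every $n\in\omega$, any $i\in A_n\cap B^x$ satisfies $|f_i(x)-f(x)|\geq\varepsilon_i=1/(n+1)$, so $A_n\cap B^x\subset B^x_{n+1}\in\I$ and therefore $A_n\cap B^x\in\I$. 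The defining property of the partition $(A_n)_{n\in\omega}$ now yields $B^x\in\I$, as required.

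There is no real obstacle here: once the right partition is extracted from the failure of $\wlasnosc(\I,\J)$, the only delicate point is choosing $\varepsilon_i$ small enough on $A_n$ (of order $1/(n+1)$) so that the slice $A_n\cap B^x$ is pushed inside an ideal set of the $\I$-convergence filtration, yet with $1/(n+1)\to 0$ so that $\J$-smallness of the threshold set is automatic from the partition being in $\J$. The argument is uniform in $x$ because the partition and hence the sequence $(\varepsilon_i)_{i\in\omega}$ are chosen independently of $x$.
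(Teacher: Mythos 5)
Your proof is correct: the partition extracted from the failure of $\wlasnosc(\I,\J)$, the choice $\varepsilon_i=\frac{1}{n+1}$ for $i\in A_n$, and the inclusion $A_n\cap B^x\subset B^x_{n+1}\in\I$ together give exactly the required witness for $(\I,\J)$-equal convergence. The paper itself offers no proof of this lemma (it is quoted from Filip\'{o}w and Staniszewski), and your argument is the standard one for this implication, so there is nothing to add.
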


\begin{remark}
Suppose that $\I$ is an ideal on $\omega$. Topological spaces $X$ such that for every sequence of real-valued continuous functions $(f_n)_{n\in \omega}$ defined on $X$,
if $(f_n)_{n\in \omega}\xrightarrow{\fin} 0$, then 
 $(f_n)_{n\in \omega}\xrightarrow{\left(\I,\I\right)-e}0$, are called $\I QN$-spaces. Recently, {\v{S}}upina (see \cite{supina}) showed that an ideal $\I$ contains an isomorphic copy of the ideal $\fin\otimes\fin$ if and only if every topological space is an $\I QN$-space.
\end{remark}

\begin{lemma}
\label{2}
Suppose that $\mathcal{I}$ is an ideal on $\omega$ and $(f_n)_{n\in\omega}\subset\mathbb{R}^X$. If $(f_n)_{n\in\omega}\xrightarrow{\mathcal{I}-d} f$ for some $f\in\mathbb{R}^X$, then $(f_n)_{n\in\omega}\xrightarrow{(\mathcal{I},\mathcal{J})-e} f$ for any ideal $\mathcal{J}$.
\end{lemma}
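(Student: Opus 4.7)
The plan is to exhibit an explicit witnessing sequence $(\varepsilon_n)_{n\in\omega}$ for the $(\mathcal{I},\mathcal{J})$-equal convergence, and this witness can be chosen independently of $\mathcal{J}$ and of the functions involved. The natural choice is any sequence of positive reals that tends to $0$ in the classical sense, for instance $\varepsilon_n=\frac{1}{n+1}$.

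First, I would observe that $(\varepsilon_n)_{n\in\omega}\xrightarrow{\mathcal{J}}0$ for every ideal $\mathcal{J}$ on $\omega$. Indeed, since each ideal contains $\fin$ and $(\varepsilon_n)_{n\in\omega}$ converges to $0$ in the classical sense, for every $\delta>0$ the set $\{n\in\omega:\varepsilon_n\geq\delta\}$ is finite and hence lies in $\mathcal{J}$.

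Next, fix $x\in X$. By the hypothesis of $\mathcal{I}$-discrete convergence, the set $A_x=\{n\in\omega:f_n(x)\neq f(x)\}$ belongs to $\mathcal{I}$. I would then note the inclusion
$$\{n\in\omega:|f_n(x)-f(x)|\geq\varepsilon_n\}\subseteq A_x,$$
since whenever $f_n(x)=f(x)$ we have $|f_n(x)-f(x)|=0<\varepsilon_n$, so any $n$ on the left-hand side must satisfy $f_n(x)\neq f(x)$. Because $\mathcal{I}$ is closed under subsets, the left-hand side also belongs to $\mathcal{I}$.

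Combining these two observations shows that $(\varepsilon_n)_{n\in\omega}$ witnesses $(f_n)_{n\in\omega}\xrightarrow{(\mathcal{I},\mathcal{J})-e}f$. There is no real obstacle here; the only subtle point is recognising that the witness sequence need not depend on $\mathcal{J}$ at all because any classical null sequence is automatically $\mathcal{J}$-null for every ideal.
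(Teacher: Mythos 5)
Your proposal is correct and follows essentially the same argument as the paper: take $\varepsilon_n=\frac{1}{n+1}$, note it is $\mathcal{J}$-null for every ideal $\mathcal{J}$ since ideals contain $\fin$, and observe that $\{n\in\omega:|f_n(x)-f(x)|\geq\varepsilon_n\}\subset\{n\in\omega:f_n(x)\neq f(x)\}\in\mathcal{I}$. No issues.
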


\begin{proof}
Let $\varepsilon_n=\frac{1}{n+1}$ for each $n\in\omega$. Then $(\varepsilon_n)_{n\in\omega}\xrightarrow{\mathcal{J}} 0$ for any ideal $\mathcal{J}$ and we have $$\left\{n\in\omega:\ |f(x)-f_n(x)|\geq \frac{1}{n+1}\right\}\subset \{n\in\omega:\ f(x)\neq f_n(x)\}\in\mathcal{I}$$ for any $x\in X$.
\end{proof}

\begin{lemma}
\label{3}
Suppose that $\mathcal{I}$ and $\mathcal{J}$ are ideals on $\omega$, $(f_n)_{n\in\omega}\subset\mathbb{R}^X$ and $f\in\mathbb{R}^X$ for some set $X$. If $(f_n)_{n\in\omega}\xrightarrow{(\mathcal{I},\mathcal{J})-e} f$ and $(\varepsilon_n)_{n\in\omega}$ is the sequence of positive reals $\mathcal{J}$-convergent to $0$ from the definition of $(\mathcal{I},\mathcal{J})$-equal convergence, then $(f_n)_{n\in\omega}\xrightarrow{\mathcal{I}\sqcup (A_k)_{k\in\omega}} f$, where $A_0=\{n\in\omega:\varepsilon_n\geq 1\}\in\mathcal{J}$ and $A_k=\{n\in\omega:\ \frac{1}{k+1}\leq \varepsilon_n<\frac{1}{k}\}\in\mathcal{J}$ for all $k\geq 1$.
\end{lemma}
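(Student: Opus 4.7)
The goal is to show that for every $x\in X$ and every $\delta>0$, the set $\{n\in\omega:\ |f_n(x)-f(x)|\geq\delta\}$ lies in $\mathcal{I}\sqcup(A_k)_{k\in\omega}$, i.e., can be written as $M\cup N$ with $M\in\mathcal{I}$ and $N\subset\bigcup_{i<k}A_i$ for some $k$. The idea is to split this set according to the size of $\varepsilon_n$: on the part where $\varepsilon_n$ is small we use the hypothesis $\{n:\ |f_n(x)-f(x)|\geq\varepsilon_n\}\in\I$ to absorb things into the $\I$-piece, while on the part where $\varepsilon_n$ is not too small we use that the $A_k$'s cover exactly those $n$'s.

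First I would verify the claim made in the statement that each $A_k$ belongs to $\J$. Since $(\varepsilon_n)_{n\in\omega}\xrightarrow{\J}0$, for every $\eta>0$ the set $\{n:\ \varepsilon_n\geq\eta\}$ is in $\J$. Taking $\eta=1$ gives $A_0\in\J$, and for $k\geq 1$ we have $A_k\subset\{n:\ \varepsilon_n\geq\frac{1}{k+1}\}\in\J$. Next I would observe the crucial indexing fact
\[
\bigcup_{i<k}A_i=\{n\in\omega:\ \varepsilon_n\geq 1/k\},
\]
which follows from the very definition of the $A_i$'s (the intervals $[1/2,1),[1/3,1/2),\ldots,[1/k,1/(k-1))$ together with $[1,\infty)$ tile $[1/k,\infty)$).

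Now fix $x\in X$ and $\delta>0$, and pick $k\in\omega$ with $1/k\leq\delta$. Decompose
\[
\{n\in\omega:\ |f_n(x)-f(x)|\geq\delta\}=M\cup N,
\]
where $M=\{n:\ |f_n(x)-f(x)|\geq\delta\ \wedge\ \varepsilon_n<1/k\}$ and $N=\{n:\ |f_n(x)-f(x)|\geq\delta\ \wedge\ \varepsilon_n\geq 1/k\}$. For $n\in M$ we have $\varepsilon_n<1/k\leq\delta\leq|f_n(x)-f(x)|$, so $M\subset\{n:\ |f_n(x)-f(x)|\geq\varepsilon_n\}$, which is in $\I$ by the definition of $(\I,\J)$-equal convergence. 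On the other hand, by the indexing fact above, $N\subset\{n:\ \varepsilon_n\geq 1/k\}=\bigcup_{i<k}A_i$. Hence the set belongs to $\I\sqcup(A_k)_{k\in\omega}$, as required.

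I do not expect any genuine obstacle: the proof is a careful unpacking of the definitions together with a two-case split. The only point that requires some attention is making sure the threshold $k$ is chosen so that $1/k\leq\delta$ (so that $\varepsilon_n<1/k$ forces $\varepsilon_n<\delta\leq|f_n(x)-f(x)|$) and that the indices in $A_i$ line up correctly to cover $\{n:\ \varepsilon_n\geq 1/k\}$.
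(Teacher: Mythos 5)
Your proof is correct and follows essentially the same route as the paper: fix $x$ and a threshold $\delta>0$, choose $k$ with $1/k\leq\delta$, and split $\{n:\ |f_n(x)-f(x)|\geq\delta\}$ into the part lying in $\bigcup_{i<k}A_i$ and a remainder contained in $\{n:\ |f_n(x)-f(x)|\geq\varepsilon_n\}\in\mathcal{I}$, which is exactly the paper's decomposition. The additional check that each $A_k\in\mathcal{J}$ and the explicit identity $\bigcup_{i<k}A_i=\{n:\ \varepsilon_n\geq 1/k\}$ are fine and only make explicit what the paper leaves implicit.
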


\begin{proof}
We will show that $(f_n)_{n\in\omega}\xrightarrow{\mathcal{I}\sqcup (A_n)_{n\in\omega}} f$. Consider any $x\in X$ and $\varepsilon>0$. There is $k\in\omega$ with $\frac{1}{k}<\varepsilon$. Then $\{n\in\omega:\ |f_n(x)-f(x)|\geq \varepsilon\}$ is contained in
$$\bigcup_{i< k}A_i\ \cup\ \left\{n\in\bigcup_{i\geq k}A_i:\ |f_n(x)-f(x)|\geq\varepsilon_n\right\}\in \mathcal{I}\sqcup (A_n)_{n\in\omega}.$$
This finishes the proof.
\end{proof}

\begin{lemma}
Suppose that $(f_n)_{n\in\omega}\subset\mathbb{R}^X$ for some set $X$. Let $\I_1, \I_2, \J_1$ and $\J_2$ be ideals on $\omega$ such that $\I_1\subset\I_2$ and $\J_1\subset\J_2$. If $(f_n)_{n\in\omega}\xrightarrow{(\I_1,\J_1)-e} f$ for some $f\in\mathbb{R}^X$, then $(f_n)_{n\in\omega}\xrightarrow{(\I_2,\J_2)-e} f$.
\end{lemma}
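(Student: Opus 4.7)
The plan is to prove this by verifying that the same witness sequence $(\varepsilon_n)_{n\in\omega}$ that witnesses $(\I_1,\J_1)$-equal convergence also witnesses $(\I_2,\J_2)$-equal convergence, invoking only the monotonicity of $\J$-convergence and of ideal membership with respect to set inclusion of ideals.

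First, I would unpack the hypothesis. Since $(f_n)_{n\in\omega}\xrightarrow{(\I_1,\J_1)-e}f$, by definition there is a sequence $(\varepsilon_n)_{n\in\omega}$ of positive reals with $(\varepsilon_n)_{n\in\omega}\xrightarrow{\J_1}0$ such that for every $x\in X$ the set $E_x=\{n\in\omega:|f_n(x)-f(x)|\geq\varepsilon_n\}$ belongs to $\I_1$.

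Next, I would verify the two required conditions for $(\I_2,\J_2)$-equal convergence with the same sequence $(\varepsilon_n)_{n\in\omega}$. For the first, fix $\varepsilon>0$; then $\{n\in\omega:\varepsilon_n\geq\varepsilon\}\in\J_1\subset\J_2$, so $(\varepsilon_n)_{n\in\omega}\xrightarrow{\J_2}0$. For the second, for every $x\in X$ we have $E_x\in\I_1\subset\I_2$. Both clauses of the definition of $(\I_2,\J_2)$-equal convergence are thus satisfied, and the conclusion follows.

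There is no real obstacle here; the only substantive content is observing that the defining conditions of $(\I,\J)$-equal convergence are monotone in both ideal parameters (smaller ideal $\Rightarrow$ stronger convergence), and the same witnessing sequence transfers verbatim. I would present the argument in a few lines without introducing auxiliary notation.
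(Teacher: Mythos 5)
Your proof is correct and is exactly the argument the paper intends, since it dismisses this lemma as ``Straightforward'': the same witnessing sequence $(\varepsilon_n)_{n\in\omega}$ works because $\J_1\subset\J_2$ gives $\J_2$-convergence of the $\varepsilon_n$'s and $\I_1\subset\I_2$ gives membership of each exceptional set in $\I_2$. Nothing is missing.
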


\begin{proof}
Straightforward.
\end{proof}

\begin{lemma}
\label{4}
Suppose that $\mathcal{F}\subset\mathbb{R}^X$ for some set $X$. Let $\I_1, \I_2, \J_1$ and $\J_2$ be ideals on $\omega$. Then $(\I_1\oplus \I_2,\J_1\oplus \J_2)\left(\mathcal{F}\right)=(\mathcal{I}_1,\mathcal{J}_1)\left(\mathcal{F}\right)\cap (\mathcal{I}_2,\mathcal{J}_2)\left(\mathcal{F}\right)$.
\end{lemma}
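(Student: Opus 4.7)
The plan is to prove both inclusions directly from the definitions, using the fact that the disjoint sum of ideals $\mathcal{K}_1\oplus \mathcal{K}_2$ is characterised coordinatewise: a set $A\subseteq \omega\oplus\omega$ is in $\mathcal{K}_1\oplus\mathcal{K}_2$ iff its ``$0$-slice'' $\{n:(0,n)\in A\}$ lies in $\mathcal{K}_1$ and its ``$1$-slice'' $\{n:(1,n)\in A\}$ lies in $\mathcal{K}_2$. Applied to the set $\{i:\varepsilon_i\geq \eta\}$ this tells us that $\mathcal{J}_1\oplus\mathcal{J}_2$-convergence of a sequence of reals is equivalent to the $\mathcal{J}_1$-convergence of the $0$-slice \emph{and} the $\mathcal{J}_2$-convergence of the $1$-slice; applied to $\{i:|f_i(x)-f(x)|\geq\varepsilon_i\}$ it gives the analogous statement for $\mathcal{I}_1\oplus\mathcal{I}_2$. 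Both inclusions of the lemma follow by splitting or assembling witness data along the two coordinates $\{0\}\times\omega$ and $\{1\}\times\omega$.

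For the inclusion ``$\subseteq$'', suppose $f\in(\mathcal{I}_1\oplus\mathcal{I}_2,\mathcal{J}_1\oplus\mathcal{J}_2)(\mathcal{F})$, witnessed by $(f_i)_{i\in\omega\oplus\omega}\subset\mathcal{F}$ and a sequence $(\varepsilon_i)_{i\in\omega\oplus\omega}$ of positive reals which is $\mathcal{J}_1\oplus\mathcal{J}_2$-null. Define $g_n=f_{(0,n)}$ and $\delta_n=\varepsilon_{(0,n)}$ for $n\in\omega$. By the slice characterisation, $(\delta_n)$ is $\mathcal{J}_1$-null, and for every $x\in X$ the set $\{n:|g_n(x)-f(x)|\geq\delta_n\}$ is the $0$-slice of a set in $\mathcal{I}_1\oplus\mathcal{I}_2$, hence belongs to $\mathcal{I}_1$. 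Thus $f\in(\mathcal{I}_1,\mathcal{J}_1)(\mathcal{F})$; the same argument with the $1$-slice gives $f\in(\mathcal{I}_2,\mathcal{J}_2)(\mathcal{F})$.

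For the inclusion ``$\supseteq$'', suppose $f$ is an $(\mathcal{I}_1,\mathcal{J}_1)$-equal limit of some $(g_n)\subset\mathcal{F}$ witnessed by $(\delta_n)$, and also an $(\mathcal{I}_2,\mathcal{J}_2)$-equal limit of some $(h_n)\subset\mathcal{F}$ witnessed by $(\eta_n)$. Define $f_{(0,n)}=g_n$, $f_{(1,n)}=h_n$, $\varepsilon_{(0,n)}=\delta_n$, $\varepsilon_{(1,n)}=\eta_n$. The slice characterisation again shows that $(\varepsilon_i)_{i\in\omega\oplus\omega}$ is $\mathcal{J}_1\oplus\mathcal{J}_2$-null and that $\{i:|f_i(x)-f(x)|\geq\varepsilon_i\}\in\mathcal{I}_1\oplus\mathcal{I}_2$ for every $x\in X$, yielding $f\in(\mathcal{I}_1\oplus\mathcal{I}_2,\mathcal{J}_1\oplus\mathcal{J}_2)(\mathcal{F})$.

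There is really no obstacle here; the only thing to be careful about is not to drop the positivity of the $\varepsilon_i$'s (which is automatic since $\delta_n$ and $\eta_n$ are positive) and to verify the slice characterisation of $\mathcal{K}_1\oplus\mathcal{K}_2$ once, before invoking it twice. Consequently the proof can be written in a few lines.
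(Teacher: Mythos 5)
Your proof is correct and follows essentially the same route as the paper's: both inclusions are obtained by splitting, respectively assembling, the witnessing sequences of functions and of $\varepsilon$'s along the two coordinates of the disjoint sum, using the coordinatewise description of $\mathcal{K}_1\oplus\mathcal{K}_2$. No issues to report.
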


\begin{proof}
Take any $f\in (\I_1\oplus \I_2,\J_1\oplus \J_2)\left(\mathcal{F}\right)$. There are a sequence of real-valued functions $(f_{(i,n)})_{(i,n)\in 2\times\omega}\subset\mathbb{R}^X$ and a sequence $(\varepsilon_{(i,n)})_{(i,n)\in 2\times\omega}$ of positive reals $\left(\J_1\oplus \J_2\right)$-convergent to $0$ such that $\{(i,n)\in 2\times\omega: |f_{(i,n)}(x)-f(x)|\geq \varepsilon_{(i,n)}\}\in\I_1\oplus \I_2$ for each $x\in X$. Then $(\varepsilon_{(0,n)})_{n\in\omega}$ is $\J_1$-convergent to $0$, $(\varepsilon_{(1,n)})_{n\in\omega}$ is $\J_2$-convergent to $0$ and for each $x\in X$ we have $\{n\in\omega: |f_{(0,n)}(x)-f(x)|\geq \varepsilon_{(0,n)}\}\in\mathcal{I}_1$ and $\{n\in\omega: |f_{(1,n)}(x)-f(x)|\geq \varepsilon_{(1,n)}\}\in\mathcal{I}_2$. Therefore, $f\in (\mathcal{I}_1,\mathcal{J}_1)\left(\mathcal{F}\right)\cap (\mathcal{I}_2,\mathcal{J}_2)\left(\mathcal{F}\right)$.

To show the opposite inclusion, take any $f\in (\mathcal{I}_1,\mathcal{J}_1)\left(\mathcal{F}\right)\cap (\mathcal{I}_2,\mathcal{J}_2)\left(\mathcal{F}\right)$. There are $(f^1_n)_{n\in\omega},(f^2_n)_{n\in\omega}\subset\mathbb{R}^X$ and two sequences of positive reals $(\varepsilon^1_n)_{n\in\omega}$ and $(\varepsilon^2_n)_{n\in\omega}$ $\J_1$-convergent to $0$ and $\J_2$-convergent to $0$, respectively, such that for each $x\in X$ we have $\{n\in\omega: |f^1_n(x)-f(x)|\geq \varepsilon_{n}^{1}\}\in\mathcal{I}_1$ and $\{n\in\omega: |f^2_n(x)-f(x)|\geq \varepsilon_{n}^{2}\}\in\mathcal{I}_2$. Define $\varepsilon_{(i,n)}=\varepsilon^{i+1}_n$ and $f_{(i,n)}=f^{i+1}_n$ for each $(i,n)\in 2\times\omega$. Then $(\varepsilon_{(i,n)})_{(i,n)\in 2\times\omega}$ is $\J_1\oplus \J_2$-convergent to $0$. Moreover, given any $x\in X$ we have $\{(i,n)\in 2\times\omega: |f_{(i,n)}(x)-f(x)|\geq \varepsilon_{(i,n)}\}\in\I_1\oplus\I_2$. This finishes the proof.
\end{proof}

Recall that if $\I$ and $\J$ are orthogonal ideals and $X$ is non-empty, then $\left(\I,\J\right)$-equal limits are not unique.

\begin{lemma}
\label{5}
If $\I$ and $\J$ are orthogonal ideals on $\omega$, then $(\mathcal{I},\mathcal{J})\left(\mathcal{F}\right)=\mathbb{R}^X$ for any set $X$ and non-empty family of functions $\mathcal{F}\subset\mathbb{R}^X$.
\end{lemma}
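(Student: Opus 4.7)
The plan is to take a constant sequence from $\mathcal{F}$ and exploit the orthogonality of $\I$ and $\J$ to construct a single $\J$-null sequence $(\varepsilon_n)$ that simultaneously witnesses $(\I,\J)$-equal convergence to an arbitrary target function.

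First, I would pick any $g\in\mathcal{F}$ (possible since $\mathcal{F}\neq\emptyset$) and, using orthogonality, choose $A\in\I$ and $B\in\J$ with $A\cup B=\omega$; replacing $B$ by $B\setminus A$, I may assume $A\cap B=\emptyset$. Set $f_n=g$ for every $n\in\omega$, and define
\[
\varepsilon_n=\frac{1}{n+1}\ \text{for } n\in A,\qquad \varepsilon_n=n+1\ \text{for } n\in B.
\]
Each $\varepsilon_n$ is positive. For any $\delta>0$, only finitely many $n\in A$ satisfy $\varepsilon_n\geq\delta$, so $\{n:\varepsilon_n\geq\delta\}$ is contained in $B$ modulo a finite set and therefore lies in $\J$; hence $(\varepsilon_n)\xrightarrow{\J}0$.

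Now fix an arbitrary $f\in\mathbb{R}^X$ and $x\in X$. The set $\{n\in\omega:|f_n(x)-f(x)|\geq\varepsilon_n\}$ decomposes as its $A$-part, contained in $A\in\I$, and its $B$-part, which is $\{n\in B:n+1\leq|g(x)-f(x)|\}$ and hence finite. The union lies in $\I$ (which contains all finite sets), verifying $(f_n)\xrightarrow{(\I,\J)-e}f$. Since $f\in\mathbb{R}^X$ was arbitrary, this gives $(\I,\J)(\mathcal{F})=\mathbb{R}^X$.

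The only subtlety is that $\varepsilon_n$ cannot depend on $x$ while the deviation $|g(x)-f(x)|$ does; this is overcome by letting $\varepsilon_n\to\infty$ along $B$, which makes the $B$-part of the bad set finite for each individual $x$. Beyond this there is no real obstacle -- the statement is essentially a one-line consequence of orthogonality, and the non-uniqueness fact cited before the lemma is exactly of this flavour.
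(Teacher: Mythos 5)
Your proposal is correct and takes essentially the same approach as the paper: the paper also uses the constant sequence $f_n=g$ with $\varepsilon_n$ of order $\frac{1}{n+1}$ on the set $A\in\I$ (where the bad set lies in $\I$ automatically) and $\varepsilon_n$ tending to infinity on $B\in\J$ (making the bad set finite), the only difference being that the paper packages the two scales through the decomposition of Lemma \ref{4} while you glue them directly into a single sequence. There is no gap.
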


\begin{proof}
Let $A\in\I$ and $B\in\J$ be such that $A\cup B=\omega$. By Lemma \ref{4} we have $(\mathcal{I},\mathcal{J})\left(\mathcal{F}\right)=(\mathcal{P}(A),\mathcal{J}\upharpoonright A)\left(\mathcal{F}\right)\cap (\mathcal{I}\upharpoonright B,\mathcal{P}(B))\left(\mathcal{F}\right)$. Let $g\in\mathcal{F}$.

Firstly, we will show that $(\mathcal{P}(A),\mathcal{J}\upharpoonright A)\left(\mathcal{F}\right)\supset\mathbb{R}^X$ (the other inclusion is trivial). Take any $f\in\mathbb{R}^X$ and define $\varepsilon_n=\frac{1}{n+1}$ and $f_n=g$ for all $n\in A$. Then $(\varepsilon_n)_{n\in A}$ is $\mathcal{J}\upharpoonright A$-convergent to $0$ and we have  $\{n\in A: |f_n(x)-f(x)|\geq \varepsilon_n\}\in\mathcal{P}(A)$ for any $x\in X$.

Now we deal with the inclusion $(\mathcal{I}\upharpoonright B,\mathcal{P}(B))\left(\mathcal{F}\right)\supset\mathbb{R}^X$. Take any $f\in\mathbb{R}^X$ and define $\varepsilon_n=n$ and $f_n=g$ for all $n\in B$. Then $(\varepsilon_n)_{n\in B}$ is $\mathcal{P}(B)$-convergent to $0$. Moreover, given any $x\in X$, there are only finitely many $n\in B$ with $|f_n(x)-f(x)|\geq n$. Hence, $\{n\in B:\ |f_n(x)-f(x)|\geq \varepsilon_n\}\in\fin\subset\mathcal{I}\upharpoonright B$ for any $x\in X$.
\end{proof}

\section{Ideal equal convergence of sequences of quasi-continuous functions}
\label{Ideal equal convergence of sequences of quasi-continuous functions}

In this section we want to characterize ideal equal Baire classes generated by the family of quasi-continuous functions. In the first subsection we introduce some useful notions. Next, we give some examples and prove the mentioned characterization.

\subsection{An infinite game and the q-types}

Let $\I$ be an ideal. Laflamme (see \cite{laf}) defined an infinite game $G_{1}\left(\I\right)$ as follows: Player I in his $n$'th move plays an element $C_{n}\in\I$, and then Player II responses with any $a_{n}\notin C_{n}$. Player I wins if $\left\{a_{n}:\ n\in \omega\right\}\in \I$. Otherwise, Player II wins.

\begin{theorem}[{\cite[Fact 3.10]{KZ}}, see also {\cite[Section 5]{KS}}]
\label{determinacja}
If $\mathcal{I}$ is a coanalytic ideal, then the game $G_{1}(\mathcal{I})$ is determined, i.e., one of the players has a winning strategy.
\end{theorem}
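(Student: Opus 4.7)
The plan is to recast $G_{1}(\mathcal{I})$ as a game of perfect information on a Polish space, to bound the projective complexity of its payoff set using the coanalyticity of $\mathcal{I}$, and then to reduce determinacy to Martin's Borel determinacy theorem via an unraveling argument.

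First, I would identify each subset of $\omega$ with its characteristic function in $2^{\omega}$, so that a complete play of $G_{1}(\mathcal{I})$ becomes a point of the Polish space $X = (2^{\omega} \times \omega)^{\omega}$. Under the convention that the first player to violate a rule of the game loses (Player~I for playing some $C_{n} \notin \mathcal{I}$, Player~II for playing $a_{n} \in C_{n}$), the game becomes a standard Gale--Stewart game on $X$ with some payoff set $W_{I}$ for Player~I. The rule-violation component of $W_{I}$ is manifestly Borel; the remaining part records plays in which every $C_{n}$ lies in $\mathcal{I}$ and $\{a_{n} : n \in \omega\} \in \mathcal{I}$. Since $\mathcal{I}$ is assumed coanalytic in $2^{\omega}$, both of these conditions are coanalytic, so $W_{I}$ is coanalytic in $X$.

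The core of the argument is then to unfold the coanalytic payoff. Fixing a tree representation of $\mathcal{I}$ witnessing its $\boldsymbol{\Pi}^{1}_{1}$-complexity, one augments each player's move with auxiliary natural-number witnesses encoding finitary rank information through this tree. This yields an enlarged game $\widetilde{G}$ whose payoff set is Borel (built from closed conditions on the witness branches together with the original rule-violation clauses) and whose plays project onto those of $G_{1}(\mathcal{I})$. Martin's Borel determinacy theorem now produces a winning strategy for one of the players in $\widetilde{G}$, and projecting this strategy onto the moves of $G_{1}(\mathcal{I})$ (forgetting the witness coordinates, constructing them internally if needed) gives the desired winning strategy in the original game.

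The principal difficulty is the unfolding step itself. Whereas an analytic condition can be witnessed existentially by a single real played coordinate by coordinate, a coanalytic condition requires witnessing the well-foundedness of a tree, and the players' witness moves must be arranged so that they can genuinely be produced during infinite play without foreknowledge of the opponent's future moves. This is handled by having the witness coordinates play successive rank approximations, and the most technical piece of the argument is verifying that Borel determinacy of $\widetilde{G}$ really descends to $G_{1}(\mathcal{I})$: one has to match rule violations in $\widetilde{G}$ with those in $G_{1}(\mathcal{I})$ correctly, and to check that the extracted strategy actually outputs elements of $\mathcal{I}$ on the Player~I side and elements of $\omega \setminus C_{n}$ on the Player~II side.
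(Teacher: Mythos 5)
First, note that the paper does not prove Theorem \ref{determinacja} at all: it is quoted from \cite[Fact 3.10]{KZ} (see also \cite[Section 5]{KS}), so there is no internal argument to compare yours against. More importantly, your proposed route contains a genuine gap, and it is exactly at the step you yourself flag as ``the most technical piece''. After coding plays as points of a Polish space, the payoff of $G_{1}(\mathcal{I})$ is (a Borel combination with) the preimage of the coanalytic set $\mathcal{I}$ under a Borel map, hence genuinely $\boldsymbol{\Pi}^{1}_{1}$ and in general non-Borel (also, the rule clause ``$C_n\in\mathcal{I}$'' is itself coanalytic, not Borel, as you assert). There is no ZFC unraveling of a coanalytic payoff into a Borel one: Martin's unraveling of $\boldsymbol{\Pi}^{1}_{1}$ sets requires a measurable cardinal, and determinacy of games with analytic/coanalytic payoff is not a theorem of ZFC (by Harrington--Martin it is equivalent to the existence of sharps). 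Concretely, if you let one player supply the tree/rank witnesses in the auxiliary game $\widetilde{G}$, then a winning strategy for that player projects back, but a winning strategy for the opponent does not: the opponent's strategy consults witness moves that simply do not exist in a run of $G_{1}(\mathcal{I})$, and this asymmetry is precisely why the unfolding method alone proves only one half of determinacy in ZFC. Since your argument uses nothing about the payoff beyond its pointclass, if it worked it would prove coanalytic determinacy outright, which is impossible.

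A correct proof must exploit the special combinatorial shape of this particular game, and that is what the cited reference does. By Laflamme's characterization (Theorem \ref{kh777}), determinacy of $G_{1}(\mathcal{I})$ amounts to the dichotomy: every coanalytic ideal is either not weakly Ramsey (Player I wins) or $\omega$-$+$-diagonalizable (Player II wins). The ZFC proof of such dichotomies proceeds by unfolding only the analytic side (Player II's winning condition ``$\{a_n:n\in\omega\}\notin\mathcal{I}$'' is $\boldsymbol{\Sigma}^{1}_{1}$, so the unfolded game is closed and determined by Gale--Stewart) and then converting a strategy for Player I in the unfolded game into the relevant combinatorial property of $\mathcal{I}$ by structure-specific arguments (e.g.\ $\boldsymbol{\Sigma}^{1}_{1}$-boundedness type reasoning), rather than by a general projection of strategies. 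If you want to write out a proof, that is the template to follow; as it stands, your argument does not establish the theorem.
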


An ideal $\mathcal{I}$ is called \emph{$\omega$-$+$-diagonalizable} if there is a countable family $(X_n)_{n\in\omega}\subseteq\mathcal{I}^+$ such that for each $Y\in\mathcal{I}^*$ there is $n\in\omega$ with $X_n\subset Y$ (see \cite{laf}). An ideal $\mathcal{I}$ on $\omega$ is \emph{weakly Ramsey} if for every coloring $f:[\omega]^2\rightarrow 2$, such that for each $x\in \omega$ either $\left\{y\in \omega:\ f\left(\left\{x,y\right\}\right)=0\right\}\in\mathcal{I}$ or $\left\{y\in \omega:\ f\left(\left\{x,y\right\}\right)=1\right\}\in\mathcal{I}$, there is an $\mathcal{I}$-positive $H$ with $f\upharpoonright[H]^2$ constant (this notion was introduced in \cite{laf} in a slightly different way -- the equivalence of the definition from \cite{laf} with the presented one is proved in \cite{K}).

\begin{fact}
\label{monotonicznosc}
The following hold.
\begin{enumerate}
	\item If an ideal $\I$ is $\omega$-$+$-diagonalizable, then so is any ideal $\J\subset\I$. 
	\item If an ideal $\I$ is not weakly Ramsey, then so is any ideal $\J\supset\I$. 
	\item If $\I$ is not weakly Ramsey, then so is $\I\upharpoonright A$ for any $A$.
\end{enumerate}
\end{fact}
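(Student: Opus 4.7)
The plan is to chase definitions via simple monotonicity of the ideal, its dual filter, and its positive sets. The guiding observations are: when $\J \subset \I$ we have $\I^+ \subset \J^+$ and $\J^* \subset \I^*$, and when $\J \supset \I$ we have $\J^+ \subset \I^+$. In all three parts the witness used for $\I$ will transfer verbatim to the new ideal.

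For (1), I would fix a countable family $(X_n)_{n\in\omega} \subset \I^+$ witnessing the $\omega$-$+$-diagonalizability of $\I$, and verify it also works for any $\J \subset \I$. Each $X_n$ remains $\J$-positive by $\I^+ \subset \J^+$, and any $Y \in \J^*$ lies in $\I^*$ (since its complement lies in $\J \subset \I$), so some $X_n \subset Y$ as desired.

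For (2), I would reuse the coloring $f \colon [\omega]^2 \to 2$ that witnesses the failure of the weakly Ramsey property for $\I$ and argue that it also witnesses failure for $\J \supset \I$. The hypothesis on $f$ is preserved since any set lying in $\I$ also lies in $\J$. For the conclusion, any hypothetical $H \in \J^+$ with $f \upharpoonright [H]^2$ constant would lie in $\I^+$ by $\J^+ \subset \I^+$, contradicting the choice of $f$.

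For (3), I would again reuse $f$, this time restricted to $[A]^2$. The premise passes to the restriction because $\{y \in A : f(\{x,y\}) = i\} \subset \{y \in \omega : f(\{x,y\}) = i\} \in \I$ for the appropriate $i$, so the former belongs to $\I \upharpoonright A$. For the conclusion, the key identity is that for $H \subset A$ one has $H \in \I \upharpoonright A$ iff $H \in \I$; hence an $\I \upharpoonright A$-positive monochromatic $H$ for $f$ would yield an $\I$-positive monochromatic set for $f$ in $\omega$, contradicting the choice of $f$ once more. I expect no serious obstacle in any of the three parts; the only mild subtlety is the degenerate case $A \in \I$ in (3), where $\I \upharpoonright A = \mathcal{P}(A)$ has no positive sets and the weakly Ramsey condition fails vacuously for any coloring satisfying the premise.
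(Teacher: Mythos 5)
Your proposal is correct and is exactly the routine definition-chasing that the paper dismisses with the single word ``Straightforward'': the diagonalizing family transfers verbatim in (1), and the witnessing coloring transfers in (2) and (3) via the monotonicity of positive sets and the observation that for $H\subset A$ one has $H\in\I\upharpoonright A$ if and only if $H\in\I$. Nothing further is needed.
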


\begin{proof}
Straightforward.
\end{proof}

Laflamme introduced the notions of $\omega$-$+$-diagonalizability and weak Ramseyness in order to give the following characterization.

\begin{theorem}[Laflamme, {\cite[Theorem 2.7]{laf}}]\label{kh777}
Let $\mathcal{I}$ be an ideal.
\begin{enumerate}
	\item Player I has a winning strategy in $G_{1}(\mathcal{I})$ if and only if the ideal $\mathcal{I}$ is not weakly Ramsey.	
	\item Player II has a winning strategy in $G_{1}(\mathcal{I})$ if and only if the ideal $\mathcal{I}$ is $\omega$-$+$-diagonalizable.
\end{enumerate}
\end{theorem}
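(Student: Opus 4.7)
The plan is to prove each of the four implications separately: both ``constructive'' directions (from a combinatorial property to a winning strategy) are short, while both ``extractive'' directions (from a winning strategy to the combinatorial property) carry the main weight and share the same obstacle.

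For the easy direction of (2), I would fix a witnessing family $(X_n)_{n\in\omega}\subseteq\I^+$ and have Player II, at round $n$, pick any $a_n\in X_n\setminus C_n$, which is nonempty since $X_n\in\I^+$ and $C_n\in\I$. If the resulting $A=\{a_n:n\in\omega\}$ were in $\I$, then $\omega\setminus A\in\I^*$ would contain some $X_k$, contradicting $a_k\in X_k$. For the easy direction of (1), I would fix a coloring $f\colon[\omega]^2\to 2$ witnessing the failure of weak Ramseyness and, for each $x$, choose $c(x)\in 2$ with $D_x=\{y:f(\{x,y\})=c(x)\}\in\I$. Player I's strategy then plays $C_n=\{a_0,\ldots,a_{n-1}\}\cup\bigcup_{i<n}D_{a_i}\in\I$, forcing $f(\{a_i,a_n\})=1-c(a_i)$ for all $i<n$. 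If $A=\{a_n\}$ were $\I$-positive, then one of the two pieces $A\cap c^{-1}\{j\}$ would be an $\I$-positive $f$-homogeneous set of color $1-j$ (since any two elements $a,b$ of $A\cap c^{-1}\{j\}$ satisfy $f(\{a,b\})=1-j$ by construction), contradicting the choice of $f$; hence $A\in\I$ and Player I wins.

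For the two reverse directions, I would first observe that the relevant strategies only depend on the accumulated union of past moves, so we may restrict to plays where Player I's sequence is increasing in $\I$. For (2) $\Rightarrow$, given a winning strategy $\sigma$ for Player II, every $Y\in\I^*$ provides an attack in which Player I plays the constant move $\omega\setminus Y\in\I$; the resulting $\sigma$-responses form an $\I$-positive subset of $Y$. To produce a \emph{countable} witnessing family, I would build a tree of canonical Player I plays indexed by finite sequences of integers, using $\sigma$'s responses to label the branches, and take the family of response sequences along the branches as the desired $(X_n)_{n\in\omega}$. For (1) $\Rightarrow$, given a winning Player I strategy $\sigma'$, I would analogously read off the coloring $f\colon[\omega]^2\to 2$ from how $\sigma'$ reacts to canonical Player II responses, deriving both the pointwise side condition and the absence of an $\I$-positive homogeneous set from $\sigma'$ being winning.

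The main obstacle in both reverse directions will be the countability reduction: the domains of $\sigma$ and $\sigma'$ are subsets of $\I^{<\omega}$, hence a priori uncountable, whereas the combinatorial invariants we are extracting --- an $\omega$-$+$-diagonalizing family and a coloring of $[\omega]^2$ --- are by definition countable objects. The hard step will be to argue that the reparametrization of the game tree by finite sequences from $\omega$ really does catch every $Y\in\I^*$ in direction (2), and really does defeat every potential $\I$-positive homogeneous set in direction (1), so that the countable data read off the strategy genuinely witnesses $\omega$-$+$-diagonalizability, respectively the failure of weak Ramseyness.
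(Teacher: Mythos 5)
The paper does not actually prove this statement: it is quoted verbatim from Laflamme (and even the coloring formulation of weak Ramseyness used here is borrowed, its equivalence with Laflamme's original definition being delegated to \cite{K}), so there is no internal proof to compare against and your outline must be judged on its own. Your two ``constructive'' implications are correct and complete as written. The extraction of $\omega$-$+$-diagonalizability from a winning strategy $\sigma$ of Player II can indeed be closed along the lines you sketch, but it should be done explicitly rather than deferred: build a countable tree of $\sigma$-consistent positions by putting the empty position in the tree and, for each position $p$ already in the tree, setting $X_p=\{\sigma(p^\frown C):\ C\in\I\}$ (this is $\I$-positive, since otherwise Player I could play $C=X_p$ and $\sigma$ would have to answer inside its own forbidden set) and adding, for each $n\in X_p$, the single successor $p^\frown C_{p,n}{}^\frown n$, where $C_{p,n}$ is one chosen witness with $\sigma(p^\frown C_{p,n})=n$. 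If some $Y\in\I^*$ contained no $X_p$, Player I could follow the chosen witnesses and keep all of Player II's answers inside $\omega\setminus Y\in\I$, defeating $\sigma$; hence $\{X_p\}$ is a countable diagonalizing family. Note also that your preliminary reduction ``strategies depend only on the accumulated union of past moves'' is unjustified for Player II's strategy (you cannot modify a given winning strategy this way), but fortunately it is not needed.

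The genuine gap is the implication in (1) from a winning strategy of Player I to the failure of weak Ramseyness. ``Reading off the coloring from how the strategy reacts to canonical responses'' is not an argument, and the obstacle there is not the countability issue you flag: the tree of plays consistent with a strategy of Player I is automatically countable, being indexed by Player II's integer responses. The real difficulty is that Player I's strategy uses the entire finite history of responses, whereas a coloring of $[\omega]^2$ sees only pairs; compressing this memory into a single coloring that both satisfies the one-sided condition $D_x\in\I$ and admits no $\I$-positive homogeneous set is the substantive half of the theorem. In the present paper's framework it amounts to producing an isomorphic copy of $\mathcal{WR}$ inside $\I$ (cf.\ Theorem \ref{WR}), which is precisely the nontrivial content of \cite{K}, not a bookkeeping reduction. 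As it stands, your proposal specifies neither how a pair $\{x,y\}$ is to be colored nor why an $\I$-positive homogeneous set would translate into a play defeating the strategy, so this direction remains unproved.
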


It follows from the above two theorems that any coanalytic ideal either is not weakly Ramsey or is $\omega$-$+$-diagonalizable.

$\mathcal{WR}$ is an ideal on $\omega\times\omega$ generated by vertical lines, i.e., sets of the form $\{n\}\times\omega$ for $n\in\omega$ (which we call \emph{generators of the first type}) and sets $G$ such that for every $(i,j),(k,l)\in G$ either $i>k+l$ or $k>i+j$ (which we call \emph{generators of the second type}). 

\begin{theorem}[Kwela, {\cite[Theorem 1.3]{K}}]
\label{WR}
The following are equivalent for any ideal $\mathcal{I}$ on $\omega$:
\begin{enumerate}
\item $\mathcal{I}$ is not weakly Ramsey;
\item $\mathcal{WR}\sqsubseteq\mathcal{I}$;
\item $\mathcal{WR}\leq_{K}\mathcal{I}$.
\end{enumerate}
\end{theorem}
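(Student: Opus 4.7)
The plan is to establish the cycle $(2)\Rightarrow(3)\Rightarrow(1)\Rightarrow(2)$. The implication $(2)\Rightarrow(3)$ is immediate, since any bijection witnessing $\mathcal{WR}\sqsubseteq\mathcal{I}$ already serves as the required Katětov map.

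For $(3)\Rightarrow(1)$, I plan first to show directly that $\mathcal{WR}$ is not weakly Ramsey by constructing a winning strategy for Player~I in $G_1(\mathcal{WR})$. In the opening round Player~I plays $\{0\}\times\omega$; thereafter, given Player~II's prior responses $(i_0,j_0),\ldots,(i_{n-1},j_{n-1})$, Player~I plays $\{0,1,\ldots,i_{n-1}+j_{n-1}\}\times\omega\in\mathcal{WR}$. This forces $i_0\geq 1$ and $i_n>i_{n-1}+j_{n-1}$ for every $n\geq 1$. Since $(i_n)$ is strictly increasing, $i_n\geq i_{m+1}>i_m+j_m$ whenever $m<n$, so $\{(i_n,j_n):n\in\omega\}$ satisfies the second-type generator condition and lies in $\mathcal{WR}$. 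Next, given a Katětov map $f\colon\omega\to\omega\times\omega$ witnessing $\mathcal{WR}\leq_K\mathcal{I}$, Player~I obtains a winning strategy in $G_1(\mathcal{I})$ by simulating the $\mathcal{WR}$-game: whenever the $\mathcal{WR}$-strategy prescribes a move $C\in\mathcal{WR}$, Player~I plays $f^{-1}[C]\in\mathcal{I}$, and Player~II's response $a_n$ is reinterpreted as the $\mathcal{WR}$-move $f(a_n)$. Since the simulated play is winning, $\{f(a_n):n\in\omega\}\in\mathcal{WR}$, whence $\{a_n:n\in\omega\}\subseteq f^{-1}[\{f(a_n):n\in\omega\}]\in\mathcal{I}$. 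So Player~I wins $G_1(\mathcal{I})$ and, by Theorem~\ref{kh777}, $\mathcal{I}$ is not weakly Ramsey.

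The substantive implication is $(1)\Rightarrow(2)$. Using Theorem~\ref{kh777}, fix a winning strategy $\sigma$ for Player~I in $G_1(\mathcal{I})$, and arrange (by enlarging values if necessary) that every $\sigma(\tau)$ is infinite. The aim is to construct a bijection $f\colon\omega\to\omega\times\omega$ with $f^{-1}[A]\in\mathcal{I}$ for every $A\in\mathcal{WR}$; since $\mathcal{I}$ is closed under finite unions and subsets, it suffices to secure this condition on the two kinds of generators. My plan is a recursive diagonal construction in which, writing $M(i,j)=f^{-1}(i,j)$, each column $\{M(i,j):j\in\omega\}$ is arranged to lie inside $\sigma(\tau_i)\in\mathcal{I}$ for a carefully chosen history $\tau_i\in\omega^{i}$ (supplying the first-type generators), and in which, for every second-type generator $G=\{(i_n,j_n):n\in\omega\}$ enumerated so that $i_0<i_1<\cdots$ and $i_{n+1}>i_n+j_n$, the sequence $(M(i_n,j_n))_{n\in\omega}$ is a legal play of Player~II against $\sigma$, so that $f^{-1}[G]\in\mathcal{I}$ by the winning property of $\sigma$. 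A back-and-forth step guarantees surjectivity onto $\omega$ and injectivity.

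The main obstacle is coordinating both requirements through a single bijection. The gap condition $i_{n+1}>i_n+j_n$ will be the key combinatorial lever: when committing the value $M(i_n,j_n)$ at a given stage, only finitely many prefixes of partial second-type generators through $(i_n,j_n)$ consist entirely of already-committed pairs, so only finitely many $\sigma$-responses must be avoided, and their union is again an element of $\mathcal{I}$. The delicate point will be choosing the canonical histories $\tau_i$ adaptively so that $\sigma(\tau_i)$ is not exhausted by the accumulating forbidden sets and injectivity constraints; the freedom to replace $\sigma$ by a pointwise-larger winning strategy still valued in $\mathcal{I}$ is what should make the recursion go through.
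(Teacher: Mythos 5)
The paper does not prove this statement at all: it is quoted verbatim from Kwela \cite{K} (Theorem~1.3 there), so there is no in-paper argument to compare yours with, and any proof you give must stand on its own. Your implications $(2)\Rightarrow(3)$ and $(3)\Rightarrow(1)$ do stand: the explicit strategy for Player~I in $G_1(\mathcal{WR})$ is correct (the moves $\{0,\ldots,i_{n-1}+j_{n-1}\}\times\omega$ force $i_n>i_m+j_m$ for all $m<n$, so II's responses form a second-type generator), and the pullback of a winning strategy along a Kat\v{e}tov map, combined with Theorem~\ref{kh777}, is a standard and valid transfer.

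The problem is $(1)\Rightarrow(2)$, which is the entire content of the theorem, and there you only offer a plan whose crucial steps you yourself flag as unresolved ("the delicate point will be\dots", "should make the recursion go through"). Concretely: your recursion needs each value $M(i,j)$ to lie \emph{inside} $\sigma(\tau_i)$ (column constraint) while lying \emph{outside} $\sigma(h)$ for every already-committed chain $h$ below $(i,j)$ (legality of the simulated plays). All of these sets are merely elements of $\mathcal{I}$, so nothing prevents $\sigma(\tau_i)\subseteq\bigcup_h\sigma(h)$; you do not show how choosing $\tau_i$ adaptively avoids this, and replacing $\sigma$ by a pointwise-larger winning strategy (which does preserve winning) only enlarges the sets you must avoid, so it works against you rather than for you. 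Bijectivity is a second genuine obstruction, not a routine "back-and-forth step": every position $(i,j)$ is the least element of some infinite second-type generator, so your scheme forces every value $M(i,j)$ to avoid $\sigma(\emptyset)$ (and values accumulate further avoidance requirements as the construction proceeds); hence the range of $M$ omits the infinite set $\sigma(\emptyset)\in\mathcal{I}$ and the map cannot be onto $\omega$. Repairing this requires a structural change — e.g.\ designating a column to absorb $\sigma(\emptyset)$ and the leftover integers, using that a second-type generator meets each column in at most one point so that the correction is finite, and then re-verifying both families of generators — none of which is in your write-up. As it stands, the isomorphic copy of $\mathcal{WR}$ is not constructed, so the key implication remains unproved; for a complete argument one should either carry out such a construction in full or simply cite \cite[Theorem~1.3]{K} as the paper does.
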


\begin{fact}
\label{dense}
Each ideal which is not dense, has to be weakly Ramsey and $\omega$-$+$-diagonalizable. 
\end{fact}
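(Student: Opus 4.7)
The plan is to unfold the negation of density and then directly verify both properties. By the definition on page 3, $\I$ fails to be dense precisely when there exists an infinite set $X\subseteq\bigcup\I$ all of whose infinite subsets are $\I$-positive; equivalently, $\I\upharpoonright X=\fin(X)$. This single set $X$ will witness both conclusions.

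For $\omega$-$+$-diagonalizability I would enumerate $X=\{x_n:n\in\omega\}$ and take the countable family $X_n=X\setminus\{x_0,\ldots,x_{n-1}\}$. Each $X_n$ is an infinite subset of $X$ and is therefore in $\I^+$. Given any $Y\in\I^*$, the set $X\setminus Y$ is contained in the $\I$-small set $\bigcup\I\setminus Y$ and hence lies in $\I$; being also a subset of $X$, it cannot be infinite, so it is contained in some initial segment $\{x_0,\ldots,x_{n-1}\}$, which means $X_n\subseteq Y$ as required.

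For weak Ramseyness I would not even need to use the structural hypothesis on the coloring appearing in the definition. Given any $f\colon[\omega]^2\to 2$, apply the classical infinite Ramsey theorem to the restriction $f\upharpoonright[X]^2$ to obtain an infinite $H\subseteq X$ on which $f$ is constant. Again $H$ is an infinite subset of $X$ and hence $\I$-positive, so it is the desired homogeneous $\I$-positive witness.

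Both arguments are immediate once the non-density hypothesis is unpacked, so no real obstacle arises; this is why the authors dispose of the statement with a one-word proof. The only thing to be attentive to is the observation that any subset of $X$ lying in $\I$ is automatically finite, which is used in both halves and follows at once from $\I\upharpoonright X=\fin(X)$.
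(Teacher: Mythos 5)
Your proof is correct. The $\omega$-$+$-diagonalizability half is essentially the paper's own argument: the authors also pass to a set $A$ with $\I\upharpoonright A$ isomorphic to $\fin$ and take the tails $(A\setminus n)_{n\in\omega}$ as the diagonalizing family, exactly as you do with $X_n=X\setminus\{x_0,\dots,x_{n-1}\}$. For the weak Ramseyness half, however, you take a genuinely different and more elementary route. The paper argues by contraposition through its characterization theorem: if $\I$ were not weakly Ramsey, then $\mathcal{WR}\sqsubseteq\I$ (Theorem~\ref{WR}), and since $\mathcal{WR}$ is dense (a cited lemma from \cite{K}), density pulls back along the isomorphic copy to make $\I$ dense, a contradiction. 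You instead apply the classical infinite Ramsey theorem inside the $\fin$-copy $X$ and observe that the resulting infinite homogeneous $H\subseteq X$ is automatically $\I$-positive; this is self-contained, avoids both Theorem~\ref{WR} and the density of $\mathcal{WR}$, and in fact proves something stronger, namely that a positive homogeneous set exists for \emph{every} coloring, not only for those satisfying the side condition in the definition of weak Ramseyness. What the paper's route buys is economy within its own framework, since Theorem~\ref{WR} is already available and gets reused elsewhere; what yours buys is independence from that machinery. One cosmetic remark: the paper's proof is not "one-word" as you surmised, but a short paragraph invoking the results above; this has no bearing on the correctness of your argument.
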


\begin{proof}
The first statement follows from Theorem \ref{WR} and the fact that the ideal $\mathcal{WR}$ is dense (cf. \cite[Lemma 5.3]{K}). To show the second one, take any ideal $\I$ on $X$ which is not dense and let $A$ be such that $\I\upharpoonright A$ is isomorphic to $\fin$. Then $(A\setminus n)_{n\in\omega}$ is the family $\omega$-$+$-diagonalizing $\I$.
\end{proof}

We are ready to define q-types of pairs of ideals.

\begin{definition}
Let $\I$ and $\J$ be ideals.
\begin{enumerate}
	\item $(\I,\J)$ is of the \emph{first q-type} if for any sequence $(A_n)_{n\in\omega}$ of elements of $\J$ the ideal $\mathcal{I}\sqcup (A_n)_{n\in\omega}$ is $\omega$-$+$-diagonalizable.
	\item $(\I,\J)$ is of the \emph{second q-type} if there is a sequence $(A_n)_{n\in\omega}$ of elements of $\J$ such that the ideal $\mathcal{I}\sqcup (A_n)_{n\in\omega}$ is not weakly Ramsey, but for any $A\in\J$ the ideal $\mathcal{I}\sqcup A$ is $\omega$-$+$-diagonalizable.
	\item $(\I,\J)$ is of the \emph{third q-type} if there is $A\in\J$ such that the ideal $\mathcal{I}\sqcup A$ is not weakly Ramsey.
\end{enumerate}
\end{definition}

\begin{fact}
\label{det}
If $\I$ is coanalytic, then each pair $(\I,\J)$ is of some q-type.
\end{fact}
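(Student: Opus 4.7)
The plan is to dispatch this via a straightforward case analysis, leveraging the dichotomy that any coanalytic ideal is either not weakly Ramsey or $\omega$-$+$-diagonalizable. This dichotomy follows by combining the determinacy of $G_1(\I)$ for coanalytic $\I$ (Theorem \ref{determinacja}) with Laflamme's characterization of winning strategies (Theorem \ref{kh777}), as is noted in the text right after Theorem \ref{kh777}.

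First I would observe the following preservation property: for every $A\subset\omega$ and every sequence $(A_n)_{n\in\omega}\subset\mathcal{P}(\omega)$, the ideals $\I\sqcup A$ and $\I\sqcup (A_n)_{n\in\omega}$ are coanalytic. This is exactly the content of Lemma \ref{1}. Consequently, each of these ideals falls under the dichotomy above.

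Now I would split into cases. Suppose first that for every sequence $(A_n)_{n\in\omega}\subset\J$ the ideal $\I\sqcup(A_n)_{n\in\omega}$ is $\omega$-$+$-diagonalizable; then by definition $(\I,\J)$ is of the first q-type. Otherwise there exists a witness $(A_n)_{n\in\omega}\subset\J$ such that $\I\sqcup(A_n)_{n\in\omega}$ is not $\omega$-$+$-diagonalizable, and since it is coanalytic, the dichotomy forces it to be not weakly Ramsey. Within this situation, split again: if for every $A\in\J$ the ideal $\I\sqcup A$ is $\omega$-$+$-diagonalizable, then $(\I,\J)$ is of the second q-type; in the remaining case there is $A\in\J$ with $\I\sqcup A$ not $\omega$-$+$-diagonalizable, and again by coanalyticity of $\I\sqcup A$ together with the dichotomy this ideal must fail to be weakly Ramsey, so $(\I,\J)$ is of the third q-type.

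No step here is a real obstacle; the only content is the correct bookkeeping of the cases, and the essential ingredients (coanalyticity preservation from Lemma \ref{1}, and the coanalytic dichotomy from Theorems \ref{determinacja} and \ref{kh777}) are already in hand. The proof will therefore be only a few lines long.
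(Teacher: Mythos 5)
Your proof is correct and follows the same route as the paper: coanalyticity of $\I\sqcup A$ and $\I\sqcup (A_n)_{n\in\omega}$ via Lemma \ref{1}, followed by the dichotomy from Theorems \ref{determinacja} and \ref{kh777}. The paper leaves the final case analysis implicit, which you simply spell out.
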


\begin{proof}
It follows from Lemma \ref{1} that $\mathcal{I}\sqcup (A_n)_{n\in\omega}$ and $\mathcal{I}\sqcup A$ are coanalytic for any $(A_n)_{n\in\omega}$ and $A\subset\omega$. Then we are done by Theorems \ref{determinacja} and \ref{kh777}.
\end{proof}

\subsection{Examples}

In this subsection we give examples of pairs of ideals for every q-type. Moreover, we investigate how the ideal $\I$ can determine the q-type of the pair $(\I,\J)$.

First example shows that there is a pair $(\I,\J)$ of the second q-type and that $(\I,\J)$ and $(\J,\I)$ can be of different q-types.

\begin{example}
Let $\I=\emptyset\otimes\fin$ and $\J=\fin\otimes\emptyset$. Then $(\I,\J)$ is of the second q-type. Indeed, $\mathcal{I}\sqcup (\{n\}\times\omega)_{n\in\omega}=\fin\otimes\fin$ and $\mathcal{WR}\sqsubseteq\fin\otimes\fin$. On the other hand, $\mathcal{I}\sqcup A$ is not dense for any $A\in\J$, so it has to be $\omega$-$+$-diagonalizable by Fact \ref{dense}. 

Note also that $(\J,\I)$ is of the first q-type. Indeed, it follows from Fact \ref{dense}, since $\mathcal{J}\sqcup (A_n)_{n\in\omega}$ is not dense for any $(A_n)_{n\in\omega}\subset\I$.
\end{example}

\begin{fact}
\label{zawieranie}
Suppose that $\J\subset\I$. Then:
\begin{itemize}
	\item $(\I,\J)$ is of the first q-type if and only if $\I$ is $\omega$-$+$-diagonalizable;
	\item $(\I,\J)$ is not of the second q-type for any $\J$;
	\item $(\I,\J)$ is of the third q-type if and only if $\I$ is not weakly Ramsey.
\end{itemize}
\end{fact}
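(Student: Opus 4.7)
The plan is to observe that the assumption $\J\subset\I$ makes the operations appearing in the definition of the q-types trivial: both $\mathcal{I}\sqcup A$ for $A\in\J$ and $\mathcal{I}\sqcup(A_n)_{n\in\omega}$ for $(A_n)_{n\in\omega}\subset\J$ collapse to $\I$ itself. Once this is in place, each of the three bullets reduces to a routine unpacking of the definition, with the second bullet requiring one extra input from Laflamme's characterization.

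For the collapse, note first that if $A\in\J\subset\I$ then any $N\subset A$ lies in $\I$, so $M\cup N\in\I$ for every $M\in\I$, giving $\mathcal{I}\sqcup A\subset\I$; the reverse inclusion is immediate by taking $N=\emptyset$. Hence $\mathcal{I}\sqcup A=\I$. An entirely analogous computation, using that $\bigcup_{i<n}A_i\in\I$ for each finite $n$ whenever $(A_n)_{n\in\omega}\subset\J\subset\I$, yields $\mathcal{I}\sqcup(A_n)_{n\in\omega}=\I$.

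Substituting these equalities, the first q-type condition becomes simply that $\I$ is $\omega$-$+$-diagonalizable, and the third q-type condition becomes that there exists $A\in\J$ (any one, e.g.\ $A=\emptyset$) with $\I$ not weakly Ramsey, i.e.\ that $\I$ is not weakly Ramsey. This settles the first and third bullets.

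For the second bullet, the definition, after the collapse, asks for $\I$ to be simultaneously not weakly Ramsey and $\omega$-$+$-diagonalizable. By Theorem \ref{kh777} the former corresponds to Player~I having a winning strategy in $G_1(\I)$ and the latter to Player~II having one; since the two players cannot both have winning strategies in the same game, the conjunction is impossible. Therefore no $\J\subset\I$ places $(\I,\J)$ in the second q-type. The only minor point of care is that this argument does not use determinacy (so no coanalyticity assumption on $\I$ is needed here), merely the impossibility of both players winning, which is immediate from the definitions of winning strategies. I expect no real obstacle beyond recording the collapse of $\sqcup$.
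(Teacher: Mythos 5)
Your proposal is correct: since the paper dismisses this fact with ``Straightforward,'' your write-up simply supplies the intended routine argument, namely that $\J\subset\I$ collapses $\mathcal{I}\sqcup A$ and $\mathcal{I}\sqcup (A_n)_{n\in\omega}$ to $\I$, after which the three bullets are direct, with the second handled via Theorem \ref{kh777} and the observation that both players cannot have winning strategies in $G_1(\I)$ (no determinacy needed). Nothing is missing.
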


\begin{proof}
Straightforward.
\end{proof}

The following example shows that in some cases the q-type of the pair $(\I,\J)$ depends only on $\I$.

\begin{example}
The ideal $\fin$ is such that for any proper ideal $\J$ on $\omega$ (i.e., an ideal which is a proper subset of $\mathcal{P}(\omega)$) the pair $(\fin,\J)$ is of the first q-type (by Fact \ref{dense}, since $\fin\sqcup (A_n)_{n\in\omega}$ is not dense for any $(A_n)_{n\in\omega}\subset\J$). On the other hand, the ideal $\mathcal{WR}$ is such that for any ideal $\J$ (not necessarily proper) the pair $(\mathcal{WR},\J)$ is of the third q-type. 
\end{example}

Now we show that there is no ideal $\I$ such that the pair $(\I,\J)$ is of the second q-type, whatever the ideal $\J$ is like.

\begin{remark}
If $\I$ is an ideal on $I$ such that there is some $\J$ with $(\I,\J)$ of the second q-type, then $(\I,\fin (I))$ is of the first q-type. Therefore, there is no ideal $\I$ such that for any $\J$ the pair $(\I,\J)$ is of the second q-type. Also, there is no $\I$ such that one can find $\J_1$ and $\J_2$ with $(\I,\J_1)$ of the second q-type and $(\I,\J_2)$ of the third q-type, but for any $\J$ the pair $(\I,\J)$ is not of the first q-type.
\end{remark}

\begin{proof}
Straightforward.
\end{proof}

Next examples show that there are ideals $\I$ such that the q-type of the pair $(\I,\J)$ depends on $\J$. We omit detailed arguments, since they are similar to the ones already used in this subsection.

\begin{example}
Consider $\I=\fin\oplus\mathcal{WR}$. 
\begin{itemize}
	\item If $\J=\fin (\omega\oplus\omega^2)$, then $(\I,\J)$ is of the first q-type;
	\item $(\I,\J)$ is not of the second q-type for any $\J$;
	\item if $\J=\mathcal{P}(\omega)\oplus\fin (\omega^2)$, then $(\I,\J)$ is of the third q-type.
\end{itemize}
\end{example}

\begin{example}
Consider $\I=\left(\emptyset\otimes\fin\right)\oplus\mathcal{WR}$.
\begin{itemize}
	\item If $\J=\fin (2\times\omega^2)$, then $(\I,\J)$ is of the first q-type;
	\item if $\J=\left(\fin\otimes\emptyset\right)\oplus\fin (\omega^2)$, then $(\I,\J)$ is of the second q-type;
	\item if $\J=\mathcal{P}(\omega^2)\oplus\fin (\omega^2)$, then $(\I,\J)$ is of the third q-type.
\end{itemize}
\end{example}

\subsection{The first and third q-type}

In this subsection we characterize $(\mathcal{I},\mathcal{J})\left(QC\left(X\right)\right)$ for all pairs of ideals $(\mathcal{I},\mathcal{J})$ of the first or third q-type.

\begin{proposition}
\label{q-type 1}
Suppose that $X$ is a metric Baire space, $\I$ and $\J$ are ideals on $\omega$ and $(\mathcal{I},\mathcal{J})$ is of the first q-type. Then $(\mathcal{I},\mathcal{J})\left(QC\left(X\right)\right)= PWD_0\left(X\right)$.
\end{proposition}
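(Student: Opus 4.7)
The plan is to prove the two inclusions $PWD_0(X) \subseteq (\I,\J)(QC(X))$ and $(\I,\J)(QC(X)) \subseteq PWD_0(X)$ separately; the first-q-type hypothesis will be used only in the second inclusion, and the Baire-space hypothesis only at the very end of the argument.

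For the first inclusion I would invoke Grande's theorem recalled in Section~\ref{Preliminaries}: every $f \in PWD_0(X)$ is a classical discrete limit of some $(f_n) \subset QC(X)$, that is, $(f_n) \xrightarrow{\fin-d} f$. Since $\fin \subseteq \I$, the same sequence satisfies $(f_n) \xrightarrow{\I-d} f$, and Lemma~\ref{2} upgrades this to $(f_n) \xrightarrow{(\I,\J)-e} f$, giving $f \in (\I,\J)(QC(X))$. This inclusion uses neither the q-type assumption nor the Baire-space hypothesis.

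For the reverse inclusion, take $f \in (\I,\J)(QC(X))$ witnessed by $(f_n) \subset QC(X)$ and a sequence of positive reals $(\varepsilon_n) \xrightarrow{\J} 0$. Lemma~\ref{3} supplies sets $A_k \in \J$ for which the ideal $\K := \I \sqcup (A_k)_{k \in \omega}$ satisfies $(f_n) \xrightarrow{\K} f$ pointwise on $X$. The first-q-type hypothesis says exactly that $\K$ is $\omega$-$+$-diagonalizable, so the problem reduces to the following stand-alone claim: \emph{if $\K$ is $\omega$-$+$-diagonalizable and $(g_n) \subset QC(X)$ is pointwise $\K$-convergent to $g$ on a metric Baire space $X$, then $g \in PWD_0(X)$.} The proof of this claim should closely parallel Natkaniec and Szuca's \cite{nat-szuca} classical Baire-category argument, with the family $(X_n)_{n \in \omega} \subseteq \K^+$ witnessing $\omega$-$+$-diagonalizability used to replace ``$\K^*$-many $n$'' by ``all $n$ in some fixed $X_m$''. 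This substitution is precisely the combinatorial content of $\omega$-$+$-diagonalizability.

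Concretely, fix a nonempty open $U \subseteq X$; I want to produce a point of $C_q(g)$ in $U$. For each $x \in U$ and each $\varepsilon > 0$ the set $B(x,\varepsilon) = \{n : |g_n(x) - g(x)| < \varepsilon\}$ lies in $\K^*$, so some $X_{m(x,\varepsilon)}$ is contained in $B(x,\varepsilon)$. Working with the semi-open sets that arise from quasi-continuity of the individual $g_n$'s (for $n$ ranging over a fixed $X_m$), and taking unions and intersections over a countable collection of parameters $m$, rational $\varepsilon$, and rational test values, one applies the Baire category theorem in $X$ to break the dependence of $m$ on $x$ and to produce a nonempty open $V \subseteq U$ together with a point $x_0 \in V$ at which $g$ is quasi-continuous. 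The main obstacle is exactly this last step --- the passage from the pointwise, $x$-dependent selection $m(x,\varepsilon)$ to a uniform choice valid on an open set --- and it is where both the metric Baire-space hypothesis and the selectivity packaged in $\omega$-$+$-diagonalizability are essential. I expect the remaining technicalities to amount to a faithful translation of the Natkaniec--Szuca quasi-continuous Baire-category construction into the ideal setting.
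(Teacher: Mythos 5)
The easy inclusion $PWD_0(X)\subseteq(\I,\J)(QC(X))$ is fine and essentially matches the paper (the paper quotes Natkaniec--Szuca for $\I$-discrete approximation, you quote Grande for $\fin$-discrete approximation and then enlarge the ideal; both finish with Lemma \ref{2}). The problem is your reduction of the hard inclusion to the ``stand-alone claim'': \emph{if $\K$ is $\omega$-$+$-diagonalizable and $(g_n)\subset QC(X)$ is pointwise $\K$-convergent to $g$, then $g\in PWD_0(X)$}. This claim is false. Take $\K=\fin$, which is $\omega$-$+$-diagonalizable (Fact \ref{dense}); by Grande's theorem the pointwise (i.e.\ $\fin$-)limits of quasi-continuous functions on a metric Baire space form exactly $PWD(X)$, and $PWD(\mathbb{R})\setminus PWD_0(\mathbb{R})\neq\emptyset$, so pointwise $\K$-convergence with $\K$ $\omega$-$+$-diagonalizable can only ever give $PWD$, never $PWD_0$. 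The passage from $(\I,\J)$-equal convergence to $\K$-convergence via Lemma \ref{3} throws away precisely the data that the upgrade from $PWD$ to $PWD_0$ requires: the witnessing sequence $(\varepsilon_n)$ and the sets $A_k$ recording its size. In the paper's proof, the diagonalizing family together with \cite[Proposition 3.1]{nat-szuca} yields only $f\in PWD(X)$ (so $C(f)$ is dense); the extra step keeps the estimates $|f_i(x)-f(x)|<\varepsilon_i$, uses Baire category over $C(f)$ to fix one diagonalizing set $D_m$ working on a dense subset of some open $U_0$, and then exploits the fact that an $(\I\sqcup(A_k)_{k\in\omega})$-positive set must meet infinitely many $A_k$'s, hence contains indices $n$ with $\varepsilon_n$ arbitrarily small; quasi-continuity of such an $f_n$ plus continuity of $f$ at a nearby point of the dense set then gives quasi-continuity of $f$ at the given point of $U_0$. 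None of this survives once you replace equal convergence by plain $\K$-convergence.

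A second, independent defect is that your stated target is too weak for $PWD_0$: you aim to produce, inside each nonempty open $U$, a single point of $C_{q}(g)$ (or an open $V\subseteq U$ together with one point $x_0\in V$ of quasi-continuity). That only shows $C_{q}(g)$ is dense, whereas membership in $PWD_0(X)$ means $X\setminus C_{q}(g)$ is \emph{nowhere dense}, i.e.\ every nonempty open $U$ must contain a nonempty open $U_0$ consisting entirely of quasi-continuity points. The paper's argument does exactly this: quasi-continuity is verified at \emph{every} point of the open set $U_0$ produced by the category argument. (A minor further quibble: Grande's theorem as recalled in the preliminaries is stated for metric Baire spaces, so your assertion that the Baire hypothesis is not used in the first inclusion should be tempered.)
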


\begin{proof}
By \cite[Theorem 9]{nat-szuca2}, every $f\in PWD_0\left(X\right)$ is an $\mathcal{I}$-discrete limit of a sequence of quasi-continuous functions. Then, by Lemma \ref{2}, $f\in (\mathcal{I},\mathcal{J})\left(QC\left(X\right)\right)$.

Now we will show that $(\mathcal{I},\mathcal{J})\left(QC\left(X\right)\right)\subset PWD_0\left(X\right)$. Fix a sequence $(f_n)_{n\in\omega}$ of quasi-continuous functions such that $(f_n)_{n\in\omega}\xrightarrow{(\mathcal{I},\mathcal{J})-e} f$ for some $f\in \R^{X}$. Let $(\varepsilon_n)_{n\in\omega}$ be the sequence of positive reals $\mathcal{J}$-convergent to $0$ from the definition of $(f_n)_{n\in\omega}\xrightarrow{(\mathcal{I},\mathcal{J})-e} f$. Then, by Lemma \ref{3}, we have $(f_n)_{n\in\omega}\xrightarrow{\mathcal{I}\sqcup (A_k)_{k\in\omega}} f$, where $A_0=\{n\in\omega:\varepsilon_n\geq 1\}\in\mathcal{J}$ and $A_k=\{n\in\omega:\ \frac{1}{k+1}\leq \varepsilon_n<\frac{1}{k}\}\in\mathcal{J}$ for all $k\geq 1$. Since $(\mathcal{I},\mathcal{J})$ is of the first q-type, $\mathcal{I}\sqcup (A_n)_{n\in\omega}$ is $\omega$-$+$-diagonalizable, so $f\in PWD\left(X\right)$ by \cite[Proposition 3.1]{nat-szuca}. Therefore, the set $C\left(f\right)$ is residual in $X$. Let $(D_n)_{n\in\omega}\subset (\mathcal{I}\sqcup (A_n)_{n\in\omega})^+$ be the family $\omega$-$+$-diagonalizing $\mathcal{I}\sqcup (A_n)_{n\in\omega}$.

We will show that $f$ is in $PWD_0\left(X\right)$, i.e., that $X\setminus C_q(f)$ is nowhere dense. Consider any open and non-empty set $U\subset X$. Since $(f_n)_{n\in\omega}\xrightarrow{(\mathcal{I},\mathcal{J})-e} f$, for every $x\in C\left(f\right)\cap U$ there is $n_{x}$ with $|f_n(x)-f(x)|< \varepsilon_n$ for every $n\in D_{n_{x}}$. Since $X$ is a Baire space, there exists $m\in \omega$ such that the set $C=\left\{x\in C\left(f\right)\cap U:\ n_{x}=m\right\}$ is dense in some open non-empty set $U_{0}\subseteq U$. We have $|f_i(x)-f(x)|< \varepsilon_i$ for every $x\in C$ and every $i\in D_{m}$. Now it is enough to show that $f$ is quasi-continuous in every point from $U_{0}$. 

Fix $x_{0}\in U_{0}$, $\varepsilon>0$ and an open non-empty set $W$ such that $x_{0}\in W$. Without loss of generality we can assume that $W\subseteq U_{0}$. There exists $F\in\mathcal{I}^*\subset(\mathcal{I}\sqcup (A_n)_{n\in\omega})^*$ such that $|f_i(x_{0})-f(x_{0})|< \varepsilon_i$ for every $i\in F$. The set $F\cap D_{m}$ does not belong to $\mathcal{I}\sqcup (A_n)_{n\in\omega}$. In particular, it intersects infinitely many $A_n$'s, so there exists $n\in F\cap D_{m}$ such that $\varepsilon_n<\frac{\varepsilon}{4}$. By quasi-continuity of $f_n$, there exists $t\in W\cap C$ such that $|f_n(t)-f_n(x_{0})|< \frac{\varepsilon}{4}$. Since $f$ is continuous in $t$, there is an open non-empty set $V\subseteq W$ such that $|f(x)-f(t)|< \frac{\varepsilon}{4}$ for every $x\in V$. Then 
$$|f(x)-f(x_{0})|\leq |f(x)-f(t)|+|f(t)-f_{n}(t)|+|f_{n}(t)-f_{n}(x_{0})|+|f_{n}(x_{0})-f(x_{0})|<\varepsilon$$
for every $x\in V$. Therefore, $f$ is quasi-continuous in $x_{0}$.
\end{proof}

\begin{proposition}
\label{q-type 3}
Suppose that $X$ is a metric Baire space, $\I$ and $\J$ are ideals on $\omega$ and $(\mathcal{I},\mathcal{J})$ is of the third q-type. Then $\textrm{Baire}\left(X\right)\subset (\mathcal{I},\mathcal{J})\left(QC\left(X\right)\right)$.
\end{proposition}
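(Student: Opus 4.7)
The plan is to use the third q-type assumption to pick $A\in\J$ with $\I\sqcup A$ not weakly Ramsey, which by Theorem \ref{WR} is equivalent to $\mathcal{WR}\sqsubseteq\I\sqcup A$. This puts $\I\sqcup A$ in the combinatorially rich regime where the appropriate theorem of Natkaniec and Szuca (the analogue of \cite[Theorem 9]{nat-szuca2} in the ``not weakly Ramsey'' case) yields every $f\in\textrm{Baire}(X)$ as an $(\I\sqcup A)$-discrete limit of a sequence of quasi-continuous functions on the metric Baire space $X$. So we start with $(f_n)_{n\in\omega}\subset QC(X)$ with $\{n\in\omega:\,f_n(x)\neq f(x)\}\in\I\sqcup A$ for every $x\in X$.

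The central construction is to modify $(f_n)$ on $A$ and pair it with a tolerance sequence that is large on $A$ and small off $A$. Concretely, put $g_n=f_n$ and $\varepsilon_n=\frac{1}{n+1}$ for $n\notin A$, and $g_n\equiv 0$ and $\varepsilon_n=n+1$ for $n\in A$. Each $g_n$ is quasi-continuous. Also $(\varepsilon_n)_{n\in\omega}\xrightarrow{\J}0$: for any $\delta>0$, the set $\{n:\,\varepsilon_n\geq\delta\}$ is the union of a subset of $A$ with a finite set, hence in $\J$.

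To verify the convergence, fix $x\in X$ and split the set $\{n\in\omega:\,|g_n(x)-f(x)|\geq\varepsilon_n\}$ according to whether $n\in A$ or not. The trace on $A$ is $\{n\in A:\,|f(x)|\geq n+1\}$, which is finite because $|f(x)|$ is a fixed real number. The trace on $\omega\setminus A$ is contained in $\{n\notin A:\,f_n(x)\neq f(x)\}$; writing $\{n\in\omega:\,f_n(x)\neq f(x)\}=M\cup N$ with $M\in\I$ and $N\subseteq A$ forces $\{n\notin A:\,f_n(x)\neq f(x)\}\subseteq M\in\I$. Hence the whole set lies in $\I$, and $(g_n)\xrightarrow{(\I,\J)-e}f$, giving $f\in(\I,\J)(QC(X))$.

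The main obstacle is isolating the right Natkaniec--Szuca input: I need that $\mathcal{WR}\sqsubseteq\I\sqcup A$ alone suffices to realize every Baire function as an $(\I\sqcup A)$-discrete limit of quasi-continuous functions (note that the proposition imposes no descriptive-set-theoretic hypothesis on $\I$). If only a non-discrete $(\I\sqcup A)$-pointwise limit version were available in the literature, the fallback is to pick, by repeated application of the $\mathcal{WR}$-copy inside $\I\sqcup A$, a single approximating sequence whose $n$-th error is controlled by an explicit $\eta_{k(n)}\downarrow 0$ (with $k(n)$ diagonalized along the generators of the $\mathcal{WR}$-copy), and then use $\eta_{k(n)}$ in the role of $\varepsilon_n$ on $\omega\setminus A$; the rest of the argument is unchanged.
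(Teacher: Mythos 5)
Your argument is correct, and its backbone is the same as the paper's: the needed Natkaniec--Szuca input is exactly \cite[Proposition 16]{nat-szuca2} (every Baire function on a metric Baire space is a $\mathcal{WR}$-discrete limit of quasi-continuous functions), and the transfer to the non-weakly-Ramsey ideal is done, as in the paper, by composing with the bijection witnessing $\mathcal{WR}\sqsubseteq\mathcal{I}\sqcup A$ from Theorem \ref{WR}; so the ``main obstacle'' you flag is not an obstacle, and your fallback is unnecessary. Where you genuinely diverge is in how the set $A$ is neutralized: the paper splits $\omega$ into $A$ and $\omega\setminus A$ via the disjoint-sum identity of Lemma \ref{4}, disposes of the $A$-part by orthogonality of $\mathcal{I}\upharpoonright A$ and $\mathcal{P}(A)$ (Lemma \ref{5}), uses Fact \ref{monotonicznosc} to see that $\mathcal{I}\upharpoonright(\omega\setminus A)$ is not weakly Ramsey, and converts discrete to equal convergence by Lemma \ref{2}; you instead work globally with one sequence, putting $g_n\equiv 0$ and a large tolerance $\varepsilon_n=n+1$ on $A$ and $\varepsilon_n=\frac 1{n+1}$ off $A$. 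Your verification is sound ($(\varepsilon_n)\xrightarrow{\J}0$ since $A\in\J$; the exceptional set at each $x$ is a finite set union a trace of an $\I$-set), and in effect you have inlined the proof of Lemma \ref{5} into the construction, which makes the argument slightly more self-contained at the cost of redoing a computation the paper delegates to its lemmas; the paper's decomposition, in turn, is the pattern it reuses elsewhere (e.g.\ in Propositions \ref{q-type 2.2} and \ref{asdf3}), which is what that route buys.
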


\begin{proof}
Since $(\mathcal{I},\mathcal{J})$ is of the third q-type, there is $A\in\J$ such that $\mathcal{I}\sqcup A$ is not weakly Ramsey. Note that in particular $\mathcal{I}\upharpoonright (\omega\setminus A)$ is not weakly Ramsey (by Fact \ref{monotonicznosc}), and 
$$(\mathcal{I},\mathcal{J})\left(QC\left(X\right)\right)=(\mathcal{I}\upharpoonright (\omega\setminus A),\mathcal{J}\upharpoonright (\omega\setminus A))\left(QC\left(X\right)\right)\cap (\mathcal{I}\upharpoonright A,\mathcal{P}(A))\left(QC\left(X\right)\right)$$ 
by Lemma \ref{4} (we assume that $A$ and $\omega\setminus A$ both are infinite -- otherwise, it suffices to consider only one of the intersected families of functions). Since the ideals $\mathcal{I}\upharpoonright A$ and $\mathcal{P}(A)$ are orthogonal, we have $\textrm{Baire}\left(X\right)\subset\mathbb{R}^X\subset (\mathcal{I}\upharpoonright A,\mathcal{P}(A))\left(QC\left(X\right)\right)$ by Lemma \ref{5}. Therefore, it suffices to show that $\textrm{Baire}\left(X\right)\subset (\mathcal{I}\upharpoonright (\omega\setminus A),\mathcal{J}\upharpoonright (\omega\setminus A))\left(QC\left(X\right)\right)$.

Let $f\colon X\to\mathbb{R}$ be a function possessing the Baire property. By Theorem \ref{WR} and \cite[Proposition 16]{nat-szuca2}, there is a sequence $(g_{(n,m)})_{(n,m)\in\omega^2}$ of quasi-continuous functions $\mathcal{WR}$-discretely convergent to $f$. By Theorem \ref{WR}, there also is a bijection $\pi\colon\omega\setminus A\to\omega^2$ with $\pi^{-1}[M]\in\mathcal{I}\upharpoonright (\omega\setminus A)$ for each $M\in\mathcal{WR}$. Define $f_n=g_{\pi(n)}$ for all $n\in\omega\setminus A$. Then each $f_n$ is quasi-continuous and $(f_n)_{n\in\omega\setminus A}\xrightarrow{\mathcal{I}-d} f$, so from Lemma \ref{2} we obtain that $(f_n)_{n\in\omega\setminus A}\xrightarrow{(\mathcal{I},\mathcal{J})-e} f$.

\end{proof}

\subsection{The second q-type}

In this subsection we characterize $(\mathcal{I},\mathcal{J})\left(QC\left(X\right)\right)$ for all pairs of ideals $(\mathcal{I},\mathcal{J})$ of the second q-type.

\begin{proposition}
\label{q-type 2.1}
Suppose that $X$ is a Baire space, $\I$ and $\J$ are ideals on $\omega$ such that $\mathcal{I}\sqcup A$ is $\omega$-$+$-diagonalizable for any $A\in\J$. Then $(\mathcal{I},\mathcal{J})\left(QC\left(X\right)\right)\subset PWD\left(X\right)$.
\end{proposition}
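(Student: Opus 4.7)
The plan is to adapt the proof of Proposition \ref{q-type 1} to the weaker hypothesis. There the ideal $\I\sqcup(A_n)_{n\in\omega}$ is itself $\omega$-$+$-diagonalizable, so a single appeal to \cite[Proposition 3.1]{nat-szuca} yields $f\in PWD(X)$; here only each $\I\sqcup A$ with $A\in\J$ has that property, so I would run the Natkaniec-Szuca argument once at each approximation scale $1/(k+1)$ and then combine across $k$ by Baire category.

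Let $f\in(\I,\J)(QC(X))$ be witnessed by $(f_n)\subset QC(X)$ and positive reals $(\varepsilon_n)$ with $(\varepsilon_n)\xrightarrow{\J}0$. For each $k\in\omega$ I would set $B_k:=\{n\in\omega:\varepsilon_n\geq 1/(k+1)\}\in\J$, so that by hypothesis $\I\sqcup B_k$ is $\omega$-$+$-diagonalizable. A direct calculation in the spirit of Lemma \ref{3} shows that $F_x^k:=\{n:|f_n(x)-f(x)|<1/(k+1)\}$ lies in $(\I\sqcup B_k)^*$ for every $x\in X$: if $n\notin B_k$ then $\varepsilon_n<1/(k+1)$, so $|f_n(x)-f(x)|\geq 1/(k+1)$ forces $n\in\{m:|f_m(x)-f(x)|\geq\varepsilon_m\}\in\I$. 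Applying the quasi-continuity/Baire-category mechanism used in \cite[Proposition 3.1]{nat-szuca} to this single-scale convergence inside the $\omega$-$+$-diagonalizable ideal $\I\sqcup B_k$ produces a dense set $O_k\subset X$ of points at which the oscillation $\omega(f,\cdot)$ is bounded by an absolute constant times $1/(k+1)$; by upper semi-continuity of $\omega(f,\cdot)$ the set $O_k$ is also open.

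Since $X$ is Baire, $\bigcap_{k\in\omega}O_k$ is dense, and every point of this intersection satisfies $\omega(f,\cdot)=0$, hence is a continuity point of $f$. Therefore $C(f)$ is dense and $f\in PWD(X)$.

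The main obstacle is the single-scale step: turning the information that on some dense subset $C_0$ of an open $V_0\subset X$ one has $|f_n(x)-f(x)|<1/(k+1)$ for every $n$ in a positive set $D_{m_0}\in(\I\sqcup B_k)^+$ (obtained by $\omega$-$+$-diagonalizing $F_x^k$ and a standard Baire-category step on $V_0$) into a bound on the oscillation of $f$ on an open subset of $V_0$, rather than merely on $C_0$. The Natkaniec-Szuca argument overcomes this by intersecting $C_0$ with the residual set of points at which every $f_n$ with $n\in D_{m_0}$ is continuous -- residual because a quasi-continuous function on a Baire space has residual continuity set, since $\{y:\omega(f_n,y)\geq\varepsilon\}$ is closed with empty interior by the definition of quasi-continuity -- and then combining continuity of a well-chosen $f_{n_0}$ at a point $x_0\in C_0$ with density of $C_0$ in $V_0$ and upper semi-continuity of the oscillation to propagate the bound from a dense subset to an open subset of $V_0$. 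This is the technical heart of the argument, and I would quote \cite[Proposition 3.1]{nat-szuca} for it rather than redo the estimates in full.
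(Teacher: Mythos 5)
Your reduction to single scales is set up correctly ($F^k_x\in(\I\sqcup B_k)^*$ for every $x$, and the Baire-category step giving a dense subset $C_0$ of some open $W_0$ and a positive set $D_{m_0}$ with $|f_n(x)-f(x)|<1/(k+1)$ for $x\in C_0$, $n\in D_{m_0}$), but the step you yourself flag as the heart of the matter is a genuine gap, and \cite[Proposition 3.1]{nat-szuca} does not supply it. From the single-scale data you only control $f$ on the dense set $C_0$; quasi-continuity lets you propagate bounds on the functions $f_n$ (which are quasi-continuous) from $C_0$ to all of an open set, but it gives no information whatsoever about $f$ at points of $W_0\setminus C_0$, so no bound on $\omega(f,\cdot)$ at any actual point can be extracted this way. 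Your proposed fixes do not close this: intersecting $C_0$ with the residual set of common continuity points of the $f_n$, $n\in D_{m_0}$, may be empty, since $C_0$ is merely dense (possibly meager); and upper semi-continuity of $\omega(f,\cdot)$ only spreads smallness from a point where smallness is already known to a neighbourhood -- here no such point has been produced, so the ``propagation from a dense subset to an open subset'' is circular. Consequently the claim that each scale yields a dense open $O_k$ with $\omega(f,\cdot)\lesssim 1/(k+1)$ is unproved, and the whole $\bigcap_k O_k$ scheme collapses.

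The mechanism in \cite[Proposition 3.1]{nat-szuca}, and in the paper's proof of this proposition, is different and explains why the scale cannot be fixed in advance: one argues by contradiction, using \cite[Lemma 2.1(1)]{nat-szuca} to get reals $\alpha<\beta$ and an open $U$ in which \emph{two} level sets $E=f^{-1}[(-\infty,\alpha)]$ and $F=f^{-1}[(\beta,+\infty)]$ are dense, and only \emph{then} chooses the scale $\varepsilon=(\beta-\alpha)/2$ and the single set $A=\{n:\varepsilon_n\geq\varepsilon\}\in\J$, applying the hypothesis to $\I\sqcup A$. Quasi-continuity pushes the bound $f_i<\alpha+\varepsilon$ (for $i\in D_m\setminus A$) from a dense subset to all of an open $W$, and the contradiction comes from evaluating at a single point $x_0\in W\cap F$, where the dual set $\{i:|f_i(x_0)-f(x_0)|<\varepsilon_i\wedge\varepsilon_i<\varepsilon\}\in(\I\sqcup A)^*$ must meet $D_m$. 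The second dense level set, absent from your sketch, is exactly what replaces the missing control of $f$ off $C_0$. Your scale-by-scale framework could in principle be repaired (if $\{x:\omega(f,x)\le 3/(k+1)\}$ fails to be dense, a Baire argument produces two dense level sets with gap comparable to $1/(k+1)$, after which one runs the same two-level-set contradiction), but once this is done the $O_k$ layer is superfluous: choosing the scale from the failure witness, as the paper does, already uses only one set $A\in\J$ and finishes the proof in one stroke.
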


\begin{proof}
This proof is based on the proof of \cite[Proposition 3.1]{nat-szuca}.

Fix a sequence $(f_n)_{n\in\omega}$ of quasi-continuous functions such that $(f_n)_{n\in\omega}\xrightarrow{(\mathcal{I},\mathcal{J})-e} f$ for some $f\in \R^{X}$. Let $(\varepsilon_n)_{n\in\omega}$ be the sequence of positive reals $\mathcal{J}$-convergent to $0$ from the definition of $(\mathcal{I},\mathcal{J})$-equal convergence. 

Suppose that $f$ is not pointwise discontinuous. By \cite[Lemma 2.1(1)]{nat-szuca}, there are reals $\alpha<\beta$ and an open non-empty set $U\subset X$ such that $E=f^{-1}[(-\infty,\alpha)]$ and $F=f^{-1}[(\beta,+\infty)]$ are both dense in $U$. By shrinking $U$, without loss of generality we can assume that $E\cap W$ is not meager for every open non-empty $W\subset U$. Let $\varepsilon=\frac{\beta-\alpha}{2}$ and $A=\{n\in\omega:\ \varepsilon_n\geq\varepsilon\}\in\J$. Let $(D_n)_{n\in\omega}\subset (\mathcal{I}\sqcup A)^+$ be the family $\omega$-$+$-diagonalizing $\mathcal{I}\sqcup A$. 

For each $x\in U\cap E$ there is $n_{x}$ with $|f_i(x)-f(x)|<\varepsilon_i$ for every $i\in D_{n_{x}}$. Note that $f_i(x)<\alpha+\varepsilon_i$ for every $x\in U\cap E$ and $i\in D_{n_{x}}$. Since $X$ is a Baire space, there exists $m\in \omega$ such that the set $\left\{x\in U\cap E:\ n_{x}=m\right\}$ is dense in some open and non-empty set $W\subseteq U$. Recall that each $f_i$ is quasi-continuous. Therefore, for every $i\in D_{m}\setminus A$ we have $f_i(x)<\alpha+\varepsilon$ for all $x\in W$ (apply the definition of quasi-continuity to $x$, $W$ and $\varepsilon-\varepsilon_i$). 

On the other hand, take any $x_0\in W\cap F$ and note that 
$$C=\{i\in\omega:\ |f_i(x_{0})-f(x_{0})|<\varepsilon_i\ \wedge\ \varepsilon_i<\varepsilon\}\in (\mathcal{I}\sqcup A)^*.$$ 
Hence, there is some $i_0\in D_m\cap C$ and we obtain that $f_{i_0}(x_0)>\beta-\varepsilon=\alpha+\varepsilon$. A~contradiction.
\end{proof}

Now we want to show that $(\mathcal{I},\mathcal{J})\left(QC\left(X\right)\right)\supset PWD\left(X\right)$ for any metric Baire space $X$ provided that $(\mathcal{I},\mathcal{J})$ is of the second q-type. This is the most technical part of our considerations. We will need some lemmas.

\begin{lemma}
\label{lem1}
Let $X$ be a topological space and $f\in\mathbb{R}^X$ be pointwise discontinuous. Then for every $\varepsilon>0$ there are a closed nowhere dense set $N$ and a continuous function $g\colon X\setminus N\to\mathbb{R}$, such that $|f(x)-g(x)|<\varepsilon$ for all $x\in X\setminus N$.
\end{lemma}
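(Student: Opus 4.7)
The plan is to build $N$ as the complement of a maximal disjoint family of ``small oscillation'' open sets, and then define $g$ to be locally constant on the complement of $N$.

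Fix $\varepsilon>0$. Let $\mathcal{V}$ be the collection of all nonempty open sets $V\subseteq X$ such that $\sup_{x,y\in V}|f(x)-f(y)|<\varepsilon$. By a standard Zorn's lemma argument on pairwise disjoint subfamilies of $\mathcal{V}$ ordered by inclusion, I would select a maximal pairwise disjoint subfamily $\mathcal{V}'\subseteq\mathcal{V}$. Set $U=\bigcup\mathcal{V}'$ (open) and $N=X\setminus U$ (closed).

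The first key step is to check that $N$ is nowhere dense. If not, there is a nonempty open $W\subseteq N$; since $f$ is pointwise discontinuous, $C(f)$ is dense, so pick $x_0\in W\cap C(f)$. Continuity of $f$ at $x_0$ furnishes an open neighbourhood $V_0\subseteq W$ of $x_0$ with $|f(y)-f(x_0)|<\varepsilon/2$ for all $y\in V_0$, whence the oscillation of $f$ on $V_0$ is at most $\varepsilon/2<\varepsilon$ and $V_0\in\mathcal{V}$. But $V_0\subseteq W\subseteq N$ is disjoint from every member of $\mathcal{V}'$, contradicting the maximality of $\mathcal{V}'$.

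The second step is to define $g$. For each $V\in\mathcal{V}'$ pick any point $x_V\in V$ and set $g(x)=f(x_V)$ for $x\in V$. Since the sets $V\in\mathcal{V}'$ are pairwise disjoint and open, they form a clopen partition of $X\setminus N=U$ (as subsets of $U$), so $g$ is locally constant, hence continuous on $X\setminus N$. For each $x\in V\in\mathcal{V}'$, we have $|f(x)-g(x)|=|f(x)-f(x_V)|<\varepsilon$ by the defining property of $\mathcal{V}$.

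I do not expect a serious obstacle: the argument is essentially the standard ``maximal disjoint basis of small oscillation sets'' construction, and the only place where pointwise discontinuity is used is to rule out open subsets of $N$ via a continuity point.
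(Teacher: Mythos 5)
Your argument is correct. It is a close cousin of the paper's proof, but the poset you feed to Zorn's lemma is different: the paper works with pairs $(U,h)$, where $U\subseteq X$ is open and $h\colon U\to\mathbb{R}$ is a continuous function with $|f-h|<\varepsilon$ on $U$, ordered by extension, takes a maximal pair $(U,g)$ and shows $X\setminus U$ is nowhere dense by extending $g$ by a constant near a continuity point of $f$; you instead maximize a pairwise disjoint family of open sets on which $f$ has oscillation $<\varepsilon$ and only afterwards define $g$, which comes out locally constant. Both proofs use pointwise discontinuity in exactly the same way (a continuity point inside a hypothetical open subset of $N$ produces a small-oscillation open set contradicting maximality). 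What your version buys is that continuity of $g$ is automatic -- each member of your maximal family is clopen in $U$, so $g$ is locally constant -- whereas in the paper's extension step one must make sure the new piece of the domain is disjoint from the old $U$ (which requires intersecting the neighbourhood of the continuity point with the open subset of $N$); what the paper's version buys is that it produces the approximating function directly as the maximal object, without a separate construction. Either way the statement follows, and your write-up has no gaps.
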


\begin{proof}
We will use the Zorn's lemma. Fix $\varepsilon>0$ and let $\mathbb{P}$ be the family of all pairs $(U,h)$ such that $U$ is an open subset of $X$ and $h\colon U\to\mathbb{R}$ is a continuous function satisfying $|f(x)-h(x)|<\varepsilon$ for all $x\in U$. Observe that $\mathbb{P}$ is non-empty. Indeed, take any $y\in C(f)$. Then there is an open set $U$ containing $y$, such that $|f(y)-f(x)|<\varepsilon$ for any $x\in U$. Define $h\colon U\to\mathbb{R}$ by $h(x)=f(y)$ for all $x\in U$. Then $(U,h)\in\mathbb{P}$.

The order is defined as follows:
$$(U,h)\preceq (U',h') \Longleftrightarrow U\subseteq U'\ \wedge\ h\subseteq h'.$$
It is easy to check that this is a partial order on $\mathbb{P}$. Moreover, if $((U_\alpha,h_\alpha))_{\alpha<\kappa}$ is a chain in $(\mathbb{P},\preceq)$, then $(\bigcup_{\alpha<\kappa}U_\alpha,\bigcup_{\alpha<\kappa}h_\alpha)$ is its upper bound.

By Zorn's lemma, there is some $(U,g)$ maximal in $(\mathbb{P},\preceq)$. It suffices to show that $N=X\setminus U$ is nowhere dense. Suppose otherwise. Then there is an open non-empty $V\subset N$. Take any $y\in V\cap C(f)$. There is an open set $W$ containing $y$, such that $|f(y)-f(x)|<\varepsilon$ for any $x\in W$. Let $U'=U\cup W$ and $g'\colon U'\to\mathbb{R}$ be given by $g'(x)=g(x)$ for $x\in U$ and $g'(x)=f(y)$ for $x\in W$ (recall that $U$ and $W$ are disjoint). Then $g'$ is continuous and $(U,g)\prec (U',g')$. A contradiction with maximality of $(U,g)$. 
\end{proof}

The following two lemmas are crucial in our considerations. The first one is due to Borsik.

\begin{lemma}[Borsik, {\cite[Lemma 1]{Borsik}}]
\label{lem2}
Let $X$ be a metric space. Suppose that $N\subset X$ is a non-empty closed nowhere dense set, and $U\subset X$ is semi-open with $N\subset\overline{U}$. Then there is a sequence of pairwise disjoint non-empty semi-open sets $(G_n)_{n\in\omega}$, such that $\bigcup_{n\in\omega}G_n=U\setminus N$ and $N\subset\overline{G_n}$ for each $n\in\omega$.
\end{lemma}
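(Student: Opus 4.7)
The approach is to reduce the problem to partitioning the open set $W := V \setminus N$, where $V := \operatorname{int} U$, and then extending back to $U \setminus N$ using the semi-openness of $U$. Since $N$ is non-empty and $N \subset \overline{U}$, $U$ is non-empty, hence $V$ is non-empty and $U \subset \overline{V}$. As $N$ is nowhere dense with $N \subset \overline{V}$, the open set $W$ is dense in $V$ and satisfies $N \subset \overline{W}$.

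The main task is to exhibit pairwise disjoint non-empty semi-open sets $(H_n)_{n \in \omega}$ with $\bigcup_n H_n = W$ and $N \subset \overline{H_n}$ for every $n$. I would work with the $1$-Lipschitz function $\phi(x) := d(x, N)$, which vanishes exactly on $N$. Pick a strictly decreasing sequence $1 = s_0 > s_1 > \dots \to 0$ of positive reals and form the open annuli $D_k := V \cap \phi^{-1}((s_{k+1}, s_k))$ for $k \geq 0$ together with $D_{-1} := V \cap \phi^{-1}((s_0, \infty))$; these are pairwise disjoint open subsets of $W$. Fix a partition $\{-1, 0, 1, \dots\} = \bigcup_n I_n$ into countably many infinite sets, each unbounded above, and provisionally set $H_n^{\circ} := \bigcup_{k \in I_n} D_k$. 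For each $x \in N$ and $\varepsilon > 0$ the open set $W \cap B(x, \varepsilon)$ is non-empty, and in the generic situation one finds inside it a point $y$ with $\phi(y)$ strictly between some consecutive cut-offs $s_{k+1}$ and $s_k$ for arbitrarily large $k \in I_n$, yielding $N \subset \overline{H_n^{\circ}}$.

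The residue $W \setminus \bigcup_n H_n^{\circ}$ lies on the union of level sets $\phi^{-1}(\{s_k\})$ and must be absorbed without breaking semi-openness. In a ``nice'' metric space every level-set point sits in the closure of an adjacent annulus and can simply be attached to whichever $H_n^{\circ}$ contains that annulus; in pathological situations (for example ultrametric-like geometry, where some open annuli may be empty), level sets themselves can be open in $V$ and can be declared additional semi-open pieces and distributed among the $H_n$'s arbitrarily. After this absorption each resulting $H_n$ satisfies $H_n^{\circ} \subset \operatorname{int} H_n$ and $H_n \subset \overline{H_n^{\circ}}$, hence is semi-open, pairwise disjoint by construction, non-empty, with $N \subset \overline{H_n}$.

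It remains to extend to $U \setminus N$. Any $y \in (U \setminus N) \setminus W = U \setminus V$ satisfies $y \in \overline{V} \setminus V \subset \overline{W}$, hence $y \in \overline{H_n}$ for some $n$; attach each such $y$ to one such $H_n$ (e.g.\ the least-indexed one) to produce $G_n \supseteq H_n$. Then $G_n \subset \overline{H_n^{\circ}} \subset \overline{\operatorname{int} G_n}$ gives semi-openness of $G_n$, and disjointness, coverage, non-emptiness, and $N \subset \overline{G_n}$ are inherited from the $H_n$'s. The principal obstacle is precisely the absorption of the level-set residue in metric spaces where the range $\phi(V)$ meshes badly with the chosen cut-offs: a successful argument either tunes $(s_k)$ so as to avoid $\phi(V)$ when possible, or exploits openness of individual level sets in ultrametric-type situations, and either way requires the delicate case analysis that is surely the technical heart of Borsik's proof.
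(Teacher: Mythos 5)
The paper does not prove this lemma at all --- it is quoted verbatim from Borsik's paper (\cite[Lemma 1]{Borsik}) and used as a black box --- so there is no in-paper argument to compare with; your proposal has to stand on its own, and as written it does not. The reduction to $W=\operatorname{int}U\setminus N$ and the final re-attachment are the easy parts; the actual content of the lemma is precisely the step you defer. Concretely: (i) with a fixed sequence of cut-offs $(s_k)$ there is no guarantee that any of the open annuli $D_k$ is non-empty near a given point of $N$ --- the values $\phi(y)=d(y,N)$ for $y\in W$ close to $x\in N$ may all lie on the level values $s_k$ (your own example of a space like $\{0\}\cup\{2^{-j}:j\in\omega\}$), and this can happen near some points of $N$ and not others, so ``tuning $(s_k)$ to avoid $\phi(V)$'' is not available as a uniform fix; (ii) a point with $\phi(y)=s_k$ need not lie in the closure of any adjacent annulus, nor need the level set be open at $y$, so the proposed absorption has no argument behind it; consequently non-emptiness of the pieces, semi-openness after absorption, the exact covering of $W$, and $N\subset\overline{H_n}$ are all unestablished in general. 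Acknowledging that this is ``the technical heart of Borsik's proof'' is accurate, but it means the proof is missing exactly where it matters.

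There is also a genuine logical error in the last step: from $y\in\overline{W}=\overline{\bigcup_n H_n}$ you infer $y\in\overline{H_n}$ for some single $n$, but closure does not commute with infinite unions (think of $H_n=(\tfrac{1}{n+2},\tfrac{1}{n+1})$ and $y=0$), and what you actually need in order to keep $G_n$ semi-open is the stronger statement $y\in\overline{\operatorname{int}H_n}$. So the attachment of the points of $U\setminus\operatorname{int}U$ requires its own argument, presumably built into the construction of the pieces themselves (e.g.\ arranging that every point of $\overline{W}\setminus N$ that must be covered already lies in $\overline{\operatorname{int}H_n}$ for a prescribed $n$). A minor further slip: the set to be attached is $U\setminus(\operatorname{int}U\cup N)$, not $U\setminus\operatorname{int}U$, since points of $N$ must be excluded. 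In summary, the skeleton (pass to the interior, split off $N$, partition, re-attach the boundary part of $U$) is reasonable, but the annulus-based partition is not shown to work in an arbitrary metric space, and the final inference is invalid as stated.
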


\begin{corollary}
\label{wn}
Let $X$ be a metric space. Suppose that $N, M\subset X$ are non-empty closed nowhere dense sets with $N\subset M$, and $G\subset X$ is semi-open with $M\subset\overline{G}$. Then there are two disjoint non-empty semi-open sets $V$ and $W$ such that:
\begin{itemize}
	\item $N\subset\overline{V}$;
	\item $M\subset\overline{W}$;
	\item $V\subset G\setminus M$;
	\item $V\cup W=G\setminus N$.
\end{itemize}
\end{corollary}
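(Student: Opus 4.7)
The plan is to apply Lemma~\ref{lem2} once, to the pair $(M,G)$, and then split the resulting partition into two pieces. Since $M$ is a non-empty closed nowhere dense set and $G$ is semi-open with $M\subset\overline{G}$, Borsik's lemma produces pairwise disjoint non-empty semi-open sets $(B_n)_{n\in\omega}$ satisfying $\bigcup_{n\in\omega}B_n=G\setminus M$ and $M\subset\overline{B_n}$ for every $n\in\omega$. I then simply set $V=B_0$ and $W=(G\setminus N)\setminus B_0$.

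Most of the required properties are then immediate. The set $V$ is non-empty and semi-open by Borsik, satisfies $V\subset B_0\subset G\setminus M\subset G\setminus N$ (using $N\subset M$), and $N\subset M\subset\overline{B_0}=\overline{V}$. By construction $V\cap W=\emptyset$ and $V\cup W=G\setminus N$. The set $W$ is non-empty because $B_1\subset W$, and $M\subset\overline{B_1}\subset\overline{W}$.

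The only step that takes real work is showing that $W$ itself is semi-open. I would first rewrite
\[
W \;=\; \bigl[(G\cap M)\setminus N\bigr]\,\cup\,\bigcup_{n\geq 1}B_n,
\]
using that $G\setminus N=[(G\cap M)\setminus N]\cup(G\setminus M)$ together with the fact that each $B_n\subset G\setminus M$ is disjoint from $N$. Then I would verify $W\subset\overline{\mathrm{int}(W)}$ by cases. For $x\in B_n$ with $n\geq 1$, semi-openness of $B_n$ gives $x\in\overline{\mathrm{int}(B_n)}\subset\overline{\mathrm{int}(W)}$. For $x\in(G\cap M)\setminus N$ and any open neighborhood $U$ of $x$, the fact that $x\in M\subset\overline{B_1}$ forces $U\cap B_1\neq\emptyset$. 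Since $U\cap B_1$ is a non-empty intersection of an open set with a semi-open set, it is itself semi-open and hence has non-empty interior; this interior is contained in $U\cap\mathrm{int}(B_1)\subset U\cap\mathrm{int}(W)$, so $x\in\overline{\mathrm{int}(W)}$.

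The main obstacle is precisely this verification of semi-openness of $W$: the ``bad'' piece $(G\cap M)\setminus N$ involves intersecting a semi-open set with a closed set, an operation that in general destroys semi-openness. What saves the argument is that the extra pieces $B_n$ with $n\geq 1$ accumulate on all of $M$ (by Borsik's lemma), which is exactly what is needed to push the troublesome points of $(G\cap M)\setminus N$ into the closure of $\mathrm{int}(W)$.
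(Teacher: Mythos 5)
Your proof is correct and follows essentially the same route as the paper: apply Lemma~\ref{lem2} to $M$ and $G$, take $V$ to be one piece of the resulting partition and $W$ to be the remainder of $G\setminus N$. The only differences are cosmetic — you write the leftover part of $W$ as $(G\cap M)\setminus N$ (which is actually slightly cleaner with respect to the requirement $V\cup W=G\setminus N$ than the paper's $M\setminus N$) and you spell out the semi-openness of $W$, which the paper merely asserts.
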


\begin{proof}
Apply Lemma \ref{lem2} to $M$ and $G$ to get a sequence of pairwise disjoint non-empty semi-open sets $(G_n)_{n\in\omega}$ such that $\bigcup_{n\in\omega}G_n=G\setminus M$ and $M\subset\overline{G_n}$ for each $n\in\omega$. Let $V=G_0$ and $W=\bigcup_{n>0}G_n\cup (M\setminus N)$. Note that $W$ is semi-open. Then $V$ and $W$ are as needed.
\end{proof}

\begin{lemma}
\label{lem3}
Let $X$ be a metric space. Suppose that $\I$ is an ideal on $\omega$ such that there are a partition $(A_n)_{n\in\omega}$ of $\omega$ and a function $\phi\colon \omega\to\omega$ satisfying:
\begin{itemize}
	\item[(a)] $\phi (p)>k$ for all $p\in A_k$ and $k\in\omega$;
	\item[(b)] $\left(\forall_{n\in \omega}\ p_{n+1}\in\bigcup_{i\geq\phi(p_n)}A_i\right)\Rightarrow \{p_n:n\in\omega\}\in\I$ for any $(p_n)_{n\in\omega}\subset \omega$.
\end{itemize}
Then $PWD\left(X\right)\subset (\mathcal{I},\mathcal{J})\left(QC\left(X\right)\right)$, where $\J$ is the ideal generated by $(A_n)_{n\in\omega}$.
\end{lemma}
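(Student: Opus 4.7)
The plan is to construct, for each $n \in \omega$, a quasi-continuous function $f_n \colon X \to \mathbb{R}$ together with an exceptional set $B_n \subseteq X$ such that $|f_n(x) - f(x)| < \varepsilon_n$ on $X \setminus B_n$, where $\varepsilon_n = 1/(k+1)$ for $n \in A_k$. For each $\varepsilon > 0$ the set $\{n : \varepsilon_n \geq \varepsilon\}$ is contained in a finite union $\bigcup_{i \leq K} A_i$, which lies in $\mathcal{J}$, so $(\varepsilon_n)_{n\in\omega} \xrightarrow{\mathcal{J}} 0$. It therefore suffices to arrange that, for every $x \in X$, the set $S_x := \{n : x \in B_n\}$ belongs to $\mathcal{I}$, for then $(f_n)_{n\in\omega} \xrightarrow{(\mathcal{I},\mathcal{J})-e} f$ and $f \in (\mathcal{I},\mathcal{J})(QC(X))$.

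The key design requirement on $(B_n)_{n\in\omega}$ is the following disjointness pattern: for distinct $n = a_k^j \in A_k$ and $m = a_{k'}^{j'} \in A_{k'}$ with $k \leq k' < \phi(n)$, one has $B_n \cap B_m = \emptyset$ (here $A_k$ is enumerated as $\{a_k^j : j\}$, repeating the last element if $A_k$ is finite). Granted this, each $A_k$ meets $S_x$ in at most one element, so $S_x$ can be enumerated in strictly increasing layer order as $p_0, p_1, \ldots$ with $p_i \in A_{k_i}$ and $k_0 < k_1 < \cdots$; the cross-layer disjointness rules out $k_i < k_{i+1} < \phi(p_i)$, forcing $p_{i+1} \in \bigcup_{l \geq \phi(p_i)} A_l$, and hypothesis (b) then delivers $S_x \in \mathcal{I}$.

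To realise the pattern, for each $l \geq 0$ I would apply Lemma \ref{lem1} with $\varepsilon = 1/(2l+2)$ to obtain a closed nowhere dense $N_l$ and a continuous $g_l \colon X \setminus N_l \to \mathbb{R}$ with $|f - g_l| < 1/(2l+2)$, and then replace each $N_l$ by $\bigcup_{i \leq l} N_i$ so that $N_0 \subseteq N_1 \subseteq \cdots$. For $n = a_k^j$ with $\phi(n) = l$ I would then place $B_n$ inside the annulus $N_l \setminus N_{l-1}$ (with $N_{-1} = \emptyset$), selecting pairwise disjoint pieces inside each annulus for the different indices $n$ with $\phi(n) = l$. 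The nesting automatically supplies the cross-annulus disjointness: $B_n \subseteq N_l \setminus N_{l-1}$ and $B_m \subseteq N_{l'} \setminus N_{l'-1}$ are disjoint whenever $l \neq l'$. The function $f_n$ is then defined by $f_n = g_l$ on $X \setminus N_l$ and extended across $N_l$ using the semi-open partition of $X \setminus N_l$ from Lemma \ref{lem2} (with $N = N_l$, $U = X \setminus N_l$) together with the refinement provided by Corollary \ref{wn}; on $N_l \setminus B_n$ the extension is chosen within $\varepsilon_n$ of $f$ by picking cluster values of $g_l$ along a semi-open direction accumulating at each such point.

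The principal obstacle is the final step: carrying out the annulus-piece partition and, simultaneously, producing a quasi-continuous extension of $g_l$ across $N_l$ that preserves the approximation $|f_n - f| < \varepsilon_n$ off $B_n$ while making $B_n$ precisely the prescribed piece of $N_l \setminus N_{l-1}$. Lemma \ref{lem2} and Corollary \ref{wn} furnish the semi-open accumulation structure needed to piece the extension together, and quasi-continuity of $f_n$ then follows from the semi-open-preimage characterization and the closure properties of semi-open sets under unions and intersections with open sets; but coordinating all these choices across every layer $l$ and every $n$ with $\phi(n) = l$, so that the designed disjointness of $(B_n)$ actually holds, is the technical heart of the argument.
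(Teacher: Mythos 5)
Your reduction of the lemma to a combinatorial disjointness pattern on the exceptional sets is exactly the shape of the paper's argument (each layer $A_k$ meets $S_x$ at most once, and cross-layer disjointness below $\phi$ feeds condition (b)). The genuine gap is in the realization: confining $B_n$ to the annulus $N_{\phi(n)}\setminus N_{\phi(n)-1}$ cannot work, because the points that must be exempted from the approximation are precisely the discontinuity points of $f$ inside the sets $N_l$, and your scheme excludes them from every $B_n$. Concretely, take $X=\mathbb{R}$ and $f=10^6\cdot\chi_{\{0\}}$, which is pointwise discontinuous; any $N_l$ produced by Lemma \ref{lem1} with $\varepsilon\leq 1/2$ must contain $0$, while $\phi(n)\geq 1$ forces $0\in N_{\phi(n)-1}$, so $0\notin B_n$ for every $n$. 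You then require both $|f_n(0)-f(0)|<\varepsilon_n$ (so $f_n(0)$ is near $10^6$) and quasi-continuity of $f_n$ at $0$. But every non-empty open set near $0$ meets $X\setminus N_{\phi(n)}$, where $f_n=g_{\phi(n)}$ has absolute value below $1/2$, and on $N_{\phi(n)}\setminus B_n$ your extension is within $\varepsilon_n\leq 1$ of $f\equiv 0$; hence no non-empty open set near $0$ carries values near $10^6$ and quasi-continuity at $0$ fails. The same impossibility afflicts the step "extension chosen within $\varepsilon_n$ of $f$ by picking cluster values of $g_l$": whenever $f(x)$ is far from all values of $f\upharpoonright(X\setminus N_l)$ near $x$ (the typical situation on $N_l$), no cluster value of $g_l$ is within $\varepsilon_n$ of $f(x)$. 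So the obstacle you flag as "technical" is in fact fatal for this placement of the $B_n$'s.

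The missing idea is that the exceptional sets must be pushed out of the nowhere dense sets, not into them: they have to be semi-open subsets of the complement that cluster along $N_k$. The paper's construction, for $n\in A_k$, sets $f_n=f$ on $N_k$ (so there is no error on $N_k$ at all) and restores quasi-continuity at points of $N_k$ by carving out, for every rational $q_m$, a semi-open set $V^k_{n,m}$ with $N_k\subseteq\overline{V^k_{n,m}}$ on which $f_n\equiv q_m$; elsewhere $f_n=g_k$. The exceptional set is $\bigcup_m V^k_{n,m}$, kept disjoint from $N_{\phi(n)}$ via Corollary \ref{wn} so that later layers can still accumulate on the larger $N$'s, and the in-layer and cross-layer disjointness that your combinatorial reduction needs is enforced by running the layer-$(k+1)$ construction inside $U_{k+1}$, the complement of all previously chosen $V^j_{n,m}$ with $\phi(n)>k$ (Lemma \ref{lem2} plus semi-openness bookkeeping). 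With that replacement your concluding combinatorial step goes through essentially as you wrote it.
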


\begin{proof}
Fix any pointwise discontinuous function $f\in\mathbb{R}^X$. Define $\varepsilon_i=\frac{1}{k+1}$ for all $i\in A_k$ and $k\in\omega$. It is easy to see that $(\varepsilon_i)_{i\in \omega}$ is $\J$-convergent to $0$. For each $k\in\omega$ apply Lemma \ref{lem1} to $f$ and $\varepsilon=\frac{1}{k+1}$ to get $N_k\subset X$ and $g_k\colon X\setminus N_k\to\mathbb{R}$ with the required properties. Without loss of generality we can assume that $\emptyset\neq N_0\subset N_1\subset \ldots$. Let also $\{q_n:\ n\in\omega\}$ be an enumeration of $\mathbb{Q}$.

In order to define a sequence of functions $(f_n)_{n\in \omega}$ which $(\I,\J)$-converges to $f$, we need to inductively construct auxiliary semi-open sets $G_{n,m}^k$, $V_{n,m}^k$ and $W_{n,m}^k$ for all $k,m\in\omega$ and $n\in A_k$. 

The induction is on $k$. We start with $k=0$.
\begin{itemize}
	\item Apply Lemma \ref{lem2} to $N_0$ and the semi-open set $U_0=X$ to get non-empty pairwise disjoint semi-open sets $G_{n,m}^0$ for all $n\in A_0$ and $m\in\omega$. 
	\item For each $n\in A_0$ and $m\in\omega$ apply Corollary \ref{wn} to $N_0$, $\overline{G_{n,m}^0}\cap N_{\phi(n)}$ (note that this set is closed and nowhere dense) and $G_{n,m}^0$ to get two disjoint non-empty semi-open sets $W_{n,m}^0$ and $V_{n,m}^0$. 
\end{itemize}

Suppose now that $G_{n,m}^j$'s, $V_{n,m}^j$'s and $W_{n,m}^j$'s for all $m\in\omega$, $n\in A_k$ and $j\leq k$ are already defined. Let
$$U_{k+1}=X\setminus\bigcup_{j\leq k}\bigcup_{m\in\omega}\bigcup_{\substack{n\in A_j\\ \phi(n)> k}}V_{n,m}^j.$$
Note that $N_{k+1}\subset U_{k+1}\subset\overline{U_{k+1}}$. Indeed, if there would be $x\in N_{k+1}\cap V_{n,m}^j$ for some $j\leq k$, $m\in\omega$ and $n\in A_j$ with $\phi(n)> k$, then $x\in\overline{G_{n,m}^j}\cap N_{\phi(n)}$, but this set is disjoint with $V_{n,m}^j$ (cf. Corollary \ref{wn}). Moreover, $U_{k+1}$ is semi-open as a union of semi-open sets:
$$U_{k+1}=\bigcup_{m\in\omega}\bigcup_{n\in A_{k}}\left(W_{n,m}^k\cup N_k\right)\cup\bigcup_{j<k}\bigcup_{m\in\omega}\bigcup_{\substack{n\in A_j\\ \phi(n)=k}}V_{n,m}^j$$
(the sets $W_{n,m}^k\cup N_k$ are semi-open, since $N_k\subset\overline{G_{n,m}^{k}}\cap N_{\phi(n)}\subset \overline{W_{n,m}^k}\subset\overline{{\rm int}(W_{n,m}^k\cup N_k)}$ for each $m\in\omega$ and $n\in A_{k}$).
\begin{itemize}
	\item Apply Lemma \ref{lem2} to $N_{k+1}$ and $U_{k+1}$ to get non-empty pairwise disjoint semi-open sets $G_{n,m}^{k+1}$ for all $n\in A_{k+1}$ and $m\in\omega$. 
	\item For each $n\in A_{k+1}$ and $m\in\omega$ apply Corollary \ref{wn} to $N_{k+1}$, $\overline{G_{n,m}^{k+1}}\cap N_{\phi(n)}$ and $G_{n,m}^{k+1}$ to get two disjoint non-empty semi-open sets $W_{n,m}^{k+1}$ and $V_{n,m}^{k+1}$. 
\end{itemize}

Now we proceed to the construction of $f_n$'s. Set any $n\in \omega$ and let $k$ be such that $n\in A_k$. Define $f_n\colon X\to\mathbb{R}$ by
$$f_n\left(x\right)=\left\{\begin{array}{ll}
f(x) & \mbox{\boldmath{if }} x\in N_k,\\
q_m & \mbox{\boldmath{if }} x\in V_{n,m}^k,\\
g_k(x) & \mbox{\boldmath{otherwise.}}\\
\end{array}\right.$$

We will show that $f_n$ is quasi-continuous. Take any $x\in X$, $\varepsilon>0$ and an open set $W\ni x$. There are three possible cases:
\begin{itemize}
	\item If $x\in N_k$, then there is $m\in\omega$ with $q_m\in (f_n(x)-\varepsilon,f_n(x)+\varepsilon)$. Since $\overline{V_{n,m}^k}\supset N_k$, the set $V_{n,m}^k\cap W$ is non-empty and semi-open. Hence, $W'=\textrm{int} (V_{n,m}^k\cap W)\neq\emptyset$ and $|f_n(x')-f_n(x)|<\varepsilon$ for each $x'\in W'$.
	\item If there is $m\in\omega$ such that $x\in V_{n,m}^k$, then $W'={\rm int}(V_{n,m}^k\cap W)\neq\emptyset$ and $f_n(x')=f_n(x)$ for each $x'\in W'$.
	\item If $x\in X\setminus (N_k\cup\bigcup_{m\in\omega}V_{n,m}^k)$, then $f_n(x)=g_k(x)$ and, by continuity of $g_k$, there is an open neighbourhood $W'\subset W$ of $x$ such that $|f_n(x)-g_k(x')|<\varepsilon$ for all $x'\in W'$. There is also a semi-open set $H$ containing $x$ ($H$ is either one of the $W_{n,m}^k$'s for $m\in\omega$ or one of the $G_{l,m}^k$'s for $l\in A_k\setminus\{n\}$ and $m\in\omega$, or one of the $V_{l,m}^j$'s for $j<k$, $l\in A_j$ with $\phi(l)>k$ and $m\in\omega$). Then, similarly as above, $W''={\rm int}(H\cap W')\neq\emptyset$ and $|f_n(x)-f_n(x')|<\varepsilon$ for each $x'\in W''$ since $f_n\upharpoonright W''=g_k\upharpoonright W''$.
\end{itemize}

Since all $f_n$'s are defined, we are ready to prove that $(f_n)_{n\in\omega}\xrightarrow{(\I,\J)-e} f$. Fix any $x\in X$ and denote
$$P_x=\bigcup_{k\in\omega}\{n\in A_k:\ x\in V^k_{n,m}\textrm{ for some }m\in\omega\}.$$
Observe that $\{n\in\omega:\ |f_n(x)-f(x)|\geq\varepsilon_n\}\subset P_x$. Hence, it suffices to show that $P_x\in\I$.

Given $k\in\omega$, the sets $V^k_{n,m}$ for $n\in A_{k}$, $m\in\omega$ are pairwise disjoint, so $|\{n\in A_k: x\in V^k_{n,m}\textrm{ for some }m\in\omega\}|\leq 1$. If $P_x$ is finite, then we are done, so suppose that it is infinite and let $\{p_0,p_1,\ldots\}$ be an enumeration of the set $P_x$ such that $k(i+1)>k(i)$ for all $i\in\omega$, where $k(i)$ is defined by $p_i\in A_{k(i)}$.

We will use the condition (b). Fix some $i\in\omega$. If $x\in V^{k(i)}_{p_i,m}$ for some $m\in\omega$, then $x\notin V^k_{n',m'}$ for all $k(i)<k<\phi(p_i)$, $n'\in A_k$ and $m'\in\omega$ (since $U_k\cap  V^{k(i)}_{p_i,m}=\emptyset$ and $V^k_{n',m'}\subset U_k$). Therefore, $p_{i+1}\in \bigcup_{j\geq\phi(p_i)}A_j$. Now it follows from the condition (b) that $P_x\in\I$. This finishes the entire proof.
\end{proof}

Now we proceed to the main aim of this subsection.

\begin{proposition}
\label{q-type 2.2}
Suppose that $\I$ and $\J$ are ideals on $\omega$ such that there is a sequence $(A_n)_{n\in\omega}$ of elements of $\J$ with $\mathcal{I}\sqcup (A_n)_{n\in\omega}$ not weakly Ramsey. Then we have $PWD\left(X\right)\subset (\mathcal{I},\mathcal{J})\left(QC\left(X\right)\right)$ for any metric Baire space $X$.
\end{proposition}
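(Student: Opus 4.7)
The plan is to apply Lemma \ref{lem3}. If I can produce a partition $(B_k)_{k\in\omega}$ of $\omega$ with each $B_k\in\J$, together with a function $\phi\colon\omega\to\omega$ satisfying conditions (a) and (b) of Lemma \ref{lem3} relative to $\I$, then Lemma \ref{lem3} gives $PWD(X)\subset(\I,\J_0)(QC(X))$, where $\J_0$ is the ideal generated by $(B_k)_{k\in\omega}$. Since $B_k\in\J$ forces $\J_0\subset\J$, the monotonicity lemma preceding Lemma \ref{4} then yields $(\I,\J_0)(QC(X))\subset(\I,\J)(QC(X))$, which is the desired conclusion.

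A preliminary reduction brings $(A_n)_{n\in\omega}$ to a partition of $\omega$ into infinite sets in $\J$: replacing $A_n$ by $A_n\setminus\bigcup_{i<n}A_i$ preserves $\I\sqcup(A_n)_{n\in\omega}$ and makes the sets pairwise disjoint; finite $A_n$'s are absorbed into infinite ones; and the complement $C=\omega\setminus\bigcup_n A_n$ is handled by splitting via Lemma \ref{4}, since on $C$ the restricted ideal $(\I\sqcup(A_n)_{n\in\omega})\upharpoonright C=\I\upharpoonright C$ is still not weakly Ramsey by Fact \ref{monotonicznosc}(3), so the same construction can be applied there.

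By non-weak-Ramseyness of $\I\sqcup(A_n)_{n\in\omega}$ together with Theorem \ref{WR}, there is a bijection $\pi\colon\omega\to\omega\times\omega$ such that $\pi^{-1}[M]\in\I\sqcup(A_n)_{n\in\omega}$ for every $M\in\mathcal{WR}$. The central technical step is to arrange $\pi$ so that $\pi[A_n]=\{n\}\times\omega$ for every $n$---equivalently, to pick enumerations $A_n=\{a^n_0,a^n_1,\ldots\}$ such that the induced bijection $\pi(a^n_j)=(n,j)$ remains a Kat\v{e}tov embedding of $\mathcal{WR}$ into $\I\sqcup(A_n)_{n\in\omega}$. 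This refinement is the hardest part of the argument; I would carry it out by a recursive construction, at each stage invoking Player~I's winning strategy in the game $G_1(\I\sqcup(A_n)_{n\in\omega})$ (available by Theorem \ref{kh777}) to diagonalize against the sparse paths that could obstruct the Kat\v{e}tov condition.

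With such $\pi$ in hand, set $B_k=A_k\in\J$ and $\phi(p)=i+j+1$ where $\pi(p)=(i,j)$. Condition (a) of Lemma \ref{lem3} is immediate, since for $p\in A_i$ one has $\phi(p)=i+j+1>i$. For condition (b), take any $(p_n)_{n\in\omega}$ with $p_{n+1}\in\bigcup_{k\geq\phi(p_n)}B_k$, and write $\pi(p_n)=(i_n,j_n)$; then $i_{n+1}\geq i_n+j_n+1>i_n+j_n$, so $\{(i_n,j_n):n\in\omega\}$ is a second-type generator of $\mathcal{WR}$, and hence $\{p_n:n\in\omega\}\in\I\sqcup(A_n)_{n\in\omega}$. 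Writing $\{p_n:n\in\omega\}=M\cup N$ with $M\in\I$ and $N\subset\bigcup_{i<m}A_i$, the distinctness of the first coordinates $i_n$ forces each $A_i$ to meet $\{p_n:n\in\omega\}$ in at most one point; therefore $|N|\leq m$, so $N\in\fin\subset\I$, whence $\{p_n:n\in\omega\}\in\I$. Lemma \ref{lem3} then delivers the desired inclusion.
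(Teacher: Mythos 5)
Your reduction to Lemma~\ref{lem3} and the verification of conditions (a) and (b) would indeed work \emph{if} one had a bijection $\pi$ with $\pi[A_n]=\{n\}\times\omega$ and $\pi^{-1}[M]\in\I\sqcup(A_n)_{n\in\omega}$ for all $M\in\mathcal{WR}$. But that central alignment claim is not merely left unproved -- under the hypotheses of the proposition it is false. Take the underlying set $\omega\times\omega$, let $\I=\mathcal{WR}$ (with the columns $\{n\}\times\omega$ as first-type generators), let $A_n=\omega\times\{n\}$ be the rows, and let $\J$ be any ideal containing all rows; then $\I\sqcup(A_n)_{n\in\omega}\supseteq\mathcal{WR}$ is not weakly Ramsey, so the hypotheses hold. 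Any bijection with $\pi[A_n]=\{n\}\times\omega$ has the form $\pi(x,n)=(n,\sigma_n(x))$ for bijections $\sigma_n$ of $\omega$. A second-type generator meets each column at most once, so its preimage meets each row at most once, and for such sets membership in $\I\sqcup(A_n)_{n\in\omega}$ reduces to membership in $\mathcal{WR}$. Since the generator condition puts no upper bound on how far the next column is chosen and no constraint at all on the heights $m=\sigma_n(x)$, one can recursively build a second-type generator $G$ such that $\pi^{-1}[G]$ contains, for every $s$, a block of $s$ points in distinct rows and distinct columns whose first coordinates all lie in an interval of length at most the least row index of the block; any two points of such a block violate the second-type condition, so a block of size $s$ cannot be covered by fewer than $s$ second-type generators, and (all first coordinates being distinct) finitely many columns and a finite set remove only finitely many points. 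Hence $\pi^{-1}[G]\notin\mathcal{WR}$, i.e., no admissible $\pi$ exists. Note also that in this example Player~I does have a winning strategy in $G_1(\I\sqcup(A_n)_{n\in\omega})$ (Theorem~\ref{kh777}), so the game-theoretic recursion you sketch cannot by itself produce the alignment.

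This is precisely the obstacle the paper's proof is built to avoid. There one first disposes of the case in which a single $A\in\J$ already makes $\I\sqcup A$ not weakly Ramsey (that case, which includes the counterexample above since there $\I$ itself is not weakly Ramsey, is handled by Proposition~\ref{q-type 3}), then assumes $(A_n)_{n\in\omega}\subseteq\I^+$, and instead of realigning $\pi$ decomposes each column preimage $\pi^{-1}[\{k\}\times\omega]$ as $B_k\cup C_k$ with $B_k$ a finite union of traces of the $A_n$'s and $C_k\in\I$. On $B=\bigcup_k B_k$ Lemma~\ref{lem3} is applied with the traces $A_n\cap B$ and a function $\phi_B$ read off from these covering bounds (so the columns of the $\mathcal{WR}$-copy correspond to finite unions of $A_n$-pieces rather than single $A_n$'s), while on $C=\bigcup_k C_k$ one shows $\I\upharpoonright C$ is itself not weakly Ramsey and again invokes Proposition~\ref{q-type 3}. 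Your argument would have to be rebuilt along some such decomposition; as written, the key step fails, and you also never make the initial case distinction on whether a single $A\in\J$ suffices, which is exactly where the counterexample lives.
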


\begin{proof}
Let $\left(A_{n}\right)_{n\in\omega}\subseteq \J$ be such that $\mathcal{WR}\sqsubseteq \mathcal{I}\sqcup (A_n)_{n\in\omega}$ (cf. Theorem \ref{WR}). There is a bijection $\pi\colon\omega\to\omega^2$ with $\pi^{-1} [M]\in\mathcal{I}\sqcup (A_n)_{n\in\omega}$ for any $M\in\mathcal{WR}$ (cf. Theorem \ref{WR}). Let $\pi_1,\pi_2\colon\omega\to\omega$ be given by $\pi(x)=(\pi_1(x),\pi_2(x))$ for all $x\in\omega$.

Without loss of generality we can assume that $(A_n)_{n\in\omega}$ is a partition of $\omega$. If there is $A\in \J$ such that $\mathcal{WR}\sqsubseteq \mathcal{I}\sqcup A$, then we are done by Theorem \ref{q-type 3}. Suppose that $\mathcal{I}\sqcup A$ does not contain an isomorphic copy of $\mathcal{WR}$ for any $A\in \J$. Then we can assume that $(A_n)_{n\in\omega}\subseteq \I^+$. 

For each $k\in\omega$ there exist $N_k\in\omega$ and disjoint sets $B_{k}$ and $C_{k}$ such that $\pi^{-1} [\{k\}\times\omega]=B_{k}\cup C_{k}$, $B_{k}=\bigcup_{n\leq N_{k}}A_{n}\cap \pi^{-1} [\{k\}\times\omega]$ and $C_{k}\in \I$. Assume additionally that $N_{0}< N_{1}<\ldots$ (in particular, $N_k\geq k$).

Denote $B=\bigcup_{k\in\omega}B_k$ and $C=\bigcup_{k\in\omega}C_k$. Then $B\cup C=\omega$ and, by Lemma \ref{4}, it suffices to prove that $PWD\left(X\right)\subset(\mathcal{I}\upharpoonright Z,\mathcal{J}\upharpoonright Z)\left(QC\left(X\right)\right)$ for $Z=B,C$. 

{\bf The set $B$. }Note that $(A_n\cap B)_{n\in\omega}$ is a partition of $B$ into sets belonging to $\J\upharpoonright B$. Consider $\phi_B\colon B\to\omega$ given by 
$$\phi_B(p)=\min\{i>m:\ \forall_{k\leq \pi_1(p)+\pi_2(p)}\forall_{j\geq i}\  A_j\cap B_k=\emptyset\},$$
where $m$ is such that $p\in A_m\cap B$. Observe that $\phi_B$ is well defined and
\begin{equation}
i\geq\phi_B(p)\Rightarrow A_i\cap B\subset\bigcup\{B_k:\  k>\pi_1(p)+\pi_2(p)\}. \label{eq:0}
\end{equation}

We will show that $(A_n\cap B)_{n\in\omega}$, $\phi_B$ and $\I\upharpoonright B$ satisfy conditions (a) and (b) of Lemma \ref{lem3}. It will follow that $PWD\left(X\right)\subset (\mathcal{I}\upharpoonright B,\mathcal{J}\upharpoonright B)\left(QC\left(X\right)\right)$ for any metric space $X$.

The condition (a) is obvious. To show the condition (b), take any $(p_n)_{n\in\omega}\subset B$ with $p_{n+1}\in\bigcup_{i\geq\phi_B(p_n)}A_i\cap B$ for all $n\in \omega$ and denote $P=\{p_n:\ n\in\omega\}$. 

Firstly, observe that $\pi[P]\in\mathcal{WR}$, since $\pi(p_{n+1})$ belongs to
$$\pi\left[\bigcup_{i\geq\phi_B(p_n)}A_i\cap B\right] \subset\pi\left[\bigcup\{B_k: k>\pi_1(p_{n})+\pi_2(p_{n})\}\right]\subset (\omega\setminus (\pi_1(p_{n})+\pi_2(p_{n})))\times\omega$$
by (\ref{eq:0}). Hence, $P\in\left(\mathcal{I}\sqcup (A_n)_{n\in\omega}\right)\upharpoonright B$. What is more, $|P\cap A_i|\leq 1$ for all $i\in\omega$, by the condition (a). Therefore, $P\in\I\upharpoonright B$.  

{\bf The set $C$. }Observe that $A_i\cap C\subset\bigcup_{k\leq i}C_k$ for all $i\in\omega$. Indeed, if $i<k$, then $i<N_k$ and $A_i\cap \pi^{-1} [\{k\}\times\omega]\subset B_k$, hence, $A_i\cap C_k=\emptyset$. Recall that each $C_k$ is in $\I$. Hence, $A_i\cap C\in\mathcal{I}\upharpoonright C$ for all $i\in\omega$. Therefore, $\mathcal{I}\sqcup (A_n)_{n\in\omega}\upharpoonright C=\I\upharpoonright C$.  By Fact \ref{monotonicznosc}, the ideal $\mathcal{I}\sqcup (A_n)_{n\in\omega}\upharpoonright C$ is not weakly Ramsey. It follows that $\I\upharpoonright C$ is not weakly Ramsey. By Fact \ref{zawieranie}, the pair $(\I\upharpoonright C,\fin\upharpoonright C)$ is of the third q-type. Then $PWD\left(X\right)\subset \textrm{Baire}\left(X\right)\subset (\mathcal{I}\upharpoonright C,\fin\upharpoonright C)\left(QC\left(X\right)\right)\subseteq  (\mathcal{I}\upharpoonright C,\mathcal{J}\upharpoonright C)\left(QC\left(X\right)\right)$ for any metric Baire space $X$ by Proposition \ref{q-type 3}.
\end{proof}

\subsection{Definable ideals}

We are ready to prove the main theorems of this section, summarizing all of our previous considerations.

\begin{theorem}
\label{Borel1}
Let $\I$ and $\J$ be non-orthogonal ideals on $\omega$. Suppose that $\I$ is coanalytic. 
\begin{enumerate}
	\item $(\mathcal{I},\mathcal{J})$ is of the first q-type if and only if $(\mathcal{I},\mathcal{J})\left(QC\left(X\right)\right)=PWD_0\left(X\right)$ for every metric Baire space $X$.
	\item $(\mathcal{I},\mathcal{J})$ is of the second q-type if and only if $(\mathcal{I},\mathcal{J})\left(QC\left(X\right)\right)=PWD\left(X\right)$ for every metric Baire space $X$.
	\item $(\mathcal{I},\mathcal{J})$ is of the third q-type if and only if $\textrm{Baire}\left(X\right)\subset(\mathcal{I},\mathcal{J})\left(QC\left(X\right)\right)$ for every metric Baire space $X$. Moreover, if $\I$ is Borel, then $\textrm{Baire}\left(X\right)=(\mathcal{I},\mathcal{J})\left(QC\left(X\right)\right)$ for every metric Baire space $X$.
\end{enumerate}
\end{theorem}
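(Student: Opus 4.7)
The plan is to upgrade the one-directional propositions of this section to biconditionals by means of Fact \ref{det}, which guarantees that the coanalyticity of $\I$ forces $(\I,\J)$ into exactly one of the three q-types, and then to handle the ``moreover'' clause of (3) by a short descriptive-set-theoretic reduction. The forward implications follow directly: (1) is Proposition \ref{q-type 1}; for (2), a pair of the second q-type satisfies the hypotheses of both Proposition \ref{q-type 2.1} (so $(\mathcal{I},\mathcal{J})(QC(X))\subset PWD(X)$) and Proposition \ref{q-type 2.2} (so $PWD(X)\subset(\mathcal{I},\mathcal{J})(QC(X))$), giving the equality; (3) is Proposition \ref{q-type 3}.

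For the reverse implications, I fix a part, assume its right-hand side, and rule out the remaining two q-types by applying the forward directions just established. For instance, to prove the reverse of (1): if $(\I,\J)$ were of the second q-type, we would obtain $PWD_0(X)=(\mathcal{I},\mathcal{J})(QC(X))=PWD(X)$ for every metric Baire space $X$, and if it were of the third q-type we would obtain $\textrm{Baire}(X)\subset PWD_0(X)$; both conclusions contradict the classical strict inclusions $PWD_0(\R)\subsetneq PWD(\R)\subsetneq\textrm{Baire}(\R)$ on the real line. The reverse implications of (2) and (3) are entirely analogous. The non-orthogonality assumption on $\I,\J$ is essential at this step: were $\I$ and $\J$ orthogonal, Lemma \ref{5} would force $(\mathcal{I},\mathcal{J})(QC(X))=\R^X$, rendering the right-hand sides of (1) and (2) unsatisfiable regardless of the q-type and invalidating the separation argument.

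For the ``moreover'' clause in (3), take any $f\in(\mathcal{I},\mathcal{J})(QC(X))$ witnessed by $(f_n)\subset QC(X)$ and positive reals $(\varepsilon_n)\xrightarrow{\J}0$. Lemma \ref{3} yields a sequence $(A_k)_{k\in\omega}\subset\J$ such that $(f_n)\xrightarrow{\mathcal{I}\sqcup(A_k)_{k\in\omega}} f$. Since $\I$ is Borel and therefore analytic, Lemma \ref{1} ensures that $\mathcal{I}\sqcup(A_k)_{k\in\omega}$ is analytic as well, so Lemma \ref{0} applied to $(f_n)\subset QC(X)\subset\textrm{Baire}(X)$ gives $f\in\textrm{Baire}(X)$. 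Combined with the inclusion $\textrm{Baire}(X)\subset(\mathcal{I},\mathcal{J})(QC(X))$ from the forward direction of (3), this yields the desired equality.

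The structural part of the whole argument is essentially bookkeeping once the propositions of the preceding subsections are in hand; the only genuinely external ingredient is the classical strict separation $PWD_0(\R)\subsetneq PWD(\R)\subsetneq\textrm{Baire}(\R)$ needed to make the reverse implications non-vacuous, which is a standard fact from the theory of quasi-continuous functions and whose proof I would relegate to a reference.
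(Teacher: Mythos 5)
Your proposal is correct and follows essentially the same route as the paper: the forward directions are Propositions \ref{q-type 1}, \ref{q-type 2.1}/\ref{q-type 2.2} and \ref{q-type 3}, the reverse directions come from Fact \ref{det} together with the strict inclusions $PWD_0\left(\R\right)\subsetneq PWD\left(\R\right)\subsetneq \textrm{Baire}\left(\R\right)$, and the ``moreover'' clause is handled exactly as in the paper via Lemmas \ref{3}, \ref{1} and \ref{0}. (Only your aside on non-orthogonality is slightly off: for orthogonal ideals the right-hand sides of (1)--(2) become unsatisfiable, so those reverse implications would be vacuously true rather than ``invalidated''; the hypothesis is really needed for the ``moreover'' equality in (3).)
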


\begin{proof}
Since $\I$ is coanalytic, by Fact \ref{det}, each pair $(\mathcal{I},\mathcal{J})$ is of some q-type. Therefore, in parts (1), (2) and (3) it suffices to prove only the implication from left to right, since the classes $PWD_0\left(\mathbb{R}\right)$, $PWD\left(\mathbb{R}\right)$ and $\textrm{Baire}\left(\mathbb{R}\right)$ do not coincide.

{\bf Part (1): }This is exactly Proposition \ref{q-type 1}.

{\bf Part (2): }The inclusion "$\subset$" follows from Proposition \ref{q-type 2.1} and the opposite one -- from Proposition \ref{q-type 2.2}.

{\bf Part (3): }The inclusion "$\supset$" is exactly Proposition \ref{q-type 3}. To prove the opposite one in the case of $\I$ being Borel, consider a sequence $(f_n)_{n\in\omega}\subset\R^{X}$ of quasi-continuous functions such that $(f_n)_{n\in\omega}\xrightarrow{(\mathcal{I},\mathcal{J})-e} f$ for some $f\in \R^{X}$. By Lemma \ref{3}, $(f_n)_{n\in\omega}\xrightarrow{\mathcal{I}\sqcup (A_n)_{n\in\omega}} f$ for some $(A_n)_{n\in\omega}\subset\J$. The ideal $\mathcal{I}\sqcup (A_n)_{n\in\omega}$ is Borel by Lemma \ref{1}. Now it follows from Lemma \ref{0} that $f\in \textrm{Baire}(X)$.
\end{proof}

\begin{remark}
The implications from left to right in parts (1), (2) and (3) of Theorem \ref{Borel1} remain true even if we drop the assumption that $\I$ is coanalytic.
\end{remark}

The next result characterizes higher Baire classes (generated by quasi-continuous functions) with respect to $(\I,\J)$-equal convergence.

\begin{proposition}
Suppose that $\I$ and $\J$ are non-orthogonal ideals on $\omega$. Then the classes $(\mathcal{I},\mathcal{J})\left(PWD_0\left(X\right)\right)$, $(\mathcal{I},\mathcal{J})\left(PWD\left(X\right)\right)$ and $(\mathcal{I},\mathcal{J})\left(\textrm{Baire}\left(X\right)\right)$ all contain the class $\textrm{Baire}(X)$ for every metric Baire space $X$. Moreover, if $\I$ is analytic, then all those classes are equal to $\textrm{Baire}(X)$ for every metric Baire space $X$. 
\end{proposition}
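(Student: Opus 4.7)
The plan is to treat the two assertions separately. From the inclusions $PWD_0(X) \subset PWD(X) \subset \textrm{Baire}(X)$ we obtain
\[
(\I,\J)(PWD_0(X)) \subset (\I,\J)(PWD(X)) \subset (\I,\J)(\textrm{Baire}(X)),
\]
so for the containment statement it suffices to establish $\textrm{Baire}(X) \subset (\I,\J)(PWD_0(X))$, and for the equality we additionally need the reverse inclusion $(\I,\J)(\textrm{Baire}(X)) \subset \textrm{Baire}(X)$.

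For the containment I would invoke Grande's classical theorem recalled in the Preliminaries: on a metric Baire space, the second Baire class generated by $QC(X)$ under classical discrete convergence coincides with $\textrm{Baire}(X)$. Hence every $f \in \textrm{Baire}(X)$ is a classical (that is, $\fin$-)discrete limit of some sequence $(g_n)_{n\in\omega} \subset PWD_0(X)$. Since $\fin \subset \I$, the convergence is already $\I$-discrete, and Lemma~\ref{2} promotes it to $(\I,\J)$-equal convergence. Thus $f \in (\I,\J)(PWD_0(X))$.

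For the reverse inclusion required by the equality statement, fix $f \in (\I,\J)(\textrm{Baire}(X))$ witnessed by $(f_n)_{n\in\omega} \subset \textrm{Baire}(X)$ and positive reals $(\varepsilon_n)_{n\in\omega}$ with $\varepsilon_n \xrightarrow{\J} 0$. Lemma~\ref{3} produces sets $A_0, A_1, \ldots \in \J$, with each finite union $\bigcup_{i<n} A_i$ still belonging to $\J$, such that $(f_n)_{n\in\omega} \xrightarrow{\I \sqcup (A_k)_{k\in\omega}} f$. By Lemma~\ref{1} analyticity of $\I$ passes to $\I \sqcup (A_k)_{k\in\omega}$, and Lemma~\ref{0} then yields $f \in \textrm{Baire}(X)$.

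The point where care is needed is ensuring that Lemma~\ref{0} is not applied vacuously, i.e.\ that $\I \sqcup (A_k)_{k\in\omega}$ is a proper ideal on $\omega$. This is precisely where the non-orthogonality hypothesis enters: if $\omega$ belonged to $\I \sqcup (A_k)_{k\in\omega}$, one could write $\omega = M \cup N$ with $M \in \I$ and $N \subset \bigcup_{i<n} A_i \in \J$, contradicting non-orthogonality of $\I$ and $\J$. Beyond this point the argument is routine bookkeeping over the lemmas of Section~\ref{Basic properties of ideal convergence}, so I do not anticipate any further obstacle.
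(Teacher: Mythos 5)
Your proposal is correct and follows essentially the same route as the paper: the chain $PWD_0(X)\subset PWD(X)\subset \textrm{Baire}(X)$, the representation of every Baire function as a discrete limit of a sequence from $PWD_0(X)$ (the paper cites Natkaniec--Szuca for this, you cite Grande's theorem as recalled in the Preliminaries -- same fact) upgraded via Lemma~\ref{2}, and for the analytic case Lemma~\ref{3} plus Lemma~\ref{1} plus Lemma~\ref{0} applied to $\I\sqcup(A_k)_{k\in\omega}$. Your explicit check that non-orthogonality keeps $\I\sqcup(A_k)_{k\in\omega}$ proper is a legitimate (if minor) point that the paper leaves implicit.
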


\begin{proof}
Since $PWD_0\left(X\right)\subset PWD\left(X\right)\subset \textrm{Baire}\left(X\right)$, we have: $$(\mathcal{I},\mathcal{J})\left(PWD_0\left(X\right)\right)\subset(\mathcal{I},\mathcal{J})\left(PWD\left(X\right)\right)\subset(\mathcal{I},\mathcal{J})\left(\textrm{Baire}\left(X\right)\right).$$ 
By \cite[Theorem 9 and Proposition 16]{nat-szuca2}, for every Baire function $f\in\R^{X}$ there is a sequence of functions in $PWD_0(X)$ which discretely converges to $f$. Now the inclusion $\textrm{Baire} (X)\subset (\mathcal{I},\mathcal{J})\left(PWD_0\left(X\right)\right)$ follows from Lemma \ref{2}. Finally, if $\I$ is analytic, then the inclusion $(\mathcal{I},\mathcal{J})\left(\textrm{Baire}\left(X\right)\right)\subset \textrm{Baire}(X)$ follows from Lemma \ref{0} similarly as in part (3) of the previous Theorem. 
\end{proof}

\section{Ideal equal convergence of sequences of continuous functions}
\label{Ideal equal convergence of sequences of continuous functions}

In this section we want to characterize ideal equal Baire classes generated by the family of continuous functions. These studies extend the results from \cite{filipow-szuca-I-Baire}. In the first subsection we introduce some useful notions. Next, we prove the mentioned characterization.

\subsection{An infinite game and the c-types}

Let $\I$ be an ideal on $\omega$. Consider another game, $G_{2}\left(\I\right)$, defined by Laflamme (see \cite{laf}) as follows: Player I in his $n$'th move plays an element $C_{n}\in\I$, and then Player II responses with any $F_{n}\in \left[\omega\right]^{<\omega}$ such that $F_{n}\cap C_{n}=\emptyset$. Player I wins if $\bigcup_{n\in \omega} F_{n}\in \I$. Otherwise, Player II wins.

\begin{theorem}[{\cite[Fact 3.10]{KZ}}]
\label{determinacja2}
If $\mathcal{I}$ is coanalytic, then the game $G_{2}(\mathcal{I})$ is determined.
\end{theorem}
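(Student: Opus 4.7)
The plan is to mimic the proof of Theorem \ref{determinacja} for $G_{1}(\I)$, attributed here to \cite[Fact 3.10]{KZ}. The only substantive difference between $G_{1}(\I)$ and $G_{2}(\I)$ is that in $G_{2}(\I)$ Player II plays finite subsets $F_n\in[\omega]^{<\omega}$ rather than single elements $a_n\in\omega$. Since $[\omega]^{<\omega}$ is countable, this is cosmetic: after fixing a bijection $[\omega]^{<\omega}\to\omega$, Player II still plays from a countable alphabet, and the structural features that yield determinacy in the $G_1$ case survive verbatim.

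The approach is to recode $G_{2}(\I)$ as an equivalent game of length $\omega$ on $\omega$. Player I reveals the bits of $C_n\in 2^\omega$ interleaved with Player II's single-natural-number responses, and illegal moves (in particular, a Player I move producing $C_n\notin\I$) are declared an immediate loss. On the coded play, both the legality constraint $C_n\in\I$ and the payoff condition $\bigcup_n F_n\in\I$ become coanalytic subsets of $\omega^\omega$. To obtain determinacy in ZFC without invoking large-cardinal hypotheses, I would pass to an auxiliary game in which Player II also plays natural numbers $b_n$ aimed at building an infinite branch through a tree $T\subseteq(2\times\omega)^{<\omega}$ that represents the analytic set $\mathcal{P}(\omega)\setminus\I$ (i.e., $A\notin\I$ iff some $(b_n)_n\in\omega^\omega$ satisfies $(A\restriction n,(b_0,\ldots,b_{n-1}))\in T$ for all $n$). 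Declaring Player II the winner of the auxiliary game precisely when $(\bigcup_n F_n,(b_n)_n)\in[T]$ gives her a closed winning condition, so the auxiliary game is determined by the Gale--Stewart theorem.

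The main obstacle, routine but fussy, is the transfer of strategies between the auxiliary game and $G_{2}(\I)$. A Player II winning strategy in the auxiliary game projects to a winning strategy in $G_{2}(\I)$, because its winning condition forces $\bigcup_n F_n\notin\I$. Conversely, if Player I has a winning strategy in the auxiliary game, I claim the same strategy (projected to $G_{2}(\I)$-moves) wins $G_{2}(\I)$: otherwise some Player II counter-strategy $\tau$ in $G_{2}(\I)$ produces $\bigcup_n F_n\notin\I$, yielding a branch of $T_{\bigcup_n F_n}$, whose entries Player II could feed adaptively as $b_n$'s in the auxiliary game, contradicting Player I's auxiliary win. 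The coanalytic legality of $C_n\in\I$ is absorbed symmetrically into the auxiliary setup (for instance, by equipping Player I with witness plays for $C_n\in\I$ that are required for her own legality). Once these transfers are verified, the determinacy of $G_{2}(\I)$ follows, completing the proof.
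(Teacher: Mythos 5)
The paper does not actually prove this statement: it is quoted verbatim from \cite[Fact 3.10]{KZ}, so the only thing to assess is whether your argument would stand on its own. The framework you choose (unfold Player II's analytic payoff ``$\bigcup_n F_n\notin\I$'' by letting her play a branch through a tree $T$ projecting to $\mathcal{P}(\omega)\setminus\I$, apply determinacy to the auxiliary game, then transfer strategies) is the right one, but the crucial step --- transferring a winning strategy of Player I from the auxiliary game back to $G_2(\I)$ --- is exactly where your argument breaks. ``The same strategy projected to $G_2(\I)$-moves'' is not well defined: the auxiliary strategy $\sigma$ reacts to the witness bits $b_n$, which do not exist in $G_2(\I)$. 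And the repair you sketch --- Player II ``feeds the branch adaptively'' --- cannot work: the witness branch for $\bigcup_n F_n\notin\I$ is only available after the play is complete, and if Player II had actually played those $b_n$'s during the game, $\sigma$ would in general have answered with different sets $C_n$, forcing different legal $F_n$'s and a different union, for which the chosen branch need not be a witness. If this naive back-and-forth were valid it would prove full analytic determinacy in ZFC, which is not a theorem of ZFC; so the gap is genuine and not just a matter of fussiness.

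What saves the day for these particular games is their monotone structure, which your proposal never uses: $\I$ is closed under finite unions, and enlarging $C_n$ only restricts Player II. This allows Player I, in the real game, to run a growing tree of simulated auxiliary positions indexed by finite witness sequences, activating every node eventually and playing at each round the finite union of $\sigma$'s responses at all currently active nodes (legal since it is a finite union of members of $\I$). If Player II nevertheless achieved $A=\bigcup_n F_n\notin\I$, any witness branch $b$ for $A$ would single out a $\sigma$-consistent auxiliary play (whose Player II moves are blocks of the real $F_n$'s, legal because each real $F_n$ avoids the real $C_n$, which contains the relevant simulated responses) that Player II wins --- contradicting that $\sigma$ is winning. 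Alternatively one can argue as Laflamme's characterization suggests: from a Player I win in the unfolded game extract the combinatorial property (failure of weak $P$-ideal) and invoke Theorem \ref{234444}. Two smaller points: your auxiliary payoff is not literally closed, since membership of a given $k$ in $\bigcup_n F_n$ can be confirmed but never refuted at a finite stage, so either modify the game (Player II also commits to initial segments of the characteristic function) or simply invoke Borel determinacy; and the ``witnesses for $C_n\in\I$'' device is unnecessary --- one plays on the tree of legal positions, and no definability of the rules is needed, only of the payoff.
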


A set $\mathcal{Z}=\left\{A_{m}:\ m\in \omega\right\}\subseteq \left[\omega\right]^{<\omega}\setminus \left\{\emptyset\right\}$ is \emph{$\I^*$-universal} if for each $F\in \I^*$ there is $m\in \omega$ such that $A_{m}\subseteq F$. We say that $\I$ is \emph{$\omega$-diagonalizable by $\mathcal{I}^*$-universal sets} if there exists a sequence $(\mathcal{Z}_{N})_{N\in \omega}$ of $\I^*$-universal sets such that for each $F\in \I^*$ there is $\mathcal{Z}_{N}=\left\{A_{N,m}:\ m\in \omega\right\}$ with $A_{N,m}\cap F\neq \emptyset$ for every $m\in \omega$. An ideal $\I$ is a \emph{weak $P$-ideal} if for every sequence $(X_{n})_{n\in \omega}\subseteq \I$ there exists $X\in \I^+$ such that $X_{n}\cap X\in \fin$ for every $n\in \omega$. The above notions were introduced by Laflamme in order to give the following characterization.

\begin{theorem}[Laflamme, {\cite[Theorem 2.16]{laf}}]\label{234444}
Let $\mathcal{I}$ be an ideal.
\begin{enumerate}
	\item Player I has a winning strategy in $G_{2}(\mathcal{I})$ if and only if $\I$ is not a weak $P$-ideal.	
	\item Player II has a winning strategy in $G_{2}(\mathcal{I})$ if and only if $\I$ is $\omega$-diagonalizable by $\mathcal{I}^*$-universal sets.
\end{enumerate}
\end{theorem}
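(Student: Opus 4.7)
The plan is to prove each biconditional by exhibiting an explicit winning strategy in the easier direction and by recovering the combinatorial property from a given winning strategy in the converse.

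For part (1), direction $(\Leftarrow)$: if $(X_n)_{n\in\omega}\subseteq\mathcal{I}$ witnesses that $\mathcal{I}$ is not a weak $P$-ideal, Player I plays $C_n=\bigcup_{i\leq n}X_i\in\mathcal{I}$. Any legal Player II response satisfies $F_n\cap X_i=\emptyset$ for $i\leq n$, so $(\bigcup_n F_n)\cap X_i\subseteq F_0\cup\cdots\cup F_{i-1}$ is finite for every $i$; by the witnessing property, $\bigcup_n F_n\in\mathcal{I}$ and Player I wins. For the converse, given a winning strategy $\sigma$ for Player I, observe that the tree of Player II histories is countable (since $[\omega]^{<\omega}$ is countable), hence the collection of $\sigma$-responses is countable; enumerate it as $(X_n)_{n\in\omega}\subseteq\mathcal{I}$. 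To verify that $(X_n)$ witnesses failure of weak $P$-ideal, suppose $X\in\mathcal{I}^+$ meets every $X_n$ finitely. I would diagonally build a Player II play drawing its $F_n$'s from the (cofinite-in-$X$) sets $X\setminus C_n$, arranging via a simultaneous enumeration of $X$ that every element of $X$ eventually appears in some $F_n$, modulo an $\mathcal{I}$-small residue. The resulting $\bigcup_n F_n$ lies in $\mathcal{I}^+$, contradicting that $\sigma$ was winning.

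For part (2), direction $(\Leftarrow)$: let $(\mathcal{Z}_N)_{N\in\omega}$ with $\mathcal{Z}_N=\{A_{N,m}:m\in\omega\}$ witness $\omega$-diagonalizability, and fix a bijection $n\mapsto(N_n,k_n)$ of $\omega$ onto $\omega\times\omega$. Against Player I's move $C_n\in\mathcal{I}$, Player II applies $\mathcal{I}^*$-universality of $\mathcal{Z}_{N_n}$ to the set $\omega\setminus C_n\in\mathcal{I}^*$ to choose $m_n$ with $A_{N_n,m_n}\subseteq\omega\setminus C_n$, and plays $F_n=A_{N_n,m_n}$. If $\bigcup_n F_n\in\mathcal{I}$, then $F=\omega\setminus\bigcup_n F_n\in\mathcal{I}^*$, and $\omega$-diagonalizability supplies $N^*$ with $A_{N^*,m}\cap F\neq\emptyset$ for all $m$; but any $n$ with $N_n=N^*$ gives $F_n=A_{N^*,m_n}\subseteq\bigcup_n F_n$, so $A_{N^*,m_n}\cap F=\emptyset$, a contradiction. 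Conversely, from a winning $\tau$ one extracts countable families $\mathcal{Z}_N$ by recording $\tau$'s responses at a suitably pruned countable subtree of Player I's histories of length $N$; disjointness of $\tau$'s output from the preceding $C\in\mathcal{I}$ gives $\mathcal{I}^*$-universality, and the winning condition $\bigcup_n F_n\in\mathcal{I}^+$ along plays designed around an arbitrary $F\in\mathcal{I}^*$ delivers $\omega$-diagonalizability.

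The two $(\Rightarrow)$ directions are the hard points. In part (1) the diagonal construction must guarantee that $\bigcup_n F_n$ is $\mathcal{I}$-positive, not merely infinite, which demands sweeping $X$ up to an $\mathcal{I}$-null remainder even though successive $C_n$'s may contain awkward finite portions of $X$. In part (2) the reduction of Player I's uncountable move-space to countable $\mathcal{Z}_N$'s while preserving both universality and diagonalization is the principal technical burden.
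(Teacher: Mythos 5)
The paper does not prove this statement at all -- it is quoted verbatim from Laflamme \cite[Theorem 2.16]{laf} -- so your attempt can only be judged on its own merits. Your two easy directions are fine: in (1)$(\Leftarrow)$ the play $C_n=\bigcup_{i\leq n}X_i$ works exactly as you say, and your (2)$(\Leftarrow)$ argument is complete and correct. In (2)$(\Rightarrow)$ you are closer than you think: the ``reduction of Player I's uncountable move-space'' is not the real burden, because Player II's responses are finite subsets of $\omega$, so for each position $p$ the set $\mathcal{Z}_p=\{\tau(p^\frown C):C\in\mathcal{I}\}$ is automatically countable and is $\mathcal{I}^*$-universal (given $F\in\mathcal{I}^*$, feed $\tau$ the move $\omega\setminus F$); one then builds a countable subtree by choosing, for each position and each possible response value, one move of Player I realizing it, and a failure of $\omega$-diagonalization for the resulting families yields a play consistent with $\tau$ whose union is contained in a set of $\mathcal{I}$, a contradiction. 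That part of your sketch is the right shape, just under-detailed.

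The genuine gap is in (1)$(\Rightarrow)$. You enumerate the $\sigma$-responses as $(X_n)$, take $X\in\mathcal{I}^+$ meeting each $X_n$ finitely, and claim you can build a play whose $F_n$'s exhaust $X$ ``modulo an $\mathcal{I}$-small residue.'' The obstruction is not the ``awkward finite portions'' of $X$ inside each single $C_n$: it is that a fixed $x\in X$ may lie in $C_n$ for \emph{every} $n$ from the stage at which you first try to place it onward, and retrying does not help because Player I's later moves react to your moves. If $M$ denotes the set of such permanently blocked points, all your hypothesis gives is $M\cap[0,k_m]\subseteq\bigcap_{n\geq m}C_n\subseteq C_m$, i.e.\ $M$ is an increasing union of finite sets, each finite because $X\cap C_m$ is finite; nothing forces $M\in\mathcal{I}$. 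So the play you construct has $\bigcup_n F_n=X\setminus M$, which may perfectly well belong to $\mathcal{I}$, and no contradiction with $\sigma$ being winning is reached. Indeed it is not even clear that the raw family of $\sigma$-responses witnesses the failure of the weak $P$-property; the known proofs have to work harder, e.g.\ by analyzing, for positions $s$, the sets $\bigcap\{\sigma(t):t\supseteq s\}$ of points blocked along all continuations, or by first reducing to a canonical (monotone/predetermined) form of the strategy where Player II knows the future moves and can schedule each element of $X$ at its last available stage. This scheduling/fusion step is the entire content of part (1), and your proposal asserts rather than supplies it.
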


\begin{theorem}[\cite{LR} and \cite{barb}]
\label{finfin}
The following are equivalent for any ideal $\mathcal{I}$:
\begin{enumerate}
\item $\I$ is not a weak $P$-ideal;
\item $\fin\otimes\fin\sqsubseteq\mathcal{I}$;
\item $\fin\otimes\fin\leq_{K}\mathcal{I}$.
\end{enumerate}
\end{theorem}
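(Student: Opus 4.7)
The plan is to show $(2)\Rightarrow(3)\Rightarrow(1)\Rightarrow(2)$, the last of which carries the main content. The implication $(2)\Rightarrow(3)$ is immediate, since a bijection realizing $\fin\otimes\fin\sqsubseteq\I$ is by definition a Kat\v{e}tov witness.

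For $(3)\Rightarrow(1)$, the strategy rests on the observation that $\fin\otimes\fin$ is itself not a weak $P$-ideal: the vertical lines $X_n=\{n\}\times\omega$ all lie in $\fin\otimes\fin$, yet every $Y\in(\fin\otimes\fin)^+$ has infinitely many columns with infinite trace, so $X_n\cap Y$ is infinite for some $n$. Given a Kat\v{e}tov witness $g\colon\omega\to\omega^2$ for (3), I would then show that $(g^{-1}[X_n])_{n\in\omega}\subset\I$ witnesses that $\I$ is not weak $P$: for any $Z\in\I^+$, first note $g[Z]\in(\fin\otimes\fin)^+$ (otherwise $Z\subset g^{-1}[g[Z]]\in\I$, a contradiction), hence $X_n\cap g[Z]$ is infinite for some $n$, and since $g$ maps $g^{-1}[X_n]\cap Z$ onto $X_n\cap g[Z]$, the set $g^{-1}[X_n]\cap Z$ is also infinite.

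The main work is $(1)\Rightarrow(2)$. Given a witness $(X_n)_{n\in\omega}\subset\I$ that $\I$ is not weak $P$, the plan is to produce a partition of $\omega$ into infinite sets $V_n\in\I$ and then define the required bijection $f\colon\omega\to\omega^2$ by choosing any bijection $V_n\to\{n\}\times\omega$; one then verifies that $A\in\fin\otimes\fin$ (meaning all but finitely many columns of $A$ are finite) forces $f^{-1}[A]$ to be a finite union of sets in $\I$, hence in $\I$. To produce the partition I would first replace $X_n$ by $X_0\cup\ldots\cup X_n$ so the sequence is increasing, then note that $\omega\setminus\bigcup X_n\in\I$ (else it would be an $\I$-positive set disjoint from every $X_n$), and absorb it into $X_0$ so that $\bigcup X_n=\omega$. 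The crux is the subclaim that infinitely many differences $X_{n+1}\setminus X_n$ are infinite: otherwise pick $N$ with $X_{n+1}\setminus X_n$ finite for all $n\geq N$ and consider $Y=\omega\setminus X_N$, which is $\I$-positive (else $\omega=X_N\cup Y\in\I$, contradicting properness of $\I$), yet satisfies $X_n\cap Y\subset X_n\setminus X_N$, a finite set, for every $n$, contradicting the witness property.

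Passing to a subsequence where every $X_{n+1}\setminus X_n$ is infinite, the partition $V_0=X_1$, $V_n=X_{n+1}\setminus X_n$ for $n\geq 1$ consists of infinite sets, each contained in some $X_{n+1}\in\I$, with union $\omega$; the bijection built from this partition then completes the proof. The main obstacle is exactly the subclaim about infinite gaps: without it the natural partition produced from an increasing cover $(X_n)$ would contain finite pieces, so the domain could not be split bijectively onto the columns of $\omega^2$. Everything else in the argument is essentially bookkeeping.
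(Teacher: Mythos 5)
Your overall architecture $(2)\Rightarrow(3)\Rightarrow(1)\Rightarrow(2)$ is the standard one (the paper only cites this theorem from \cite{LR} and \cite{barb}, so there is no internal proof to compare with). The implications $(2)\Rightarrow(3)$ and $(3)\Rightarrow(1)$ are correct as written, and so is your construction in $(1)\Rightarrow(2)$ of an increasing witness sequence covering $\omega$ with infinitely many infinite gaps, yielding a partition $(V_n)_{n\in\omega}$ of $\omega$ into infinite members of $\I$.

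The gap is in the final verification, which you describe as "essentially bookkeeping". For $A\in\fin\otimes\fin$ the set $f^{-1}[A]$ is \emph{not} in general a finite union of sets that are visibly in $\I$: take $A$ to be the diagonal $\{(n,n):\ n\in\omega\}$; then $f^{-1}[A]$ is a selector meeting every $V_n$, so it is not covered by finitely many $V_n$'s, and its membership in $\I$ is precisely what has to be proved. Indeed, a partition of the underlying set into infinite members of the ideal exists for many weak $P$-ideals (e.g.\ the columns for $\fin\otimes\emptyset$, which is a weak $P$-ideal), and there the bijection built from such a partition does \emph{not} send $\fin\otimes\fin$ into the ideal; so the witness property must reappear in the verification, not only in the construction of the $V_n$'s. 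The missing step is short but it is where hypothesis (1) actually does its work: write $A=E\cup F$, where $E$ is the part of $A$ lying in the finitely many infinite columns and $F$ has all columns finite. Then $f^{-1}[E]$ is contained in finitely many $V_n$'s, hence in $\I$; and $f^{-1}[F]$ meets each $V_n$ in a finite set (as $f\upharpoonright V_n$ is a bijection onto the $n$-th column), hence meets each $X_n=V_0\cup\ldots\cup V_{n-1}$ in a finite set, so by the contrapositive of the witness property (every $\I$-positive set has infinite intersection with some $X_n$) it belongs to $\I$. With this inserted, your proof is complete. Two minor side remarks: after absorbing $\omega\setminus\bigcup_n X_n$ you should add it to every $X_n$ (not only $X_0$) if you want to keep the sequence increasing; and your appeal to properness of $\I$ in the subclaim is harmless, since for $\I=\mathcal{P}(\omega)$ statement (2) holds trivially.
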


It follows from the above theorems that if $\I$ is a coanalytic ideal, then either $\fin\otimes\fin\sqsubseteq\mathcal{I}$ or $\I$ is $\omega$-diagonalizable by $\mathcal{I}^*$-universal sets.

Analogously to the q-types, we define the c-types of pairs of ideals.

\begin{definition}
Let $\I$ and $\J$ be ideals on $\omega$.
\begin{enumerate}
	\item $(\I,\J)$ is of the \emph{first c-type} if for any sequence $(A_n)_{n\in\omega}$ of elements of $\J$ the ideal $\mathcal{I}\sqcup (A_n)_{n\in\omega}$ is $\omega$-diagonalizable by $(\mathcal{I}\sqcup (A_n)_{n\in\omega})^*$-universal sets.
	\item $(\I,\J)$ is of the \emph{second c-type} if there is a sequence $(A_n)_{n\in\omega}$ of elements of $\J$ such that the ideal $\mathcal{I}\sqcup (A_n)_{n\in\omega}$ contains an isomorphic copy of $\fin\otimes\fin$, but for any $A\in\J$ the ideal $\mathcal{I}\sqcup A$ is $\omega$-diagonalizable by $(\mathcal{I}\sqcup A)^*$-universal sets.
	\item $(\I,\J)$ is of the \emph{third c-type} if there is $A\in\J$ such that the ideal $\mathcal{I}\sqcup A$ contains an isomorphic copy of $\fin\otimes\fin$.
\end{enumerate}
\end{definition}

\begin{fact}
\label{det2}
If $\I$ is coanalytic, then each pair $(\I,\J)$ is of some c-type.
\end{fact}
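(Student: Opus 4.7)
The plan is to mirror closely the argument used for Fact \ref{det}, replacing the game $G_1$ and its associated combinatorial characterizations with $G_2$ and Theorems \ref{determinacja2}, \ref{234444} and \ref{finfin}. First I would observe that by Lemma \ref{1}, coanalyticity of $\I$ is inherited by the ideals $\I\sqcup A$ and $\I\sqcup (A_n)_{n\in\omega}$ for arbitrary $A\subset\omega$ and $(A_n)_{n\in\omega}\subset\mathcal{P}(\omega)$. Hence the game $G_2$ played on any of these ideals is determined by Theorem \ref{determinacja2}, and Theorem \ref{234444} then gives the dichotomy: each such ideal is either not a weak $P$-ideal or is $\omega$-diagonalizable by its universal sets. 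By Theorem \ref{finfin}, the first alternative is equivalent to containing an isomorphic copy of $\fin\otimes\fin$.

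Next I would carry out the case analysis used to locate the pair $(\I,\J)$ inside one of the three c-types. Consider first the case where for every sequence $(A_n)_{n\in\omega}\subset\J$ the ideal $\I\sqcup (A_n)_{n\in\omega}$ is $\omega$-diagonalizable by $(\I\sqcup (A_n)_{n\in\omega})^*$-universal sets; then $(\I,\J)$ is of the first c-type by definition. Otherwise there exists some sequence $(A_n)_{n\in\omega}\subset\J$ for which $\I\sqcup(A_n)_{n\in\omega}$ is not $\omega$-diagonalizable by universal sets. By determinacy and Theorems \ref{234444} and \ref{finfin}, this ideal then contains an isomorphic copy of $\fin\otimes\fin$. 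Now I split further: if some single $A\in\J$ already satisfies $\fin\otimes\fin\sqsubseteq \I\sqcup A$, we are in the third c-type; if not, then no single $A\in\J$ has $\I\sqcup A$ containing a copy of $\fin\otimes\fin$, and again by determinacy (applied to $\I\sqcup A$ instead) this means $\I\sqcup A$ is $\omega$-diagonalizable by its universal sets for every $A\in\J$, so $(\I,\J)$ is of the second c-type.

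There is no real obstacle here; the entire statement is a direct translation of Fact \ref{det}, and the only technical inputs — coanalyticity of the relevant sum ideals, determinacy of $G_2$, and the combinatorial characterizations of the winning conditions — are already available in the text. The only point worth flagging is that one must remember to invoke Theorem \ref{finfin} to pass from the failure of weak $P$-ideal property to the presence of an isomorphic copy of $\fin\otimes\fin$, since the c-type definitions are stated in terms of $\sqsubseteq$ rather than in terms of Player I's winning strategies in $G_2$.
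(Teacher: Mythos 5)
Your proposal is correct and follows essentially the same route as the paper: Lemma \ref{1} gives coanalyticity of $\I\sqcup A$ and $\I\sqcup (A_n)_{n\in\omega}$, and then Theorems \ref{determinacja2}, \ref{234444} and \ref{finfin} yield the dichotomy from which the case analysis over the three c-types follows. The paper states this more tersely, but your explicit case split (including the use of Theorem \ref{finfin} to translate the failure of the weak $P$-property into $\fin\otimes\fin\sqsubseteq\cdot$) is exactly what is implicit there.
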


\begin{proof}
It follows from Lemma \ref{1} that $\mathcal{I}\sqcup (A_n)_{n\in\omega}$ and $\mathcal{I}\sqcup A$ are coanalytic for any $(A_n)_{n\in\omega}$ and $A\subset\omega$. Then we are done by Theorems \ref{determinacja2}, \ref{234444} and \ref{finfin}.
\end{proof}

Examples of pairs of ideals for every c-type are similar to the examples of pairs of ideals for every q-type from the previous section.

\subsection{The first and third c-type}

In this subsection we characterize $(\mathcal{I},\mathcal{J})\left(C\left(X\right)\right)$ for all pairs of ideals $(\mathcal{I},\mathcal{J})$ of the first or third c-type.

\begin{proposition}\label{a}
Let $X$ be a perfectly normal topological space and $1\leq n<\omega$. Suppose that $\I$ and $\J$ are ideals on $\omega$ such that for any sequence $(A_n)_{n\in\omega}$ of elements of $\J$ the ideal $\mathcal{I}\sqcup (A_n)_{n\in\omega}$ is $\omega$-diagonalizable by $(\mathcal{I}\sqcup (A_n)_{n\in\omega})^*$-universal sets. Then $(\mathcal{\fin},\mathcal{\fin})_{n}\left(C\left(X\right)\right)= (\mathcal{I},\mathcal{J})_{n}\left(C\left(X\right)\right)$.
\end{proposition}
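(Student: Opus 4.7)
The plan is to prove both inclusions, with the nontrivial direction handled by induction on $n$.

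For the inclusion $(\fin,\fin)_n(C(X)) \subseteq (\I,\J)_n(C(X))$, a routine induction on $n$ using the unlabeled monotonicity lemma situated between Lemma \ref{3} and Lemma \ref{4} suffices: since $\fin \subseteq \I$ and $\fin \subseteq \J$, every $(\fin,\fin)$-equal convergent sequence is automatically $(\I,\J)$-equal convergent, and this inclusion then propagates up through each Baire level by the usual formal induction on the definition of $(\cdot,\cdot)_n$.

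For the reverse inclusion, I induct on $n$. In the base case $n = 1$, take $f \in (\I,\J)(C(X))$ witnessed by continuous $(f_k)_{k\in\omega}$ and positive reals $(\varepsilon_k)$ with $(\varepsilon_k) \xrightarrow{\J} 0$. Lemma \ref{3} then yields $(f_k) \xrightarrow{\K} f$ for $\K = \I \sqcup (A_k)_{k\in\omega}$, where the associated partition $(A_k)_{k\in\omega}$ lies in $\J$. By hypothesis $\K$ is $\omega$-diagonalizable by $\K^*$-universal sets, so the Laczkovich-Rec{\l}aw theorem from \cite{LR} forces $f$ into the classical first Baire class $B_1(X)$, which on a perfectly normal space coincides with $(\fin,\fin)_1(C(X))$.

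For the inductive step, assume the equality for all $m$ with $1 \leq m < n$. Take $f \in (\I,\J)_n(C(X))$, expressed as an $(\I,\J)$-equal limit of functions $f_k \in \bigcup_{\beta<n}(\I,\J)_\beta(C(X))$, which by the inductive hypothesis equals $\bigcup_{\beta<n}(\fin,\fin)_\beta(C(X)) = B_{n-1}(X)$. Lemma \ref{3} once more converts this into $(f_k) \xrightarrow{\K} f$ for some $\K = \I \sqcup (A_k)_{k\in\omega}$ with $(A_k) \subseteq \J$, and $\K$ is $\omega$-diagonalizable by $\K^*$-universal sets. The higher-Baire-class extension due to Filip{\'o}w and Szuca in \cite{filipow-szuca-I-Baire} then gives that a $\K$-pointwise limit of functions from $B_{n-1}(X)$ stays in $B_n(X)$, and $B_n(X) = (\fin,\fin)_n(C(X))$ on perfectly normal $X$.

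The main obstacle is calibrating the external Baire-class characterizations to the modified ideals $\K = \I \sqcup (A_k)_{k\in\omega}$ rather than the bare $\I$, together with keeping the identification $(\fin,\fin)_n(C(X)) = B_n(X)$ available on perfectly normal $X$. Lemma \ref{3} is the workhorse that makes this clean: it absorbs the \emph{equal} structure of the convergence into the ideal itself, letting us invoke the pointwise ideal-convergence theory of \cite{LR} and \cite{filipow-szuca-I-Baire} directly.
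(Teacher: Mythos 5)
Your first inclusion is fine, but the reverse inclusion contains a genuine error: you identify $(\fin,\fin)_n\left(C\left(X\right)\right)$ with $B_n\left(X\right)$ on perfectly normal spaces, and this identification is false. The classical equal (quasi-normal) Baire classes are strictly smaller than the ordinary Baire classes; the paper's proof of Theorem \ref{b} in fact relies on the strict inclusions $(\fin,\fin)_{n}\left(C\left(\R\right)\right)\varsubsetneq B_{n}\left(\R\right)\varsubsetneq(\fin,\fin)_{n+1}\left(C\left(\R\right)\right)$ to separate the three c-types. Consequently, placing $f$ in $B_1(X)$ via Lemma \ref{3} plus the Laczkovich--Rec{\l}aw theorem (and, at higher levels, in $B_n(X)$) does not place $f$ in $(\fin,\fin)_n\left(C\left(X\right)\right)$; what your argument actually establishes is only $(\I,\J)_n\left(C\left(X\right)\right)\subseteq B_n\left(X\right)$. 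That upper bound is essentially Proposition \ref{asdf2}, which already holds under the weaker hypothesis that $\I\sqcup A$ is diagonalizable for each single $A\in\J$ (i.e., it is the bound appropriate to the second c-type), and it cannot yield the collapse to the classical equal Baire classes that Proposition \ref{a} asserts. The same defect infects your inductive step, where $\bigcup_{\beta<n}(\fin,\fin)_\beta\left(C\left(X\right)\right)$ is again equated with a classical Baire class.

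The missing idea is that the full hypothesis, on sequences $(A_n)_{n\in\omega}\subset\J$ rather than single sets, must be used to convert the $(\I,\J)$-equal convergence itself into genuine $(\fin,\fin)$-equal convergence of functions from the lower equal Baire classes, not merely into pointwise $\K$-convergence. Lemma \ref{3} deliberately discards the equal structure in favour of pointwise convergence with respect to $\K=\I\sqcup(A_k)_{k\in\omega}$, which is exactly the information you need to retain. The paper's proof is verbatim the proof of \cite[Theorem 5.5]{filipow-szuca-I-Baire}: there one uses the $\K^*$-universal sets $\omega$-diagonalizing $\K$ to choose finite blocks of indices and assemble from the given sequence a new sequence converging quasi-normally (equally, in the $(\fin,\fin)$ sense) to $f$, which is what keeps the limit inside $(\fin,\fin)_n\left(C\left(X\right)\right)$ rather than merely inside $B_n\left(X\right)$.
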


\begin{proof}
The proof is the same as the proof of \cite[Theorem 5.5]{filipow-szuca-I-Baire}.
\end{proof}

\begin{lemma}[Filip{\'o}w and Szuca, {\cite[Lemma 2.2]{filipow-szuca-I-Baire}}]\label{nbvcxz}
Let $X$ be a topological space, $\I$ be an ideal such that $\fin\otimes\fin\leq_{K}\mathcal{I}$ and $1\leq \alpha<\omega_{1}$. Then $(\fin,\fin)_{\alpha+1}\left(C\left(X\right)\right)\subseteq (\I,\fin)_{\alpha}\left(C\left(X\right)\right)$.
\end{lemma}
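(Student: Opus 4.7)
The plan is to unfold the nested classical equal-convergence structure of $(\fin,\fin)_{\alpha+1}(C(X))$ into a single $(\I,\fin)$-equal-convergent sequence one Baire level lower, using a bijection coming from Theorem~\ref{finfin}.

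First I would unpack the hypothesis $f\in(\fin,\fin)_{\alpha+1}(C(X))$ into a double sequence. Fix $(f_n)_{n\in\omega}$ with $(f_n)_n\xrightarrow{(\fin,\fin)-e}f$ witnessed by some $(\epsilon_n)_n\xrightarrow{\fin}0$, where each $f_n$ lies in $(\fin,\fin)_{\beta_n}(C(X))$ for some $\beta_n\leq\alpha$. When $\beta_n=\alpha$, I would further unpack $f_n$ as an $(\fin,\fin)$-equal limit of a sequence $(f_{n,k})_k$ in $\bigcup_{\gamma<\alpha}(\fin,\fin)_\gamma(C(X))$ witnessed by some $(\eta_{n,k})_k\xrightarrow{\fin}0$; when $\beta_n<\alpha$, I would take the constant sequence $f_{n,k}:=f_n$ with $\eta_{n,k}:=1/(k+1)$. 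Since passing to a subsequence of a classically equal convergent sequence preserves equal convergence with the corresponding subsequence of witnesses, I may additionally arrange the $\eta_{n,k}$ to decay in $k$ as fast as desired.

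Next, by Theorem~\ref{finfin}, the hypothesis $\fin\otimes\fin\leq_{K}\I$ yields a bijection $\phi:\omega\to\omega\times\omega$ for which $A\in\fin\otimes\fin$ if and only if $\phi^{-1}[A]\in\I$. I would set $g_m:=f_{\phi(m)}$ for each $m\in\omega$; the monotonicity of the classes $(\cdot,\cdot)_\gamma(\cdot)$ with respect to ideal inclusion (established in the unlabeled monotonicity lemma preceding Lemma~\ref{4}) then gives $g_m\in\bigcup_{\gamma<\alpha}(\I,\fin)_\gamma(C(X))$, which is the correct pool of functions for membership in $(\I,\fin)_\alpha(C(X))$.

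The remaining task, which I expect to be the genuinely delicate step, is to exhibit a rate $(\delta_m)_m$ with $\delta_m\xrightarrow{\fin}0$ such that $\{m:|g_m(x)-f(x)|\geq\delta_m\}\in\I$ for every $x\in X$. The triangle inequality bounds $|f_{n,k}(x)-f(x)|$ by $\epsilon_n+\eta_{n,k}$ as soon as $\phi(m)=(n,k)$ avoids the set $T_x:=F_x\times\omega\cup\bigcup_{n\notin F_x}\{n\}\times K_x^n$, where $F_x$ and $K_x^n$ are the finite sets of indices at which the two equal-convergences fail; since $T_x\in\fin\otimes\fin$, the bijection $\phi$ sends it to the $\I$-small set $\phi^{-1}[T_x]$. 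The principal obstacle is reconciling the classical decay $\delta_m\to 0$ with the fact that the $\epsilon_n$-contribution to the natural bound corresponds to entire columns $F_x\times\omega$ whose $\phi$-preimages are $\I$-small but infinite in $\omega$; one cannot simply set $\delta_m=\epsilon_{n_m}+\eta_{n_m,k_m}$, since this need not tend to zero classically. Resolving this tension is the place where the Katětov-reducibility hypothesis is used essentially, and the detailed diagonal construction carrying it out is the one given by Filipów and Szuca in \cite[Lemma~2.2]{filipow-szuca-I-Baire}.
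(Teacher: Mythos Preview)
The paper does not supply its own proof of this lemma: it is quoted verbatim from \cite[Lemma~2.2]{filipow-szuca-I-Baire} and used as a black box, with no argument given here. So there is nothing in the present paper to compare your attempt against.

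As to the proposal itself, there is a genuine gap. Your setup is reasonable --- unpack into a double array $(f_{n,k})$, invoke Theorem~\ref{finfin} to pass from $\fin\otimes\fin\leq_K\I$ to a bijection $\phi$, and transport along $\phi$ --- and you correctly isolate the essential difficulty: producing a single sequence $(\delta_m)_m$ that tends to $0$ \emph{classically} while keeping each exceptional set $\{m:|g_m(x)-f(x)|\geq\delta_m\}$ in $\I$. But you then resolve this by pointing to ``the detailed diagonal construction \dots\ given by Filip\'ow and Szuca in \cite[Lemma~2.2]{filipow-szuca-I-Baire}.'' That is exactly the lemma you are being asked to prove, so the proposal is circular: it is an outline of the preliminaries followed by a citation of the result itself. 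The naive choice $\delta_m=\varepsilon_{n}+\eta_{n,k}$ for $\phi(m)=(n,k)$ does \emph{not} tend to $0$ (since $\phi^{-1}[\{n\}\times\omega]$ is infinite and $\varepsilon_n>0$), and nothing in your text explains how to get around this; that is precisely the content of the cited lemma.

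One minor inaccuracy: Theorem~\ref{finfin} (equivalence of $\leq_K$ and $\sqsubseteq$ for $\fin\otimes\fin$) yields a bijection $\phi$ with $A\in\fin\otimes\fin\Rightarrow\phi^{-1}[A]\in\I$, not the biconditional you state. Only this one direction is needed, so this does not affect the outline.
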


\begin{proposition}\label{asdf3}
Let $X$ be a topological space and $1\leq \alpha<\omega_{1}$. Suppose that $\I$ and $\J$ are ideals on $\omega$ such that there exists $A\in \J$ with $\fin\otimes\fin\sqsubseteq\mathcal{I}\sqcup A$. Then $(\mathcal{\fin},\mathcal{\fin})_{\alpha+1}\left(C\left(X\right)\right)\subseteq (\mathcal{I},\mathcal{J})_{\alpha}\left(C\left(X\right)\right)$.
\end{proposition}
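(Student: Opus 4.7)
The plan is to deduce the result from Lemma \ref{nbvcxz} by a simple comparison between the operators $(\I\sqcup A,\fin)$ and $(\I,\J)$.

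First I would invoke Lemma \ref{nbvcxz} for the ideal $\I\sqcup A$. Since $\sqsubseteq$ trivially implies $\leq_K$, the hypothesis $\fin\otimes\fin\sqsubseteq\I\sqcup A$ gives $\fin\otimes\fin\leq_K\I\sqcup A$, so Lemma \ref{nbvcxz} yields
$$(\fin,\fin)_{\alpha+1}(C(X))\subseteq (\I\sqcup A,\fin)_\alpha(C(X)).$$

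Next I would prove the single-step inclusion $(\I\sqcup A,\fin)(\mathcal{F})\subseteq (\I,\J)(\mathcal{F})$ for an arbitrary family $\mathcal{F}\subseteq\mathbb{R}^X$. Take $f\in (\I\sqcup A,\fin)(\mathcal{F})$, witnessed by $(f_n)\subseteq\mathcal{F}$ and positive reals $\varepsilon_n\xrightarrow{\fin}0$ with $\{n:|f_n(x)-f(x)|\geq\varepsilon_n\}\in\I\sqcup A$ for every $x$. Define
$$\varepsilon'_n=\begin{cases}\varepsilon_n & \text{if } n\notin A,\\ n+1 & \text{if } n\in A.\end{cases}$$
Then $(\varepsilon'_n)\xrightarrow{\J}0$, because for each $\delta>0$,
$$\{n:\varepsilon'_n\geq\delta\}\subseteq A\cup\{n\notin A:\varepsilon_n\geq\delta\}\in\J,$$
the second set being finite. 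For each $x\in X$, decompose $\{n:|f_n(x)-f(x)|\geq\varepsilon_n\}=M_x\cup N_x$ with $M_x\in\I$ and $N_x\subseteq A$. Then
$$\{n:|f_n(x)-f(x)|\geq\varepsilon'_n\}\subseteq M_x\cup\{n\in A:|f_n(x)-f(x)|\geq n+1\},$$
and the second set on the right is finite (it depends on the fixed real $f(x)$ and the fixed reals $f_n(x)$). Hence the left-hand side belongs to $\I$, which gives $f\in (\I,\J)(\mathcal{F})$.

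Finally, I would lift this base inclusion to every level by transfinite induction on $\beta\leq\alpha$: assuming $(\I\sqcup A,\fin)_\gamma(\mathcal{F})\subseteq (\I,\J)_\gamma(\mathcal{F})$ for all $\gamma<\beta$, the monotonicity of the operator in its family argument combined with the base case yields
$$(\I\sqcup A,\fin)_\beta(\mathcal{F})=(\I\sqcup A,\fin)\Bigl(\bigcup_{\gamma<\beta}(\I\sqcup A,\fin)_\gamma(\mathcal{F})\Bigr)\subseteq (\I,\J)\Bigl(\bigcup_{\gamma<\beta}(\I,\J)_\gamma(\mathcal{F})\Bigr)=(\I,\J)_\beta(\mathcal{F}).$$
Specializing to $\mathcal{F}=C(X)$ and $\beta=\alpha$ and chaining with the first step completes the proof.

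There is no serious obstacle here; the only subtle point is the choice of the modified witness $\varepsilon'_n$: making it diverge on $A$ is what forces the portion of the exceptional set lying in $A$ to collapse to a finite remainder, so that the full exceptional set moves from $\I\sqcup A$ into $\I$, while the $\J$-convergence of $(\varepsilon'_n)$ to $0$ is preserved by $A\in\J$.
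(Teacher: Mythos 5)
Your overall architecture (Lemma \ref{nbvcxz} applied to $\I\sqcup A$, a single-step comparison $(\I\sqcup A,\fin)(\mathcal{F})\subseteq(\I,\J)(\mathcal{F})$, then transfinite induction on the levels) is viable, but the justification of the single-step inclusion has a genuine gap: the claim that $\{n\in A:\ |f_n(x)-f(x)|\geq n+1\}$ is finite is false in general. The hypothesis $\{n:\ |f_n(x)-f(x)|\geq\varepsilon_n\}\in\I\sqcup A$ puts no constraint whatsoever on the functions $f_n$ with $n\in A$, since that whole block of indices is absorbed into $A$; these $f_n$ are arbitrary members of $\mathcal{F}$. For instance, take $f_n$ to be the constant function $2^n$ for $n\in A$ (and $f=0$): then $\{n\in A:\ |f_n(x)-f(x)|\geq n+1\}$ is cofinite in $A$ for every $x$, and in the only interesting case $A\in\J\setminus\I$ this set is not in $\I$, so your witness $(f_n,\varepsilon'_n)$ fails to exhibit $(\I,\J)$-equal convergence. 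The divergent-$\varepsilon$ trick from Lemma \ref{5} tames a \emph{single fixed} function (there $|g(x)-f(x)|$ is one real, so it exceeds $n+1$ only finitely often); it does not tame a sequence whose values at $x$ may grow faster than $n$. The parenthetical ``it depends on the fixed reals $f_n(x)$'' is exactly where the argument breaks.

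The repair is small and uses the same idea correctly: in the single-step lemma, modify not only the $\varepsilon_n$'s but also the functions on $A$, replacing $f_n$ for $n\in A$ by one fixed $g\in\mathcal{F}$ (say $g=f_0$; $\mathcal{F}\neq\emptyset$ in the nontrivial case). Then $\{n\in A:\ |g(x)-f(x)|\geq n+1\}$ is finite for each $x$, the exceptional set for $(\varepsilon'_n)$ is contained in $M_x$ together with a finite set, hence lies in $\I$, and $(\varepsilon'_n)\xrightarrow{\J}0$ as you argued; your induction through the levels then goes through unchanged. With this fix your proof is correct and is organized differently from the paper's: the paper instead observes that $\fin\otimes\fin\leq_K\I\upharpoonright(\omega\setminus A)$, applies Lemma \ref{nbvcxz} on $\omega\setminus A$, handles the block $A$ by orthogonality of $\I\upharpoonright A$ and $\mathcal{P}(A)$ via Lemma \ref{5}, and glues the two halves with Lemma \ref{4}; that route packages the same constant-function-with-divergent-$\varepsilon$ idea through the disjoint-sum decomposition rather than through a direct comparison of the operators.
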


\begin{proof}
Let $A\in\J$ be such that $\fin\otimes\fin\sqsubseteq\mathcal{I}\sqcup A$. Then $\fin\otimes\fin\leq_{K}\mathcal{I}\upharpoonright (\omega\setminus A)$ and $$(\fin,\fin)_{\alpha+1}\left(C\left(X\right)\right)\subseteq (\I\upharpoonright (\omega\setminus A),\fin (\omega\setminus A))_{\alpha}\left(C\left(X\right)\right)$$
by Lemma \ref{nbvcxz}. It follows that
$$(\fin,\fin)_{\alpha+1}\left(C\left(X\right)\right)\subseteq (\I\upharpoonright (\omega\setminus A),\J\upharpoonright (\omega\setminus A))_{\alpha}\left(C\left(X\right)\right).$$

Since the ideals $\mathcal{I}\upharpoonright A$ and $\mathcal{J}\upharpoonright A=\mathcal{P}(A)$ are orthogonal, we have $$(\mathcal{\fin},\mathcal{\fin})_{\alpha+1}\left(C\left(X\right)\right)\subset\mathbb{R}^X\subset (\mathcal{I}\upharpoonright A,\mathcal{J}\upharpoonright A)_{\alpha}\left(C\left(X\right)\right)$$ by Lemma \ref{5}. The conclusion follows from Lemma \ref{4}.
\end{proof}

\subsection{The second c-type}

In this subsection we characterize $(\mathcal{I},\mathcal{J})\left(C\left(X\right)\right)$ for all pairs of ideals $(\mathcal{I},\mathcal{J})$ of the second c-type.

Let $\Sigma_{\alpha}^{0}\left(X\right)$ and $\Pi_{\alpha}^{0}\left(X\right)$, for $0<\alpha<\omega_{1}$, denote the additive and multiplicative Borel classes of subsets of $X$, respectively. 

\begin{lemma}[{\cite[Proposition 3.14]{csaszar-laczkovich-1979}}]\label{jkgfs}
Let $X$ be a perfectly normal topological space, $f:X\rightarrow \mathbb{R}$ and $1\leq \alpha<\omega_{1}$. Then $f$ is of Baire class $\alpha$ if and only if $f$ is $\Sigma_{\alpha+1}^{0}\left(X\right)$-measurable. 
\end{lemma}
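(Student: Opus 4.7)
The plan is to prove this by transfinite induction on $\alpha$, following the classical Lebesgue--Hausdorff theorem. Perfect normality is what replaces metrizability in the usual proof: it is equivalent to asserting that every open subset of $X$ is $F_\sigma$ (and every closed subset is $G_\delta$), so that $\Sigma_2^0(X)$ and $\Pi_2^0(X)$ sit one rung above $\Sigma_1^0(X)$ and $\Pi_1^0(X)$ in a well-behaved way; together with Urysohn's lemma this is the only topological input beyond what the proof needs.

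For the "only if" direction, suppose $f$ is of Baire class $\alpha$, say $f=\lim_n f_n$ pointwise with each $f_n$ of Baire class $\beta_n<\alpha$. For every $a\in\mathbb{R}$ one has
\[
f^{-1}\bigl((a,\infty)\bigr)=\bigcup_{k\geq 1}\bigcup_{n\in\omega}\bigcap_{m\geq n}f_m^{-1}\bigl([a+\tfrac{1}{k},\infty)\bigr).
\]
At the base case $\alpha=1$ each $f_m^{-1}([a+1/k,\infty))$ is closed, so the right-hand side is $F_\sigma=\Sigma_2^0(X)$. At the inductive step the inductive hypothesis places $f_m^{-1}([a+1/k,\infty))$ in $\Pi_{\beta_m+1}^0(X)\subseteq\Pi_\alpha^0(X)$, the countable intersection stays in $\Pi_\alpha^0(X)$, and the double countable union lands in $\Sigma_{\alpha+1}^0(X)$. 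Letting $a$ range over $\mathbb{Q}$ and combining gives $\Sigma_{\alpha+1}^0$-measurability of $f$.

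For the converse, reduce first to bounded $f$ by composing with an order-homeomorphism $\mathbb{R}\to(0,1)$ (this preserves $\Sigma_{\alpha+1}^0$-measurability). Given $f$ bounded and $\Sigma_{\alpha+1}^0$-measurable, partition its range by intervals of length $2^{-n}$; the preimages of suitable half-open such intervals belong to the ambiguous class $\Sigma_{\alpha+1}^0(X)\cap\Pi_{\alpha+1}^0(X)$. I would then prove, by a parallel transfinite induction, that the characteristic function of any set $A\in\Sigma_{\alpha+1}^0(X)\cap\Pi_{\alpha+1}^0(X)$ is of Baire class $\alpha$ on $X$: at the base case $\alpha=1$ one writes $A=\bigcup_k F_k$ and $X\setminus A=\bigcup_k G_k$ with the $F_k$'s and $G_k$'s closed and increasing, applies Urysohn's lemma to separate $F_k$ from $G_k$ by continuous functions $\varphi_k\colon X\to[0,1]$, and checks that $\chi_A=\lim_k\varphi_k$ pointwise; the successor and limit steps are obtained by feeding the lower-class building blocks into the inductive hypothesis. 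Finite sums of such characteristic functions are again in $B_\alpha(X)$, so the step-function approximants of $f$ are of Baire class $\alpha$, and their uniform convergence (hence pointwise convergence) to $f$ places $f$ in $B_\alpha(X)$ as required.

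The main obstacle is the "if" direction, and specifically the recursive construction of Baire class $\alpha$ representatives for characteristic functions of ambiguous class sets. In a metric space the required "smooth separations" are built effortlessly from distance functions; in a perfectly normal space one has to generate them instead by iterating Urysohn's lemma alternately on the $F_\sigma$-presentation of $A$ and the $G_\delta$-presentation of $X\setminus A$ (and their higher-order analogues), and the bookkeeping needed to keep each newly introduced function in the correct Baire class across the successor and limit stages of the recursion is the delicate technical point of the entire proof.
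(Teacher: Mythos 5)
The paper does not actually prove this lemma -- it is quoted verbatim from C\'asz\'ar--Laczkovich \cite[Proposition 3.14]{csaszar-laczkovich-1979} -- so your argument can only be judged on its own merits. Your ``only if'' direction is fine, but the ``if'' direction has a genuine gap at the sentence asserting that ``the preimages of suitable half-open such intervals belong to the ambiguous class $\Sigma_{\alpha+1}^{0}(X)\cap\Pi_{\alpha+1}^{0}(X)$''. For a $\Sigma^{0}_{\alpha+1}$-measurable $f$ the set $f^{-1}\left[[a,b)\right]=f^{-1}\left[(-\infty,b)\right]\cap f^{-1}\left[[a,\infty)\right]$ is only the intersection of a $\Sigma^{0}_{\alpha+1}$ set with a $\Pi^{0}_{\alpha+1}$ set, hence a priori one level too high, and no choice of endpoints rescues this in general. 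Already for $\alpha=1$ and $X=[0,1]^2$: let $(q_n)$ enumerate $\Q\cap[0,1]$, put $g(x)=0$ for irrational $x$, $g(q_n)=-1/n$, and $f(x,y)=y+\tfrac{1}{10}g(x)$. One checks that $\{f<a\}$ and $\{f>a\}$ are $F_\sigma$ for every $a$, so $f$ is $\Sigma^0_2$-measurable (indeed Baire class $1$), yet for every $a\in(0,1)$ and $b>a$ the trace of $f^{-1}\left[[a,b)\right]$ on the line $y=a$ is exactly the set of irrationals, so this preimage is not even $F_\sigma$; consequently no partition of the range into half-open intervals of small mesh has ambiguous (or even $\Sigma^0_2$) preimages. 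The standard repair is not a choice of ``suitable'' endpoints but the $\sigma$-reduction property of $\Sigma^{0}_{\alpha+1}$ (equivalently, first separation for $\Pi^{0}_{\alpha+1}$): cover the range by overlapping open intervals of length $<2^{-n}$, take their $\Sigma^{0}_{\alpha+1}$ preimages, and reduce this countable cover of $X$ to a disjoint one; disjointness together with the fact that the union is all of $X$ forces each reduced piece into $\Sigma_{\alpha+1}^{0}(X)\cap\Pi_{\alpha+1}^{0}(X)$, and reduction does hold here because in a perfectly normal space every $\Sigma^{0}_{\alpha+1}$ set is a countable union of sets lying in that ambiguous class (the same monotonicity of the hierarchy you invoke at the start). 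With that, your Urysohn argument for characteristic functions of ambiguous sets (which is correct at the base case) can be applied to the pieces of the reduced partition.

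A second, smaller slip: at the end you write that the ``uniform convergence (hence pointwise convergence)'' of the step-function approximants places $f$ in $B_\alpha(X)$. Pointwise limits of Baire class $\alpha$ functions only give class $\alpha+1$; what you need is the closure of $B_\alpha(X)$ under \emph{uniform} limits, which is a separate classical lemma (proved, e.g., by a telescoping-series and diagonal argument valid over an arbitrary domain) and must be stated and used explicitly. Likewise, the successor and limit stages of your induction for characteristic functions implicitly need a separation-type lemma to produce class-$<\alpha$ functions equal to $1$ on one lower-level piece and $0$ on another; you flag this as ``delicate bookkeeping'', but as written it is not carried out. The overall architecture (Lebesgue--Hausdorff scheme, perfect normality used for the monotone hierarchy and via Urysohn) is the right one; the argument becomes complete once the reduction/separation step and the uniform-limit lemma are supplied.
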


\begin{proposition}\label{asdf2}
Let $X$ be a perfectly normal topological space. Suppose that $\I$ and $\J$ are ideals on $\omega$ such that $\left(\mathcal{I}\sqcup A\right)$ is $\omega$-diagonalizable by $(\mathcal{I}\sqcup A)^*$-universal sets for every $A\in \J$. Then $(\I,\J)_{\alpha}\left(C(X)\right)\subseteq  B_{\alpha}\left(X\right)$ for every $1\leq \alpha<\omega_{1}$.
\end{proposition}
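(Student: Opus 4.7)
The plan is transfinite induction on $\alpha$, adapting the Laczkovich--Rec{\l}aw / Filip\'{o}w--Szuca argument for pointwise $\I$-limits of continuous functions to the $(\I,\J)$-equal setting. Throughout I tacitly assume $\I$ and $\J$ are non-orthogonal, since otherwise $(\I,\J)(C(X))=\mathbb{R}^X$ by Lemma \ref{5} and the conclusion already fails for $\alpha=1$.

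For the base case $\alpha=1$, let $f\in(\I,\J)_1(C(X))$ have witnesses $(f_n)\subseteq C(X)$ and $(\varepsilon_n)\xrightarrow{\J}0$. Lemma \ref{3} yields $(f_n)\xrightarrow{\I\sqcup(A_k)_{k\in\omega}}f$ with the dyadic partition $A_k\in\J$; set $B_N=\bigcup_{k<N}A_k\in\J$. By hypothesis each $\I\sqcup B_N$ is $\omega$-diagonalizable by $(\I\sqcup B_N)^*$-universal sets, witnessed by a family $(\mathcal{Z}_K^N)_K=(\{A^N_{K,l}:l\in\omega\})_K$, chosen coherently across $N$ (e.g.\ from Player II's winning strategy in $G_{2}(\I\sqcup B_N)$). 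By Lemma \ref{jkgfs}, it suffices to show $\{f<c\}$ (and symmetrically $\{f>c\}$) is $F_\sigma$ for every $c\in\mathbb{R}$. The key identity is
\[
\{f<c\}=\bigcup_{m,N,K\in\omega}\bigcap_{l\in\omega}\bigcup_{n\in A^N_{K,l}}\{f_n\leq c-1/m\}.
\]
For "$\subseteq$": if $f(x)<c-2/m$, then $\{n:f_n(x)>c-1/m\}\subseteq\{n:|f_n(x)-f(x)|\geq 1/m\}\in\I\sqcup B_N$ for some $N$ by Lemma \ref{3}, and diagonalizability gives $K$ forcing each $A^N_{K,l}$ to meet $\{n:f_n(x)\leq c-1/m\}$. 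For "$\supseteq$": if $x$ witnesses the RHS at $(m,N,K)$ but $f(x)\geq c$, then $\{n:f_n(x)\leq c-1/m\}\in\I\sqcup B_{N^*}$ for some $N^*$; for $N\geq N^*$ the $(\I\sqcup B_N)^*$-universality of $\mathcal{Z}_K^N$ produces a contradicting $l$, while for $N<N^*$ the coherent choice of the $\mathcal{Z}_K^N$'s must guarantee that some $A^N_{K,l}$ reaches beyond $B_{N^*}$, again contradicting the RHS (non-orthogonality keeps each $\I\sqcup B_N$ proper, making these contradictions meaningful). Granted the identity, $\{f<c\}$ is $F_\sigma$: $\{f_n\leq c-1/m\}$ is closed by continuity of $f_n$, a finite union followed by a countable intersection remain closed, and the countable union over $(m,N,K)$ yields $F_\sigma$.

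For the inductive step ($\alpha>1$), the induction hypothesis presents $f$ as an $(\I,\J)$-equal limit of $(g_n)$ with $g_n\in B_{\beta_n}$, $\beta_n<\alpha$. The very same identity applies with $g_n$ in place of $f_n$; now $\{g_n\leq c-1/m\}\in\Pi^0_{\beta_n+1}\subseteq\Pi^0_\alpha$ by Lemma \ref{jkgfs} replaces closedness, and the identical hierarchy of operations places $\{f<c\}$ in $\Sigma^0_{\alpha+1}$, hence $f\in B_\alpha(X)$ via Lemma \ref{jkgfs}. The main obstacle is the "$\supseteq$" inclusion in the case $N<N^*$: the bare $\omega$-diagonalizability of $\I\sqcup B_N$ is not enough. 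One needs the $\mathcal{Z}_K^N$'s to possess "spreading" properties beyond the defining clauses---so that for large $K$ the finite sets $A^N_{K,l}$ must eventually escape $B_{N^*}$ and thus cannot all meet a $B_{N^*}$-localized exceptional set. This coherent selection of the diagonalizing families and its bookkeeping against the dyadic partition $(A_k)$ produced by Lemma \ref{3} is the combinatorial heart of the adaptation of the Laczkovich--Rec{\l}aw / Filip\'{o}w--Szuca argument.
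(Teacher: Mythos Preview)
There is a real gap, and you identify it yourself: the ``$\supseteq$'' direction in the case $N<N^*$ does not follow. Your proposed remedy---choosing the families $\mathcal{Z}^N_K$ ``coherently'' from a winning strategy so that the finite sets $A^N_{K,l}$ eventually escape $B_{N^*}$---is not justified by the definition of $\omega$-diagonalizability or by anything about the game $G_2$. Player~II need only return a finite set disjoint from Player~I's current move in $\I\sqcup B_N$; since $B_{N^*}\setminus B_N=A_N\cup\dots\cup A_{N^*-1}\in\J$ need not lie in $\I\sqcup B_N$, nothing forces those finite sets out of $B_{N^*}$. So the appeal to ``spreading'' is wishful.

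The difficulty disappears once you observe that the free parameter $N$ is spurious: in your own ``$\subseteq$'' computation, the $N$ produced by Lemma~\ref{3} is exactly (up to a shift) $m$, since $\{n:\varepsilon_n\geq 1/m\}=B_m$; and in ``$\supseteq$'' your $N^*$ is likewise $m$. Collapsing $N$ to $m$ gives the corrected identity
\[
\{f<c\}=\bigcup_{m,K\in\omega}\ \bigcap_{l\in\omega}\ \bigcup_{n\in A^{m}_{K,l}}\{f_n\leq c-1/m\},
\]
and both inclusions go through with no case split. This coupling of the threshold index with the level of the diagonalizing family is precisely the device the paper uses: it fixes $\varepsilon$, sets $A_n=\{k:\varepsilon/(n+1)\leq\varepsilon_k<\varepsilon/n\}$ (so the partition depends on $\varepsilon$), picks diagonalizing families $(\mathcal{Z}^n_N)_N$ for $\I\sqcup(A_0\cup\dots\cup A_n)$, and proves the two-sided equivalence
\[
|f(x)-y|<\varepsilon\ \Longleftrightarrow\ \exists_{n}\exists_{N}\forall_{k}\exists_{l\in A^n_{N,k}}\ |f_l(x)-y|\leq\varepsilon\!\left(1-\tfrac{1}{n}\right),
\]
where the single index $n$ simultaneously controls the threshold $\varepsilon/n$ and the ideal $\I\sqcup(A_0\cup\dots\cup A_n)$ whose universal sets are used. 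With this correction your argument becomes essentially the paper's; the transfinite induction and the use of Lemma~\ref{jkgfs} are fine as written.
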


\begin{proof}
This proof is based on the proof of \cite[Lemma 3.1]{filipow-szuca-I-Baire}. 

We prove the result by transfinite induction on $\alpha$. Let $1\leq \alpha<\omega_{1}$ and assume that $(\I,\J)_{\gamma}\left(C(X)\right)\subseteq  B_{\gamma}\left(X\right)$ for every $\gamma< \alpha$. Suppose that $(f_n)_{n\in\omega}\xrightarrow{(\mathcal{I},\mathcal{J})-e} f$, where $f_{n}\in (\I,\J)_{\beta_{n}}\left(C(X)\right)$ and $\beta_{n}<\alpha$ for each $n\in \omega$. Then there exists a sequence $(\varepsilon_n)_{n\in\omega}\xrightarrow{\J}0$ such that $\left\{n\in \omega: |f_{n}(x)-f(x)|\geq\varepsilon_{n}\right\}\in \I$ for every $x\in X$. 

We need to show that $f\in B_{\alpha}\left(X\right)$. Let $\varepsilon >0$, $y\in \R$ and $x\in X$. The conclusion will follow from the fact that $f^{-1}[(y-\varepsilon,y+\varepsilon)]\in \Sigma_{\alpha+1}^{0}\left(X\right)$ for any $\varepsilon >0$ and $y\in \R$ (by Lemma \ref{jkgfs}). Hence, let $\varepsilon >0$ and $y\in \R$.

Define $A_0=\{k\in\omega:\ \varepsilon_k\geq\varepsilon\}$ and $A_n=\{k\in\omega:\ \frac{\varepsilon}{n+1}\leq \varepsilon_k <\frac{\varepsilon}{n}\}$ for all $n\geq 1$. Clearly, $(A_{n})_{n\in \omega}\subseteq \J$. For each $n\in \omega$ pick a family $(\mathcal{Z}_{N}^{n})_{N\in \omega}$, $\mathcal{Z}_{N}^{n}=\left\{A_{N,k}^{n}:\ k\in \omega\right\}$, of $\left(\I\sqcup (A_{0}\cup ...\cup A_{n})\right)^*$-universal sets which $\omega$-diagonalize $\I\sqcup (A_{0}\cup ...\cup A_{n})$.

We will show that 
\begin{equation}
|f\left(x\right)-y|<\varepsilon\Longleftrightarrow \exists_{n\in \omega} \exists_{N\in \omega} \forall_{k\in \omega}\exists_{l\in A_{N,k}^{n}}\ |f_{l}\left(x\right)-y|\leq\varepsilon\cdot \left(1-\frac{1}{n}\right).
\label{eq:1}
\end{equation}
This will end the proof. Indeed, once this is done, we have
$$\displaystyle f^{-1}\left[B\left(y,\varepsilon\right)\right]=\bigcup_{n\in \omega}\bigcup_{N\in \omega}\bigcap_{k\in \omega}\bigcup_{l\in A_{N,k}^{n}}f^{-1}_{l}\left[\overline{B} \left(y,\varepsilon\cdot \left(1-\frac{1}{n}\right)\right)\right]\in \Sigma_{\alpha+1}^{0}\left(X\right),$$ 
where $B(z,r)$ denotes the open ball of radius $r>0$ and center $z\in\R$, by the induction assumption (note that $A_{N,k}^{n}$ is finite).

We proceed to showing (\ref{eq:1}). Firstly, we deal with the implication from left to right. Let $f\left(x\right)\in B\left(y,\varepsilon\right)$. There are $n_{1}\in \omega$ and $\delta>0$ such that $B\left(f\left(x\right),\delta\right)\subseteq \overline{B} \left(y,\varepsilon\cdot \left(1-\frac{1}{n_{1}}\right)\right)$. Take $n>n_{1}$ such that $\frac{\varepsilon}{n}<\delta$ and denote
$$F=\left\{l\in \omega:\ f_{l}\left(x\right)\in \overline{B} \left(y,\varepsilon\cdot \left(1-\frac{1}{n}\right)\right)\right\}.$$ 
Then
$$F\supseteq \left\{l\in \omega: f_{l}\left(x\right)\in B\left(f\left(x\right),\delta\right)\right\}\in \left(\I\sqcup (A_{0}\cup ...\cup A_{n-1})\right)^*.$$
Hence, there is $N\in \omega$ such that $F\cap A_{N,k}^{n-1}\neq \emptyset$ for every $k\in \omega$ (since $(\mathcal{Z}_{N}^{n-1})_{N\in \omega}$ $\omega$-diagonalize $\I\sqcup (A_{0}\cup ...\cup A_{n-1})$).

Now we deal with the second implication of (\ref{eq:1}).  Suppose that there are $n,N\in \omega$ such that for every $k\in \omega$ there is $l\in A_{N,k}^{n}$ with $f_{l}\left(x\right)\in \overline{B} \left(y,\varepsilon\cdot \left(1-\frac{1}{n}\right)\right)$. Observe that $G=\left\{m\in \omega:\ |f_{m}(x)-f(x)|<\frac{\varepsilon}{n}\right\}\in \left(\I\sqcup (A_{0}\cup ...\cup A_{n})\right)^*$. Since $\mathcal{Z}_{N}^{n}$ is $\left(\I\sqcup (A_{0}\cup ...\cup A_{n})\right)^*$-universal, there is $k\in \omega$ such that $A_{N,k}^{n}\subseteq G$. By our assumption, there is also $l\in A_{N,k}^{n}$ such that $f_{l}\left(x\right)\in \overline{B} \left(y,\varepsilon\cdot \left(1-\frac{1}{n}\right)\right)$. Then $$|f(x)-y|\leq |f(x)-f_{l}(x)|+|f_{l}(x)-y|<\frac{\varepsilon}{n}+\varepsilon\left(1-\frac{1}{n}\right)=\varepsilon.$$
This finishes the entire proof.
\end{proof}

\begin{proposition}\label{asdf}
Let $X$ be a topological space. Suppose that $\I$ and $\J$ are ideals on $\omega$ such that there exists $(A_{n})_{n\in\omega}\subseteq \J$ with $\fin\otimes\fin\sqsubseteq \mathcal{I}\sqcup (A_n)_{n\in\omega}$. Then
$(\I,\J)_{\alpha}\left(C(X)\right)\supseteq  B_{\alpha}\left(X\right)$ for every $1\leq \alpha<\omega_{1}$.
\end{proposition}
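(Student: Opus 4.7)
The plan is to adapt the argument of Proposition \ref{asdf3}, replacing the single set $A\in\mathcal{J}$ by a decomposition of $\omega$ controlled by the sequence $(A_n)_{n\in\omega}$. Fix a bijection $\pi\colon\omega\to\omega^2$ realizing $\fin\otimes\fin\sqsubseteq\mathcal{I}\sqcup(A_n)_{n\in\omega}$ and, without loss of generality, assume $(A_n)_{n\in\omega}$ is a partition of $\omega$. For each $k$ the column pullback $B_k=\pi^{-1}[\{k\}\times\omega]$ lies in $\mathcal{I}\sqcup(A_n)$, so write $B_k=I_k\cup F_k$ with $I_k\in\mathcal{I}$ and $F_k\subseteq\bigcup_{i<N_k}A_i$ for a strictly increasing sequence $(N_k)_{k\in\omega}$. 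Put $C=\bigcup_k I_k$ and $B=\omega\setminus C=\bigcup_k F_k$. The disjointness of the $A_i$'s forces $A_i\cap C\in\mathcal{I}$ for every $i$ (since $A_i\cap I_k=\emptyset$ once $N_k>i$), hence $(\mathcal{I}\sqcup(A_n))\upharpoonright C=\mathcal{I}\upharpoonright C$ and the restriction $\pi\upharpoonright C$ witnesses $\fin\otimes\fin\leq_K\mathcal{I}\upharpoonright C$.

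I would then handle the two pieces separately. On $C$, Lemma \ref{nbvcxz} together with monotonicity yields $(\fin,\fin)_{\alpha+1}(C(X))\subseteq(\mathcal{I}\upharpoonright C,\mathcal{J}\upharpoonright C)_\alpha(C(X))$, which covers $B_\alpha(X)$ via the containment of Baire classes in iterated equal-convergence classes. On $B$, I would proceed by transfinite induction on $\alpha$, with the core work at $\alpha=1$: given $f=\lim_m g_m$ pointwise with $g_m\in C(X)$, I would take $f_n=g_{\pi_2(n)}$ for $n\in B$ and choose $\varepsilon_n$ combining $1/(\pi_1(n)+1)$ with $1/(k(n)+1)$, where $k(n)=\min\{i:n\in A_i\}$, so that $\{n\in B:\varepsilon_n\geq\delta\}$ sits inside a finite union of $F_k$'s together with a finite union of $A_i$'s, and therefore lies in $\mathcal{J}\upharpoonright B$. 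For each $x$ the bad set is governed by $\pi^{-1}[S_x]\cap B$ with $S_x=\{(k,m):|g_m(x)-f(x)|\geq 1/(k+1)\}\in\emptyset\otimes\fin\subseteq\fin\otimes\fin$, and the specific form of the decomposition $B_k=I_k\cup F_k$ together with the bound $F_k\subseteq\bigcup_{i<N_k}A_i$ must be exploited to push this intersection into $\mathcal{I}\upharpoonright B$. The induction step $\alpha>1$ is routine: apply the base-case construction layerwise, replacing approximants from $B_\beta(X)$ by approximants from $(\mathcal{I},\mathcal{J})_\beta(C(X))$ provided by the inductive hypothesis.

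Finally, combining the $B$- and $C$-contributions via the obvious iteration of Lemma \ref{4} applied to the splitting $(\mathcal{I},\mathcal{J})=(\mathcal{I}\upharpoonright B,\mathcal{J}\upharpoonright B)\oplus(\mathcal{I}\upharpoonright C,\mathcal{J}\upharpoonright C)$ delivers $B_\alpha(X)\subseteq(\mathcal{I},\mathcal{J})_\alpha(C(X))$ for every $1\leq\alpha<\omega_1$. The hardest step will be the $B$-side bookkeeping: verifying that with the subtle choice of $\varepsilon_n$ the bad set genuinely lies in $\mathcal{I}\upharpoonright B$ rather than merely in $(\mathcal{I}\sqcup(A_n))\upharpoonright B$. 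This is the analogue for continuous functions of the role played by the auxiliary function $\phi_B$ in the proof of Proposition \ref{q-type 2.2}, and is where the structural hypothesis $\fin\otimes\fin\sqsubseteq\mathcal{I}\sqcup(A_n)$ (rather than $\fin\otimes\fin\sqsubseteq\mathcal{I}\sqcup A$ for a single $A\in\mathcal{J}$) must be used in a genuinely new way.
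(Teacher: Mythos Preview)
Your decomposition $\omega=B\cup C$ and the treatment of the $C$-side are essentially identical to the paper's $G_2\cup G_1$ splitting and its $G_1$-argument (the paper also shows $\sigma\upharpoonright G_1$ witnesses $\fin\otimes\fin\leq_K\mathcal I\upharpoonright G_1$ and then invokes Lemma~\ref{nbvcxz}). So the architecture is right.

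The gap is precisely where you flag it: on the $B$-side your choice $f_n=g_{\pi_2(n)}$ does not work. Take any $x$ with $g_0(x)\neq f(x)$. Nothing prevents $\pi^{-1}[(k,0)]\in A_0\cap F_k$ for infinitely many $k$; call this infinite set $D\subset A_0\cap B$. Every $n\in D$ has $\pi_2(n)=0$, so once $\varepsilon_n<|g_0(x)-f(x)|$ (which must happen $\mathcal J$-often) the point $n$ lands in the bad set. Thus the bad set can have infinite intersection with $A_0$. Your proposed ``combination'' of $1/(\pi_1(n)+1)$ and $1/(k(n)+1)$ cannot repair this: taking the max only shrinks the bad set to a subset of $\pi^{-1}[S_x]\cap B$, and $\pi^{-1}[S_x]$ is only known to lie in $\mathcal I\sqcup(A_n)$, not in $\mathcal I$ --- the $(A_n)$-part of that decomposition can be infinite inside a single $A_i$. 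In short, with $f_n=g_{\pi_2(n)}$ there is no choice of $(\varepsilon_n)$ that forces the bad set into $\mathcal I\upharpoonright B$.

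The paper avoids this entirely. On $G_2$ it introduces the auxiliary ideals $\mathcal K=\{M\subseteq G_2:\forall k\ A_k\cap M\in\fin\}$ and $\mathcal L=\langle A_k\cap G_2\rangle$, observes that $\wlasnosc(\mathcal K,\mathcal L)$ fails, and invokes Lemma~\ref{wlasn} to pass from $\mathcal K$-convergence (obtained from any bijective reindexing of the classical sequence $(g_m)$) to $(\mathcal K,\mathcal L)$-equal convergence. The point is then purely order-theoretic: $\mathcal L\subset\mathcal J\upharpoonright G_2$ is immediate, and $\mathcal K\subset\mathcal I\upharpoonright G_2$ follows because any $M\in\mathcal K$ has $\sigma[M]\in\emptyset\otimes\fin$ (finite on each column since $M\cap\sigma^{-1}[\{k\}\times\omega]\subset\bigcup_{i\le N_k}A_i$ and $M\cap A_i$ is finite), hence $M\in\mathcal I\sqcup(A_n)$, and the finite-intersection property then kills the $(A_n)$-part. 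If you want to keep your hands-on style, the fix is to replace $f_n=g_{\pi_2(n)}$ by $f_n=g_{\rho(n)}$ for a \emph{bijection} $\rho\colon B\to\omega$ and take $\varepsilon_n=1/(k(n)+1)$; then the bad set intersected with $A_i$ is contained in $\rho^{-1}[\{m:|g_m(x)-f(x)|\ge 1/(i+1)\}]$, which is finite, so the bad set lies in $\mathcal K\subset\mathcal I\upharpoonright B$. This is exactly the content of Lemma~\ref{wlasn} unpacked in the present situation.
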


\begin{proof}
We prove the result by transfinite induction on $\alpha$. Let $1\leq \alpha<\omega_{1}$ and assume that $(\I,\J)_{\gamma}\left(C(X)\right)\supseteq  B_{\gamma}\left(X\right)$ for every $\gamma< \alpha$. 

Let $\left(A_{n}\right)_{n\in\omega}\subseteq \J$ be such that $\fin\otimes\fin\sqsubseteq \mathcal{I}\sqcup (A_n)_{n\in\omega}$. Then there is a bijection $\sigma \colon\omega\to\omega^2$ such that $\sigma^{-1} [M]\in\mathcal{I}\sqcup (A_n)_{n\in\omega}$ for any $M\in\fin\otimes\fin$. Without loss of generality we can assume that $(A_n)_{n\in\omega}$ is a partition of $\omega$. 

If there is $A\in \J$ such that $\fin\otimes\fin\sqsubseteq \mathcal{I}\sqcup A$, then we are done by Theorem \ref{asdf3}, since $B_\alpha (X)\subset (\fin,\fin)_{\alpha+1}(C(X))$. Suppose that $\mathcal{I}\sqcup A$ does not contain an isomorphic copy of $\fin\otimes\fin$ for every $A\in \J$. Then we can assume that $(A_n)_{n\in\omega}\subseteq \I^+$. 

For each $k\in \omega$ there are $N_k\in\omega$ and $C_k=\sigma^{-1} [\{k\}\times\omega]\setminus \bigcup_{n\leq N_{k}}A_{n}$ such that $C_{k}\in \I$. Without loss of generality we can assume additionally that $N_{0}< N_{1}<\ldots$ (in particular, $C_k\cap A_n=\emptyset$ whenever $n\leq k$) and $C_{k}=\emptyset$ if $\sigma^{-1} [\{k\}\times\omega]$ can be covered by finitely many $A_n$'s (in particular, each $C_k$ is infinite or empty). 

Define $T=\{k\in\omega:\ C_k\neq\emptyset\}$. Let $G_1=\bigcup_{k\in T}C_k$ and $G_2=\omega\setminus G_1$. We will show that $B_{\alpha}\left(X\right)\subset (\I\upharpoonright G_i,\J\upharpoonright G_i)_{\alpha}\left(C(X)\right)$ for $i=1,2$. It will finish the proof by Lemma \ref{4}.

Firstly, we deal with the set $G_1$. If $T$ is finite, then $G_1\in\I$ and we are done by Lemma \ref{5} (since $\I\upharpoonright G_1=\mathcal{P}(G_1)$ in this case). Suppose that $T$ is infinite. We will prove that $\fin\otimes\fin\leq_K \mathcal{I}\upharpoonright G_1$. Once this is done, we have
$$B_{\alpha}\left(X\right)\subset (\fin,\fin)_{\alpha+1}\left(C(X)\right)\subset (\I\upharpoonright G_i,\fin (G_i))_{\alpha}\left(C(X)\right)$$
by Lemma \ref{nbvcxz}. Hence,
$$B_{\alpha}\left(X\right)\subset (\I\upharpoonright G_i,\J\upharpoonright G_i)_{\alpha}\left(C(X)\right).$$

We claim that $\sigma\upharpoonright G_1\colon G_1\to\omega^2$ witnesses $\fin\otimes\fin\leq_K \mathcal{I}\upharpoonright G_1$. Take any $M\in \fin\otimes\fin$ with $M\subset\sigma [G_1]$. There exist $E\in \fin\otimes\emptyset$ and $F\in \emptyset\otimes\fin$ such that $M=E\cup F$. Since $C_k\subset \sigma^{-1} [\{k\}\times\omega]$ for each $k\in\omega$, we get that $\sigma^{-1}[E]$ is covered by finitely many $C_k$'s. Recall that $C_{k}\in \I$ for all $k\in\omega$. Hence, $\sigma^{-1}[E]$ is in $\I\upharpoonright G_1$. Now we deal with the set $F$. From the properties of $\sigma$ we have that $\sigma^{-1} [F]\in\mathcal{I}\upharpoonright G_1\sqcup (A_n\cap G_1)_{n\in\omega}$. Observe that $\sigma^{-1}[F]\cap A_{n}\subseteq \sigma^{-1}[F]\cap \bigcup_{k<n} C_k$. Indeed, it follows from the fact that $C_k\cap A_n=\emptyset$ whenever $n\leq k$. Moreover, $\sigma^{-1}[F]\cap \bigcup_{k<n} C_k$ is finite, since $F\in \emptyset\otimes\fin$ and $C_k\subset \sigma^{-1} [\{k\}\times\omega]$ for each $k\in\omega$. Therefore, $\sigma^{-1} [F]\in\mathcal{I}\upharpoonright G_1$. 

Now we deal with the set $G_2$. We will need two auxiliary ideals. Define an ideal 
$$\K=\left\{M\subseteq G_{2}:\ \forall_{k\in \omega}\ A_{k}\cap M\in \fin\right\}.$$
Let also $\mathcal{L}$ be an ideal on $G_{2}$ generated by the family $\left(A_{k}\cap G_2\right)_{k\in \omega}$. Recall that by $\wlasnosc(\K,\mathcal{L})$ we denote the following sentence: For every partition $(A_n)_{n\in\omega}\subset\mathcal{L}$ of $\bigcup\mathcal{L}$ there exists $S\notin\K$ such that $A_n\cap S\in\K$ for every $n\in\omega$ (cf. Lemma \ref{wlasn}). Therefore, $\wlasnosc\left(\K,\mathcal{L}\right)$ does not hold.

Fix $f\in B_{\alpha}\left(X\right)$. We will show that $f\in (\I,\J)_{\alpha}\left(C(X)\right)$. There is a sequence of functions in $\bigcup_{\gamma<\alpha}B_{\gamma}\left(X\right)$ which is $\K$-convergent to $f$ (recall that pointwise convergence implies ideal convergence for any ideal). From our induction assumption, this sequence is also in $\bigcup_{\gamma<\alpha} (\I,\J)_{\gamma}\left(C(X)\right)$. Then $f\in (\K,\mathcal{L})\left(\bigcup_{\gamma<\alpha} (\I,\J)_{\gamma}\left(C(X)\right)\right)$ by Lemma \ref{wlasn}, since $\wlasnosc\left(\K,\mathcal{L}\right)$ does not hold. 

Obviously, $\mathcal{L}\subset\J\upharpoonright G_2$. To finish the proof it suffices to show that $\K\subset\I\upharpoonright G_2$. Take $M\in\K$ and notice that $M\cap\sigma^{-1}[\{k\}\times\omega]\subset\bigcup_{i\leq N_k}A_i$ for any $k\in\omega$ (since $M\subset G_2$). Hence, $M\cap\sigma^{-1}[\{k\}\times\omega]$ is finite for every $k\in\omega$. It follows that $\sigma [M]\in \emptyset\otimes\fin$. By the properties of $\sigma$, we get that $M\in \mathcal{I}\upharpoonright G_2\sqcup (A_n\cap G_2)_{n\in\omega}$. Hence, $M\in \I\upharpoonright G_2$ by the definition of $\K$.
\end{proof}

\subsection{Definable ideals}

We are ready to prove the main theorem of this section, summarizing all of our previous considerations.

\begin{theorem}
\label{b}
Let $\I$ and $\J$ be non-orthogonal ideals on $\omega$ and $1\leq n<\omega$. Suppose that $\I$ is coanalytic.
\begin{enumerate}
	\item $(\mathcal{I},\mathcal{J})$ is of the first c-type if and only if $$(\mathcal{I},\mathcal{J})_{n}\left(C\left(X\right)\right)=(\mathcal{\fin},\mathcal{\fin})_{n}\left(C\left(X\right)\right)$$ for every perfectly normal topological space $X$.
	\item $(\mathcal{I},\mathcal{J})$ is of the second c-type if and only if $$(\mathcal{I},\mathcal{J})_{n}\left(C\left(X\right)\right)=B_{n}\left(X\right)$$ for every perfectly normal topological space $X$.
	\item $(\mathcal{I},\mathcal{J})$ is of the third c-type if and only if $$(\mathcal{I},\mathcal{J})_{n}\left(C\left(X\right)\right)\supset(\mathcal{\fin},\mathcal{\fin})_{n+1}\left(C\left(X\right)\right)$$ for every perfectly normal topological space $X$.
\end{enumerate}
\end{theorem}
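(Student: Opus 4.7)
The plan is to leverage Fact \ref{det2}: since $\I$ is coanalytic, the pair $(\I,\J)$ must fall into exactly one of the three c-types. So once the implications from left to right are established in all three parts, the reverse directions follow automatically, provided the three characterizing classes are pairwise distinct on some fixed space. I would verify the latter by taking $X=\R$ and invoking the classical fact that $(\fin,\fin)_n(C(\R))\subsetneq B_n(\R)\subsetneq (\fin,\fin)_{n+1}(C(\R))$: equal convergence is genuinely more restrictive than pointwise convergence at each finite Baire level. Thus none of the three candidate equalities/inclusions in (1)--(3) can hold simultaneously, and the ``only if'' directions are forced.

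For part (1), nothing more has to be said: assuming $(\I,\J)$ is of the first c-type, the definition tells us that $\I\sqcup(A_n)_{n\in\omega}$ is $\omega$-diagonalizable by $(\I\sqcup(A_n)_{n\in\omega})^\ast$-universal sets for every $(A_n)_{n\in\omega}\subseteq\J$, which is exactly the hypothesis of Proposition \ref{a}. Applying that proposition yields $(\I,\J)_n(C(X))=(\fin,\fin)_n(C(X))$ for every perfectly normal $X$. Similarly, for part (3), the third c-type hypothesis provides some $A\in\J$ with $\fin\otimes\fin\sqsubseteq\I\sqcup A$, and Proposition \ref{asdf3} (applied with $\alpha=n$) yields the desired inclusion $(\fin,\fin)_{n+1}(C(X))\subseteq(\I,\J)_n(C(X))$.

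Part (2) is where both of the ``second c-type'' lemmas come into play. On the one hand, second c-type guarantees that for every single $A\in\J$ the ideal $\I\sqcup A$ is $\omega$-diagonalizable by $(\I\sqcup A)^\ast$-universal sets; feeding this into Proposition \ref{asdf2} gives the inclusion $(\I,\J)_n(C(X))\subseteq B_n(X)$. On the other hand, second c-type also supplies some sequence $(A_n)_{n\in\omega}\subseteq\J$ with $\fin\otimes\fin\sqsubseteq\I\sqcup(A_n)_{n\in\omega}$, which is precisely the hypothesis of Proposition \ref{asdf}; applied with $\alpha=n$ it furnishes the reverse inclusion $B_n(X)\subseteq(\I,\J)_n(C(X))$. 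Together these give $(\I,\J)_n(C(X))=B_n(X)$.

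The only real obstacle is the justification that the three right-hand-side classes $(\fin,\fin)_n(C(\R))$, $B_n(\R)$, $(\fin,\fin)_{n+1}(C(\R))$ are genuinely different; without this the c-types could not be distinguished a posteriori by the form of the characterization. All of the nontrivial analytical work has already been packaged into Propositions \ref{a}, \ref{asdf2}, \ref{asdf} and \ref{asdf3}, so the present theorem is a combinatorial assembly of those results using Fact \ref{det2} to rule out ambiguity in the c-type.
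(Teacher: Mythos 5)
Your proposal is correct and follows essentially the same route as the paper: reduce to the left-to-right implications via Fact \ref{det2} together with the strict inclusions $(\fin,\fin)_n(C(\R))\varsubsetneq B_n(\R)\varsubsetneq(\fin,\fin)_{n+1}(C(\R))$, then cite Proposition \ref{a} for (1), Propositions \ref{asdf2} and \ref{asdf} for the two inclusions in (2), and Proposition \ref{asdf3} for (3).
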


\begin{proof}
Since $\I$ is coanalytic, the pair $(\mathcal{I},\mathcal{J})$ is of some c-type (by Fact \ref{det2}). Moreover, $(\mathcal{\fin},\mathcal{\fin})_{n}\left(C\left(\R\right)\right)\varsubsetneq B_{n}\left(\R\right)\varsubsetneq  (\mathcal{\fin},\mathcal{\fin})_{n+1}\left(C\left(\R\right)\right)$ for all $1\leq n<\omega$. Therefore, in parts (1), (2) and (3) it suffices to prove only the implication from left to right.

{\bf Part (1): }This is Proposition \ref{a}.

{\bf Part (2): }The inclusion "$\supseteq $" follows from Proposition \ref{asdf} and the opposite one -- from Proposition \ref{asdf2}.

{\bf Part (3): }This is Proposition \ref{asdf3}.
\end{proof}

\begin{remark}
The implications from left to right in parts (1), (2) and (3) of Theorem \ref{b} remain true even if we drop the assumption that $\I$ is coanalytic.
\end{remark}

\begin{remark}
In parts (2) and (3) of Theorem \ref{b} the implications from left to right can be generalized to all $1\leq \alpha<\omega_{1}$. It follows from Propositions \ref{asdf3}, \ref{asdf2}, and \ref{asdf}. 
\end{remark}

Part (3) of the above theorem does not give an exact outcome, i.e., it does not say which class $(\mathcal{I},\mathcal{J})_{n}\left(C\left(X\right)\right)$ is equal to. The case of ideal convergence (not ideal equal convergence) suggests that the answer should depend on some combinatorial properties of the pair $(\I,\J)$ (cf. \cite{debs-ray}). Therefore, the following problem seems to be natural.

\begin{problem}
Characterize $(\mathcal{I},\mathcal{J})\left(C\left(X\right)\right)$ for $(\I,\J)$ of the third c-type. Is it always equal to one of the classes $(\mathcal{\fin},\mathcal{\fin})_{\gamma}\left(C\left(X\right)\right)$ or can it be equal to some $B_{\gamma}\left(X\right)$?
\end{problem}

\subsection*{Acknowledgements}
The first author was supported by the NCN (Polish National Science Centre) grant No. 2012/07/N/ST1/03205. Both authors were supported by the grant BW-538-5100-B858-15.

The authors would like to thank Tomasz Natkaniec for the idea for this article and for presenting them, on seminar in Gda{\'n}sk, the characterization of equal Baire classes generated by quasi-continuous functions in the classical case.


\begin{thebibliography}{HD}

\normalsize
\baselineskip=17pt

\bibitem{barb}
Barbarski P., Filip{\'o}w R., Mro\.{z}ek N., Szuca P.,
\emph{When the Kat{\v{e}}tov order implies that one ideal extends the other?},
Colloq. Math. {\bf 130}, No. 1 (2013), 91–-102. 

\bibitem{Borsik}
Borsik J., 
\emph{Algebraic structures generated by real quasicontinuous functions}, 
Tatra Mt. Math. Publ., {\bf 8}, (1996), 175--184.

\bibitem{csaszar-laczkovich-1979}
Cs{\'a}sz{\'a}r {\'A}., Laczkovich M.,
\emph{Some remarks on discrete {B}aire classes}, 
Acta Math. Acad. Sci. Hungar., {\bf 33}, No. 1-2 (1979), 51--70.

\bibitem{laczkovich-csaszar-equal-convergence-1975}
Cs{\'a}sz{\'a}r {\'A}., Laczkovich M., 
\emph{Discrete and equal Baire classes}, 
Acta Math. Hungar., {\bf 55}, No. 1-2 (1990), 165--178.

\bibitem{debs-ray}
Debs G., Saint Raymond J., 
\emph{Filter descriptive classes of Borel functions},
Fund. Math.\, {\bf 204}, No. 3 (2009), 189--213.

\bibitem{fil-stan}
Filip{\'o}w R., Staniszewski M., 
\emph{On ideal equal convergence},
Cent. Eur. J. Math., {\bf 12}, No. 6 (2014), 896--910.

\bibitem{fil-stan2}
Filip{\'o}w R., Staniszewski M., 
\emph{Pointwise versus equal (quasi-normal) convergence via ideals},
J. Math. Anal. Appl. {\bf 422}, No. 2 (2015), 995--1006.

\bibitem{filipow-szuca-I-Baire}
Filip{\'o}w R.,  Szuca P., 
\emph{Three kinds of convergence and the associated $\mathcal{I}$-Baire classes}, 
J. Math. Anal. Appl., {\bf 391}, No. 1 (2012), 1--9.

\bibitem{grande}
Grande Z.,
\emph{Sur la quasi-continuit{\'e} et la quasi-continuit{\'e} approximative},
Fund. Math., {\bf 129}, (1988), 167--172.

\bibitem{grande2}
Grande Z.,
\emph{On discrete limits of sequences of approximately continuous functions and $T_{ae}$-continuous functions}, 
Acta Math. Hungar., {\bf 92}, No. 1-2 (2001), 39--50.

\bibitem{kempisty}
Kempisty S., 
\emph{Sur les fonctions quasicontinues}, 
Fund. Math., {\bf 19}, (1932), 184--197.

\bibitem{KSW}
Kostyrko P., {\v{S}}al{\'a}t T., Wilczy{\'n}ski W.,
\emph{{$\mathcal{I}$}-convergence}, 
Real Anal. Exchange, {\bf 26}, No. 2 (2000/01), 669--685.

\bibitem{K}
Kwela A.,
\emph{A note on a new ideal},
J. Math. Anal. Appl., {\bf 430}, (2015), 932--949.

\bibitem{KS}
Kwela A., Sabok M.,
\emph{Topological representations},
J. Math. Anal. Appl., {\bf 422}, (2015), 1434--1446.

\bibitem{KZ}
Kwela A., Zakrzewski P.,
\emph{Combinatorics of ideals -- selectivity versus density},
submitted.

\bibitem{LR}
Laczkovich M., Rec{\l}aw I., 
\emph{Ideal limits of sequences of continuous functions},
Fund. Math.\, {\bf 203}, No. 1 (2009), 39--46.

\bibitem{laf}
Laflamme C.,
\emph{Filter games and combinatorial properties of strategies},
In: Set Theory (Boise, ID, 1992-1994), Amer. Math. Soc., Providence, RI, {\bf 192}, (1996), 51--67.

\bibitem{nat-szuca}
Natkaniec T., Szuca P.,
\emph{On the ideal convergence of sequences of quasi-continuous functions},
submitted.

\bibitem{nat-szuca2}
Natkaniec T., Szuca P.,
\emph{On the discrete ideal convergence of sequences of quasi-continuous functions},
submitted.

\bibitem{supina}
{\v{S}}upina J.,
\emph{Ideal QN-spaces},
submitted.

\end{thebibliography}
\end{document}